\newtheorem{theorem}{Theorem}
\newtheorem{lemma}[theorem]{Lemma}
\newtheorem{conj}{Conjecture}
\newtheorem{question}[conj]{Question}
\newtheorem{corollary}[theorem]{Corollary}
\newtheorem*{defn}{Definition}
\theoremstyle{remark}
\newtheorem*{remark}{Remark}
\def\a{\alpha}
\def\b{\beta}
\def\e{\varepsilon}
\DeclareMathOperator\var{Var}
\newcommand\rtot{R_{{\rm tot}}}
\def\eps{\varepsilon}
\def\E{\mathbb{E}}
\def\N{\mathbb{N}}
\def\R{\mathbb{R}}
\def\cB{\mathcal{B}}
\newcommand\Prb{\mathbb{P}}
\begin{document}

\title{Optimal Resistor Networks}
\author{J.~Robert Johnson\footnote{School of Mathematical Sciences, Queen Mary,
University of London, London E1 4NS, England. e-mail: \tt{r.johnson@qmul.ac.uk}} \and Mark Walters\footnote{School of Mathematical Sciences, Queen Mary,
University of London, London E1 4NS, England. e-mail: \tt{m.walters@qmul.ac.uk}}}
\date\today
\maketitle
\begin{abstract}

Given a graph on $n$ vertices with $m$ edges, each of unit resistance,
how small can the average resistance between pairs of vertices be?
There are two very plausible extremal constructions -- graphs like a
star, and graphs which are close to regular -- with the transition
between them occuring when the average degree is $3$. However, one of
our main aims in this paper is to show that there are significantly
better constructions for a range of average degree including average degree near~$3$.

  A key idea is to link this question to a analogous question about
  rooted graphs -- namely `which rooted graph minimises the average
  resistance to the root?'. The rooted case is much simpler to analyse
  that the unrooted, and one of the main results of this paper is that
  the two cases are asymptotically equivalent.
\end{abstract}

\section{Introduction}\label{s:intro}

In this paper we shall be considering resistor networks. For our
purposes a resistor network is a graph $G$ (possibly containing
multiple edges) where we view each edge as having unit
resistance. (Note, for the reader unfamiliar with the theory of
electrical networks, we give a brief summary of the facts we need in
Appendix~A; alternatively see Snell and Doyle~\cite{MR920811} or
Chapters~2 and~9 of Bollob\'as~\cite{MR1633290}.)  For convenience we
assume that $G$ has vertex set $V=[n]$ and edge set $E$, and for any such
graph $G$, we define the average resistance to be
\[
A(G)=\frac{1}{\binom{n}{2}}\sum_{\substack{x,y\in V\\ x<y}}R_{xy},
\]
where $R_{xy}$ denotes the effective resistance between $x$ and $y$ in
$G$.  We would like to know which graphs minimise this quantity for a
given number $n$ of vertices and $m$ of edges.  Trivially,
if $G$ is not connected then $A(G)=\infty$, so we shall always assume
$m\ge n-1$.

In the next section we describe two very plausible optimal graphs: one
is roughly `star-like' (i.e., contains a vertex of very high degree),
and the other is `regular-like'. It is easy to verify that for
$m<3n/2$ the star-like constructions are better than the regular-like
constructions, whereas the reverse is true when $m>3n/2$, with both
types of graph obtaining $A(G)=4/3$ when $m=3n/2$. Thus it seems
natural to expect that the optimal graph starts out star-like and
switches to regular (or regular-like) when $m=3n/2$.  However,
contrary to our expectations, this turns out not to be the case: there
is a signifcantly better construction for $m=3n/2$.
\begin{theorem}\label{t:alpha=3}
  For all sufficiently large $n$ there is a graph $G$ with $n$
  vertices and at most $3n/2$ edges and $A(G)<1.2908$.
\end{theorem}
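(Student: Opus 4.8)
The plan is to compute with the spectral form of the average resistance. Writing $0=\lambda_1<\lambda_2\le\cdots\le\lambda_n$ for the Laplacian eigenvalues of $G$ with orthonormal eigenvectors $\phi_k$, one has $R_{xy}=\sum_{k\ge2}\lambda_k^{-1}(\phi_k(x)-\phi_k(y))^2$, and since $\sum_{x<y}(\phi_k(x)-\phi_k(y))^2=n$ for every $k\ge2$, this collapses to
\[
A(G)=\frac{2}{n-1}\sum_{k\ge2}\frac{1}{\lambda_k}.
\]
Because $\sum_k\lambda_k=2m=3n$ is fixed, beating $4/3$ amounts to arranging a degree-$3$-on-average graph whose nonzero Laplacian spectrum has $\sum_k\lambda_k^{-1}$ slightly below $\tfrac23(n-1)$, i.e. with less mass pushed toward $0$ than in the two plausible families. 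Following the rooted viewpoint advertised in the abstract, I would also keep in mind the equivalent target: build $G$ by hanging many copies of a small rooted gadget on a tiny, richly connected core $K$ (say a clique on $n^{o(1)}$ vertices, which uses $o(n)$ edges and has internal resistance $o(1)$); then $R_{uv}=r(u)+r(v)+o(1)$ for $u,v$ in distinct gadgets, whence $A(G)=2\bar r+o(1)$, and it suffices to exhibit a density-$3$ rooted gadget with mean resistance to its root $\bar r<0.6454$.

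The first substantive step is to understand \emph{why} $4/3$ is the barrier, since this dictates what a winning construction must do. For random $3$-regular graphs the bulk spectrum obeys the Kesten--McKay law and $A(G)\to 2\,\E\!\left[(3-\mu)^{-1}\right]=\tfrac43$; the extreme adjacency eigenvalues near $2\sqrt2$ (Laplacian eigenvalues near $3-2\sqrt2$) are exactly what inflate $\sum\lambda_k^{-1}$, and since $\mathrm{tr}\,A^{3}\ge 0$ forces a triangle-free (high-girth) graph to be optimal among cubic graphs, no $3$-regular graph can do better. Dually, the obvious rooted gadgets -- a triangle on each pair of leaves through the root (equivalently star-plus-matching), leaves joined to a dense core by single edges, or a peer-expander of degree $3-2q$ grounded at rate $q$ -- are all pinned at exactly $\bar r=2/3$ (the last by the identity $\E_{\mathrm{KM}(3-2q)}[(3-q-\mu)^{-1}]\equiv\tfrac23$). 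So the construction must be genuinely inhomogeneous: neither regular nor a uniform tiling.

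To break the barrier I would therefore seek a construction with \emph{controlled} degree inhomogeneity, introducing a sublinear (or small constant-fraction) set of higher-degree vertices whose role is to raise the smallest Laplacian eigenvalues -- equivalently, a rooted gadget whose Dirichlet Laplacian $L_0$ (the root deleted/grounded) has spectrum concentrated near its mean rather than spread as in the pinned families, so that $\bar r=\tfrac1{|H|}\mathrm{tr}(L_0^{-1})=\tfrac1{|H|}\sum\nu_i^{-1}$ is driven below $2/3$ by convexity. Concretely I would fix a two-parameter family (sizes of a grounded layer and a secondary layer, together with the edge multiplicities joining them), write $A(G)$ — or $\bar r$ — as an explicit rational function of those parameters via series--parallel and star--mesh reductions (or directly from the characteristic polynomial of $L_0$), impose the edge constraint $m\le 3n/2$, and minimise. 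The final step is bookkeeping: check that rounding the number of gadget copies keeps the edge budget at $\le 3n/2$, that the core and the $O(n)$ within-gadget pairs contribute only $o(1)$, and conclude $A(G)=2\bar r+o(1)<1.2908$ for all large $n$.

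The main obstacle is precisely the design step: every natural family sits \emph{exactly} on the barrier (both random $3$-regular graphs and stars give $4/3$, as recorded above, and even broad interpolating families are pinned there by exact identities), so one must locate the structure that strictly improves on it, and the margin is only a few percent. This forces the supporting resistance computation to be carried out \emph{exactly} rather than estimated, since an $O(1/n)$ or modelling error could swallow the whole improvement; getting a closed form for $A(G)$ of the chosen inhomogeneous graph, and certifying that its optimum clears $0.6454$ (rooted) / $1.2908$ (unrooted), is where the real work lies. A secondary difficulty is ensuring the higher-degree part does not create its own small Laplacian eigenvalues that would cancel the gain.
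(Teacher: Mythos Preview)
Your proposal correctly diagnoses the problem and the target --- one must produce a rooted graph of average degree $3$ with mean root-resistance $\bar r<0.6454$, and you rightly observe that all the obvious families (random $3$-regular, star of triangles, grounded expanders) are pinned exactly at $\bar r=2/3$. But the proposal stops precisely where the proof has to start: you never actually exhibit a construction beating the barrier. Saying you would ``fix a two-parameter family\dots and minimise'' is not a proof, and you yourself flag that ``the design step\dots is where the real work lies''. The gap is not a technicality; it is the entire content of the theorem.

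There is also a structural mismatch with what the paper actually does, and it is worth seeing why your framing may not get there. You are searching for a \emph{small} rooted gadget at density exactly $3$, to be analysed by series--parallel or star--mesh reduction. The paper instead works at average degree $10/3$, not $3$: it takes a high-girth $4$-regular bipartite graph, splits each vertex on one side into an adjacent pair of degree-$3$ vertices, and bounds $B$ via a ``local resistance'' theorem (the resistance of each vertex to the root in a random $p$-rooting is at most $(1+\eps)$ times the resistance in the local tree to depth $\ell$). The tree resistances are computed exactly --- they come to $\tfrac{-3+7\sqrt5}{24}\approx 0.5272$ on average --- giving $b(10/3)\le 0.5272$. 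The bound at $\alpha=3$ then follows by \emph{convexity} of $b$, interpolating linearly between the star at $(2,1)$ and this point at $(10/3,0.5272)$, which yields $b(3)\le 0.6454$. Two ideas here are absent from your plan: (i) the winning construction lives at a higher density and is pulled back to $\alpha=3$ by convexity with leaves, and (ii) it is not a finite gadget at all but a sequence of graphs whose girth tends to infinity, analysed via local trees --- a regime your series--parallel/two-layer framework is not set up to capture.
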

\noindent%
Describing this construction and proving an analogue of
Theorem~\ref{t:alpha=3} for a range of $m$ is one of the main aims of
this paper.

One key idea of the proof is to consider a related model where we 
insist that all current go `via' a special designated `root' vertex. More
precisely, we consider graphs with a root $\rho$ and let $B(G)$ be the
average resistance of a vertex to the root.

It is simple to prove (see Lemma~\ref{l:triangle-inequality} in Appendix~A) that
$R_{xy}\le R_{x\rho}+R_{\rho y}$ and, hence, that $A(G)\le 2B(G)$.
It follows that graphs giving upper bounds on $B(G)$ automatically give upper bounds
for $A(G)$. Our second main result shows that, by slightly modifying
the graph and allowing a small error term, we can obtain the reverse
inequality.
\begin{theorem}\label{root-unroot:t}
  Suppose that $G$ is a graph and that $s\in \N$ is given. Then there
  is a graph $G'$ formed by adding a root vertex to $G$ joined to $G$
  by $s$ edges with
  \[
  B(G')\le \frac12 A(G) + \frac1{2s} A(G)+\frac{1}{s}.
  \]
\end{theorem}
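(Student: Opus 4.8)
The plan is to use the probabilistic method: form $G'$ by adjoining a root $\rho$ joined by single edges to $s$ vertices $v_1,\dots,v_s$ chosen independently and uniformly at random from $V$ (allowing repeats, so that $G'$ really does have $s$ root-edges), and to show that $\E[B(G')]$ is at most the claimed bound; some choice of the $v_i$ then realises it. Writing $B(G')=\frac1n\sum_{x\in V}R'_{x\rho}$, where $R'$ denotes resistance in $G'$, it suffices to bound $\E[R'_{x\rho}]$ for each fixed $x$ and then average over $x$.

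For the per-vertex bound I would use Thomson's principle: any unit flow from $x$ to $\rho$ in $G'$ has energy at least $R'_{x\rho}$. Let $\theta^{x\to v}$ denote the harmonic (energy-minimising) unit flow from $x$ to $v$ in $G$, so that its energy $\mathcal E(\theta^{x\to v})$ equals $R_{xv}$, and let $\theta_i$ be $\theta^{x\to v_i}$ followed by pushing the unit current across the edge $v_i\rho$; each $\theta_i$ is then a unit $x$--$\rho$ flow. Averaging, $\theta=\frac1s\sum_i\theta_i$ is again a unit $x$--$\rho$ flow, so $R'_{x\rho}\le\mathcal E(\theta)$. The $s$ root-edges each carry current $1/s$, contributing $s\cdot(1/s)^2=1/s$ to the energy, while the contribution from the edges of $G$ is $\frac1{s^2}\sum_{e\in E}\bigl(\sum_i\theta_i(e)\bigr)^2$, which expands into diagonal terms $\sum_i\mathcal E(\theta^{x\to v_i})$ and cross terms $\sum_{i\ne j}\langle\theta^{x\to v_i},\theta^{x\to v_j}\rangle$ in the energy inner product.

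The crucial point is that bounding the cross terms by Cauchy--Schwarz (replacing each inner product by the corresponding energies) merely recovers a bound of the form $\E[B(G')]\le A(G)+1/s$, losing the factor $\tfrac12$; instead I would evaluate the cross terms exactly in expectation. Using the discrete Green identity together with the standard potential formula for a unit $x$--$w$ current, one obtains the identity
\[
\langle\theta^{x\to v},\theta^{x\to w}\rangle=\tfrac12\bigl(R_{xv}+R_{xw}-R_{vw}\bigr).
\]
Taking $v,w$ independent and uniform and writing $\bar R_x=\frac1n\sum_yR_{xy}$ and $\bar R=\frac1{n^2}\sum_{v,w}R_{vw}=\frac{n-1}{n}A(G)$, the cross term has expectation $\bar R_x-\tfrac12\bar R$; the $-R_{vw}$ term is exactly what supplies the saving. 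Assembling gives $\E[\mathcal E(\theta)]=\frac1s+\frac1s\bar R_x+\frac{s-1}{s}\bigl(\bar R_x-\tfrac12\bar R\bigr)$, and averaging over $x$ (so that $\bar R_x$ becomes $\bar R$) collapses the coefficient of $\bar R$ to $\frac1s+\frac{s-1}{2s}=\frac12+\frac1{2s}$. Hence $\E[B(G')]\le\frac1s+\bigl(\frac12+\frac1{2s}\bigr)\bar R\le\frac12A(G)+\frac1{2s}A(G)+\frac1s$, and the probabilistic method finishes the proof.

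The main obstacle is pinning down the constant: a naive flow bound is off by a factor of two, and the factor $\tfrac12$ emerges only from the exact second-moment computation via the inner-product identity above. Thus the real work lies in establishing that identity cleanly and in carefully tracking the $\bar R_x\mapsto\bar R$ averaging, rather than in any single delicate estimate.
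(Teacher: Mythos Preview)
Your proposal is correct and follows the same strategy as the paper: choose the $s$ sinks uniformly at random with replacement, bound each $R'_{x\rho}$ via Thomson's principle using the averaged harmonic flows, compute the expected energy, average over $x$, and finish with the probabilistic method. The only difference is bookkeeping in the second-moment step: the paper fixes a reference vertex $v$, writes every harmonic flow as $I_i-I_j$, and packages the calculation as $A'(G)=2\sum_e\var(X(e))$ together with a variance decomposition of $X(e)-\tfrac1s\sum_{j\in S}I_j(e)$, whereas you invoke the polarization identity $\langle\theta^{x\to v},\theta^{x\to w}\rangle=\tfrac12(R_{xv}+R_{xw}-R_{vw})$ directly; these are algebraically the same computation, and in particular the factor $\tfrac12$ emerges in the paper from $\var(X-X')=2\var(X)$ exactly where it emerges for you from the $-R_{vw}$ term.
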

\noindent%
The star shows that any optimal graph has $A(G)\le 2$, so
Theorem~\ref{root-unroot:t} shows that, for any such graph $G$, there
is a graph $G'$ such that $B(G')\le A(G)/2+2/s$.  Letting $s$ tend to
infinity with $n$ shows that the two models are equivalent
asymptotically.  Thus, it suffices to prove all our bounds for the
rooted model. This is useful as the rooted model is often much simpler
to analyse.

\subsubsection*{Motivation}

In addition to being mathematically natural, the question of
minimising $A(G)$ arises in
statistics~\cite{MR2409757,MR1105858}. Indeed, suppose that $G$ is the
concurrence graph of an experimental design. Then $R_{xy}$ is
proportional to the variance of the best linear unbiased estimator for
the difference between treatments $x$ and $y$. It follows that the
graphs for which $A(G)$ is minimised correspond to designs for which
the average of these variances is as small as possible. See the
surveys \cite{MR2588537,MR3026766} for more details. We note that
whilst our primary interest in the rooted model is to allow us to
prove results about the general model, it does, in fact, have a
natural interpretation in this application. A statistician would call
the rooted model a `design with a control.'

The parameter $A(G)$ has also been studied in the context of
mathematical chemistry~\cite{QUA:QUA1} and network
robustness~\cite{MR2811133}. In these areas it is sometimes referred
to as the Kirchhoff Index of a graph.

Finally, resistance plays an important role in the theory of random
walks on graphs. This connection was first shown in Snell and
Doyle~\cite{MR920811}, and has been further developed by many authors
including Chandra et al.~\cite{MR1613611}, Coppersmith, Feige and
Shearer~\cite{MR1386885}, and Tetali~\cite{MR1088395}.

\subsubsection*{Layout of the Paper}
Before discussing the main layout of the paper we note that, for the
reader unfamiliar with the theory of electrical networks, we include
all the definitions and results we need in Appendix~A.

We start by formalising both the unrooted and rooted models and
introduce some notation. Then in Section~\ref{s:easy-facts} we use
some basic ideas from electrical network theory to give some simple
general bounds. These bounds justify our choice of $m$ linear in $n$
as being the most interesting case of the problem.

In Section~\ref{s:basic} we start the main paper by proving some
elementary results about the rooted average resistance and, in
Section~\ref{section:equivalence}, we prove that the rooted and unrooted
models have the same asymptotic behaviour.

Having concluded the necessary preparation we prove our upper bounds
including Theorem~\ref{t:alpha=3} and a proof of a bound given by a
random regular graph heuristic in Section~\ref{s:upper}; and improve
our lower bounds in Section~\ref{s:lower}.

In Section~\ref{s:optimal-graphs} we show how the stucture changes as
$m$ increases -- in broad terms, from graphs which are like the star
to graphs which are closer to a regular graph.  In
Section~\ref{s:queen-bee} we discuss the rooted model with the extra
requirement that the root must be joined to all vertices. This model
is sometimes called the \emph{Queen-Bee} model by statisticians (see
e.g.,~\cite{MR3026766}). We show that, for any $n\le m\le 3n/2$, the
best Queen-Bee network is exactly a star of triangles and
leaves. However, we will see that no Queen-Bee network is even
asymptotically optimal unless $m=(1+o(1))n$.

We conclude with some open problems.

\section{Definitions and
  Notation}\label{s:defns} Recall from the introduction that we define
\[
A(G)=\frac{1}{\binom{n}{2}}\sum_{\substack{x,y\in V\\ x<y}}R_{xy},
\]
to be the average resistance between pairs of vertices. However, in
most of our calculations it turns out to be easier to consider the
average over all pairs, rather than all distinct pairs. Hence, we define
\[
A'(G)=\frac{1}{n^2}\sum_{x,y\in V(G)}R_{xy}. 
\]
Since $R_{xx}=0$ for all $x$ we have
\[A(G)=\frac{n^2}{n(n-1)}A'(G)=\frac{n}{n-1}A'(G)
\]
so, in particular, the asymptotic behaviour is the same for $A$ and
$A'$.

For the rooted model we define
\[
B(G)=\frac{1}{|V(G)|-1}\sum_{x\in V(G)}R_{\rho x}
\]
to be the average resistance to the root vertex $\rho$.

We are interested in minimising these two quantities for a given size
of graph. As we shall see in the next section, it is simpler to
parameterise by the average degree of the graph rather than the number
of edges. Thus define
\[
a_n(\a)=\min\{A'(G) : |V(G)|=n,\ e(G)\le \frac{\alpha n}{2}\}
\]
and 
\[
b_n(\a)=\min\{B(G) : |V(G)|=n+1,\ e(G)\le \frac{\alpha n}{2}\}.
\]
Requiring $|V(G)|=n+1$ in the second definition makes the calculations
a little cleaner: it means that $n$ is the number of non-root
vertices. With a slight abuse of terminology we will refer to $\alpha$
as the average degree in both cases.

As most of our work is with the rooted model it is convenient
to define some extra notation. The
\emph{total effective resistance} $\rtot=\sum_{x\in V(G)}R_{\rho
  x}$. In some cases we will want to refer to the graphs occuring in
the definition of $b_n(\a)$. Thus we define
\[
\cB(n,\a)=\{G : |V(G)|=n+1,\ e(G)\le \frac{\alpha n}{2}\}.
\]

Many of our results will be asymptotic and we define
\[a(\a)=\lim_{n\to\infty}a_n(\a)\qquad\text{and}\qquad
b(\a)=\lim_{n\to\infty}b_n(\a).\] We prove that these limits exist in
the next two sections.

\section{Background and Elementary Remarks}\label{s:easy-facts}
In this section we discuss some easy cases of our problem, and give
some simple general bounds. The results in this section are not
original (see~\cite{MR1105858} or~\cite{MR2409757} for
example) and are given to place our main result
(Theorem~\ref{t:alpha=3}) into context.

As we remarked in the introduction, if $G$ is not connected then
$A(G)$ is infinite.  Thus, the first non-trivial case is $m=n-1$ and
then any $G$ minimising $A(G)$ must be a tree. Since a tree contains a
unique path between any two points, the resistance between any two
vertices is just the distance between them. This implies that a star
is the unique (up to isomorphism) graph minimising $A(G)$.  In this
case, the average resistance is $2-2/n$.

In the next case $n=m$ the situation is a little more complicated. As
above $G$ must be connected so it must be unicyclic. It is easy to
check that the graph minimising $A(G)$ must consist of a cycle with
(possibly) some leaves attached to one vertex of the cycle. Then it is a simple 
calculation to find the optimal length for the cycle: it turns out
that for $n\le 13$ a 4-cycle is optimal whereas for $n\ge 13$ a
3-cycle is optimal (they are both optimal for $n=13$); see
\cite{MR2409757} for details.

The fact that the optimal graph switches as $n$ increases leads us to
our first choice: in this paper we will almost always be looking at
the behaviour for `large $n$'. Thus, we suppose that $n$ is large, and
that $G$ has average degree $\alpha=\alpha(n)$ so $m=n
\alpha/2$. Although we have allowed $\alpha$ to vary with $n$, as we
shall see shortly, the case where $\alpha$ is constant is the most
interesting.

If $\alpha$ is an even integer then one possible graph is a star where
each vertex is joined to the centre of the star by $\alpha/2$ edges.
This gives a graph with average degree approximately $\alpha$ and
average resistance approximately $4/\alpha$.

What about lower bounds?  The first trivial bound we might consider is
that, for any $x\not=y$, $R_{xy}\ge 1/d_x$ where $d_x$ denotes the
degree of $x$. Thus, by convexity,
\begin{equation}\label{trivial-average:e}
A(G)\ge \frac1{n}\sum_{x\in V} \frac{1}{d_x}\ge \frac{1}{(\frac 1n\sum_xd_x)}=\frac{1}{\alpha},
\end{equation}
i.e., $A(G)$ is at least the reciprocal of the average
degree. Combining this lower bound with the upper bound given by the
multi-edge-star construction above we see that, as the average degree $\alpha$ tends to
infinity, the average resistance for an extremal graph is $\Theta(1/\alpha)$.

These upper and lower bounds are approximately a factor of four apart;
it is easy, however, to improve the lower bound in
(\ref{trivial-average:e}). For all pairs $x$, $y$ which are not
neighbours
\[R_{xy}\ge \frac{1}{d_x}+\frac{1}{d_y}.
\]
Thus, since $d_x\ge 1$ for all $x$,
\begin{align*}
A(G)&\ge \frac1{n(n-1)}\sum_{\substack{x,y\\x\not=y\\y \in V\setminus \Gamma(x)}} \left(\frac{1}{d_x}+\frac{1}{d_y}\right)\\
&\ge \frac1{n(n-1)}\sum_{\substack{x,y\\x\not=y}}\left( \frac{1}{d_x}+\frac{1}{d_y}\right) -\frac{4m}{n(n-1)}\\
&\ge \frac{2}{(\frac 1n\sum_xd_x)}-O\left(\frac{m}{n(n-1)}\right)\\
&= \frac{2}{\alpha}-O\left(\frac{\alpha}{n}\right).
\end{align*}
In other words, provided the average degree $\alpha$ is $o(n)$ the
average resistance cannot be (much) less than $2/\alpha$.

Another approach for a lower bound is to prove a lower bound for the
`continuous' version of the model where non-integral resistances are
allowed: that is $G$ is a complete graph with arbitrary resistances on
the edges. The natural analogue of the bound on the number of edges is
to bound the total \emph{conductance}, that is $\sum_{uv\in E(G)}1/r_{uv}\le m$,
where $r_{uv}$ denotes the resistance of the edge $uv$.

It is well known that (see Appendix~A), given two networks
$G_1$ and $G_2$ with total conductance at most $m$, the network
$H=\frac12(G_1+G_2)$ (i.e. with the conductance of an edge the average
of the conductance in the two networks) has $A(H)\le
\frac12\left(A(G_1)+A(G_2)\right)$. This implies that the network with
all conductances equal is an optimal network. In this network each edge has
conductance $m/\binom n2$, and it easy to check that average
resistance is $(n-1)/m$, which is approximately $2/\alpha$.

  Having found some simple lower bounds let us give some more
  constructions; we concentrate on the case of small $\alpha$. The
  above averaging argument indicates that, in a sense, the star
  construction is the worst possible: we have `averaged' as little as
  possible. Thus let us look at the opposite extreme: a very uniform
  graph. Suppose that $G$ is a typical $\alpha$-regular graph. If we
  assume that the graph locally looks like a tree then we might expect
  (as a heuristic) the resistance between two points to be twice the
  resistance of a vertex to infinity along an $\alpha$-regular
  tree. By identifying vertices at the same depth in the tree (since they will all
  have the same potential in the flow from the vertex to infinity) it
  is easy to see that the resistance in an $\alpha$-regular tree is
\[
\sum_{k=0}^\infty\frac1{\alpha(\alpha-1)^k}=\frac{(\alpha-1)}{\alpha(\alpha-2)},
\]
which suggests an average resistance between vertices of
$\frac{2(\alpha-1)}{\alpha(\alpha-2)}$.  

If we compare this with the lower bound of $2/\alpha$ we proved above,
we see that if $\alpha\to \infty$ then our upper and lower bounds are
both $\sim 2/\alpha$. (We note, though, that the upper bound as we
have given it here is just a heuristic; we discuss this more in
Section~\ref{s:upper}.) However, for $\alpha$ a fixed constant the
upper and lower bounds do differ significantly.

Let us compare this new upper bound with the upper bound from the
`star construction'. We see that the regular construction is better
than the star construction for $\alpha \ge 4$. For $\alpha=3$ this
heuristic gives average resistance $4/3$ which is the same as the
bound for the star when $\alpha=3$. The bound for the star was only
valid for $\alpha$ an even integer; however there is another star-like
construction: a star of triangles. In this construction one vertex is
joined to all others, and a matching is added between all the 
non-centre vertices (if $n$ is even there is one vertex left over,
which we can ignore as we are looking at the asymptotics). This does
have average resistance asymptotically $4/3$. Thus, for $\alpha=3$ the
`star-like' construction and the random regular heuristic give the
same average resistance.

Finally, we note that the observations we have given so far naturally
motivate the rooted model we defined in the introduction.  Indeed, let
us consider the case $m=n$ again: we saw above that, depending on $n$,
the optimal graph was either a 4-cycle with leaves, or a 3-cycle with
leaves. The reason that the optimal graph switches is that, for small
$n$, the resistances between pairs of vertices in the cycle form a
significant part of the average, whereas for large $n$ the average is
dominated by leaf vertex to leaf vertex resistance (always 2) followed
by the contribution from leaf vertices to cycle vertices.  The rooted
model naturally avoids this change in behaviour as it insists that all
current go to the root vertex; so in the example above the optimal
rooted graph is always a 3-cycle with leaves.

\section{Basic Properties of the Rooted Model}\label{s:basic}
In this section we work entirely with the rooted model; in the next
section we show that the rooted model and the unrooted model behave
the same asymptotically, so most of the results proved in this section
also apply to the unrooted model. It is worth remarking, however, that
we do not have direct proofs of most of the analogues of these results
for the unrooted model: the only proof we know is via the results of
this section and the equivalence between the two models given in the
next section. 

We first show that the limit in the definition of the function $b$
exists.

\begin{lemma}\label{limit:lem}
For any $\alpha\ge 2$, the limit $b(\a)=\lim_{n\to\infty}b_n(\a)$ exists and is equal to $\inf_n b_n(\a)$.
\end{lemma}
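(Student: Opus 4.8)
The plan is to prove that the total resistance of optimal rooted graphs is subadditive and then to invoke Fekete's Lemma. First I would observe that for $\alpha\ge 2$ the class $\cB(n,\a)$ contains a connected graph (for instance a spanning tree to the root uses $n$ edges, and $\alpha n/2\ge n$ leaves room for this), so that some graph has finite $B$; and since there are only finitely many multigraphs on $n+1$ vertices with at most $\alpha n/2$ edges, the minimum defining $b_n(\a)$ is genuinely attained. It is convenient to write $f(n)=n\,b_n(\a)$, which by the definition of $B$ is exactly the value of $\rtot$ for an optimal graph $G\in\cB(n,\a)$.

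The key construction is to glue optimal graphs at the root. Given optimal graphs $G_1\in\cB(n_1,\a)$ and $G_2\in\cB(n_2,\a)$, I would form $G$ by identifying their roots into a single root $\rho$ while keeping all other vertices distinct. Then $G$ has $n_1+n_2$ non-root vertices and $e(G)=e(G_1)+e(G_2)\le \a(n_1+n_2)/2$, so $G\in\cB(n_1+n_2,\a)$. The crucial electrical fact to establish is that $\rtot(G)=\rtot(G_1)+\rtot(G_2)$: for a vertex $x$ lying in $G_1$, driving a unit current from $\rho$ to $x$ induces no current in the part $G_2$, since extending the $G_1$-potential by the constant value (the potential at $\rho$) over all of $G_2$ yields a valid harmonic function on $G$ (the current balance at $\rho$ is unchanged and $G_2$ sits at constant potential). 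Hence $R_{\rho x}$ in $G$ equals $R_{\rho x}$ in $G_1$, and summing over all non-root vertices gives the additivity of $\rtot$.

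Combining these, $f(n_1+n_2)\le \rtot(G)=\rtot(G_1)+\rtot(G_2)=f(n_1)+f(n_2)$, so $f$ is subadditive and bounded below by $0$. Fekete's Lemma then gives that $\lim_{n\to\infty}f(n)/n$ exists and equals $\inf_n f(n)/n$; since $f(n)/n=b_n(\a)$, this is precisely the claim. I expect the main obstacle to be the careful justification of the additivity of $\rtot$ under gluing at the root, in particular the assertion that the dangling component $G_2$ carries no net current; this should follow cleanly from the harmonic-extension argument above, or equivalently from the series and parallel laws and the properties of effective resistance collected in Appendix~A.
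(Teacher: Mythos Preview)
Your argument is correct, and the core construction---gluing rooted graphs at a common root so that the total resistance to the root is additive---is exactly the mechanism the paper uses. The difference is only in how you extract the limit: the paper takes a near-optimal $G\in\cB(n,\a)$, writes an arbitrary $N$ as $kn+l$, and builds an explicit graph in $\cB(N,\a)$ from $k$ glued copies of $G$ together with $l$ leaves at the root, then computes $B$ directly to show $\limsup_N b_N(\a)\le\inf_n b_n(\a)+2\e$. Your packaging via subadditivity of $f(n)=n\,b_n(\a)$ and Fekete's Lemma is slightly cleaner, since Fekete absorbs the divisibility bookkeeping that the paper handles with the extra leaves; on the other hand, the paper's version does not need to argue that the minimum in the definition of $b_n(\a)$ is actually attained. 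Either way the electrical input is the same cut-vertex fact you state: when $\rho$ separates $G_1$ from $G_2$, the unit current from $x\in G_1$ to $\rho$ is supported on $G_1$, so $R_{x\rho}$ is unchanged by the gluing.
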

\begin{proof}
  Given $\a\geq2$ let $b=\inf_nb_n(\a)$. Given $\e>0$ choose $n$ so
  that $b_n(\a)<b+\e$ and let $G\in\cB(n,\a)$ be a rooted
  graph with $B(G)=b_n(\a)$. 

  Any $N$ can be written as $N=kn+l$ with $0\leq l<n$. Consider the
  graph~$H$ on~$N+1$ vertices comprising $k$ copies of $G$ with a
  common root vertex, but otherwise disjoint, together with $l$ leaves
  adjacent to the root. This graph has
  $e(H)=ke(G)+l\leq\frac{k\a n}{2}+l\leq\frac{\a N}{2}$ and so
  $H\in\cB(N,\a)$.  The average rooted resistance is
  $B(H)=\frac{l}{N}+\left(1-\frac{l}{N}\right)B(G)<\frac{n}{N}+b+\e$. Therefore,
  for any $N>\frac{n}{\e}$ we have $B(H)<b+2\e$. Thus,
  $\limsup_{N\to\infty} b_N(\a)<b+2\e$ and the result follows.
\end{proof}

\begin{lemma}\label{convex:lem}
The function $b$ is convex and continuous on $[2,\infty)$.
\end{lemma}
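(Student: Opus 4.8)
The plan is to prove convexity by exhibiting, for any two average degrees $\a_1,\a_2$ and any convex combination $\a=\lambda\a_1+(1-\lambda)\a_2$, a construction whose rooted average resistance is at most $\lambda b(\a_1)+(1-\lambda)b(\a_2)$. The natural idea, mirroring the disjoint-copies trick of Lemma~\ref{limit:lem}, is a \emph{parallel} (vertex-disjoint except at the root) construction. Take near-optimal rooted graphs $G_1\in\cB(n_1,\a_1)$ and $G_2\in\cB(n_2,\a_2)$, and glue them at a common root. If we take $p$ copies of $G_1$ and $q$ copies of $G_2$ sharing one root, the new graph has $pn_1+qn_2$ non-root vertices and $pe(G_1)+qe(G_2)$ edges, and its total resistance to the root is additive: $\rtot(H)=p\,\rtot(G_1)+q\,\rtot(G_2)$. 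Dividing through, $B(H)$ is exactly the weighted average of $B(G_1)$ and $B(G_2)$ with weights proportional to $pn_1$ and $qn_2$, while the average degree of $H$ is the weighted average of $\a_1$ and $\a_2$ with those same weights. Hence by choosing the integers $p,q$ so that $pn_1/(pn_1+qn_2)\approx\lambda$, we realise (approximately) the point $(\a,\lambda b(\a_1)+(1-\lambda)b(\a_2))$, which gives $b(\a)\le\lambda b(\a_1)+(1-\lambda)b(\a_2)$ up to an error that vanishes as the sizes grow. Passing to the limit using Lemma~\ref{limit:lem} (so that $b_{n_i}(\a_i)\to b(\a_i)$) then yields exact convexity.

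The key steps, in order: first, record the additivity of $\rtot$ over parallel-at-the-root components (immediate from the fact that current from distinct components does not interact, a standard series/parallel fact I would cite from Appendix~A); second, set up the weighted-average identities for both $B$ and $\a$ under the $p$-copies-of-$G_1$, $q$-copies-of-$G_2$ construction; third, choose $p,q$ (and let $n_1,n_2\to\infty$) so the weight $pn_1/(pn_1+qn_2)$ converges to the desired $\lambda$ while keeping the edge budget $e(H)\le\a(pn_1+qn_2)/2$ satisfied; and fourth, take limits to get convexity on the interior of $[2,\infty)$.

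For continuity, convexity does most of the work: a convex function on an interval is automatically continuous on the \emph{interior}, so the only genuine issue is continuity (from the right) at the left endpoint $\a=2$. Here I would argue directly. Since $b$ is monotone non-increasing (more edges can only help: adding edges to an optimal graph for $\a_1<\a_2$ shows $b(\a_2)\le b(\a_1)$, or one simply notes $\cB(n,\a_1)\subseteq\cB(n,\a_2)$) and convex, the right-hand limit $\lim_{\a\downarrow2}b(\a)$ exists; I must show it equals $b(2)$. One clean way is to bound $b(2)-b(\a)$ for $\a$ slightly above $2$ by taking an optimal rooted graph witnessing $b(2)$ and adding a small number of extra edges to push the average degree up to $\a$; each added edge can only decrease resistance, and adding $o(n)$ edges changes $B$ by $o(1)$, so $b(\a)\ge b(2)-o(1)$ as $\a\downarrow2$, giving continuity at the endpoint.

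The main obstacle I anticipate is not the convexity inequality itself but the bookkeeping of \emph{integrality and edge budgets}: the weights $pn_1/(pn_1+qn_2)$ are rational and the edge counts must respect $e(H)\le\a(pn_1+qn_2)/2$ exactly, so I would need to verify that the roundoff in choosing $p,q$ contributes only an $o(1)$ error, which is where Lemma~\ref{limit:lem} (allowing us to pad with leaves and to take $n_1,n_2$ as large as we like) is essential. A secondary subtlety is confirming monotonicity of $b$ cleanly enough to pin down the endpoint behaviour; this should follow from the containment of the feasible families $\cB(n,\a_1)\subseteq\cB(n,\a_2)$ for $\a_1\le\a_2$, but I would state it explicitly as a preliminary observation before invoking it.
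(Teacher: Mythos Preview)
Your convexity argument is essentially the same as the paper's: glue copies of near-optimal graphs for the two endpoints at a common root, note that both the edge count and $\rtot$ are additive over the root-disjoint pieces, and let the proportions approximate the desired convex coefficient. The paper streamlines the bookkeeping by taking the \emph{same} $n$ for both pieces and using $\lceil tk\rceil$ copies of one and $\lfloor(1-t)k\rfloor$ of the other, which makes the verification $e(H)\le\gamma N/2$ a one-line inequality; your version with different $n_1,n_2$ works too but needs the extra remark that one can always round so that the realised weight $\lambda'$ satisfies $\lambda'\ge\lambda$ (equivalently $\a'\le\a$), so that $H$ actually lies in $\cB(N,\a)$.

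The genuine gap is in your continuity argument at the endpoint $\a=2$. You correctly identify that convexity gives continuity on $(2,\infty)$ and that the remaining task is to show $\lim_{\a\downarrow2}b(\a)=b(2)=1$; since $b$ is non-increasing this amounts to a \emph{lower} bound $b(\a)\ge1-o(1)$. But your proposed argument goes the wrong way. Starting from an optimal graph $G$ for $b(2)$ and adding $o(n)$ edges produces \emph{one particular} graph $G'\in\cB(n,\a)$, and even if you could show $B(G')\ge B(G)-o(1)$, that is a statement about $G'$ only. It yields $b_n(\a)\le B(G')$ (an upper bound, which you already have from monotonicity), not $b_n(\a)\ge B(G')$. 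To bound $b(\a)$ from below you must bound $B(H)$ from below for an \emph{arbitrary} $H\in\cB(n,\a)$, and your construction says nothing about such $H$. Going the other direction (start from an optimal $H$ for $\a$ and delete edges down to average degree $2$) does not help either, since deleting edges can increase resistances arbitrarily or even disconnect the graph.

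The paper closes this gap by forward-referencing the two-step lower bound of Theorem~\ref{t:2-step-lower}, namely $b(\a)\ge1/(\a-1)$, which is continuous in $\a$ and equals $1$ at $\a=2$; combined with $b(\a)\le b(2)=1$ this forces $\lim_{\a\downarrow2}b(\a)=1$. Some genuine lower bound of this kind is needed here: convexity plus monotonicity alone does not exclude a downward jump at the left endpoint.
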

\begin{proof}
Suppose that $2\leq\a\leq\b$, that $t\in[0,1]$ and that
$\gamma=t\a+(1-t)\b$. We must show that $b(\gamma)\leq
tb(\a)+(1-t)b(\b)$.

Given $\e>0$ take $n$ large enough that $b_n(\a)<b(\a)+\e$ and
$b_n(\b)<b(\b)+\e$. Let $G_1\in\cB(n,\a),G_2\in\cB(n,\b)$ be rooted graphs with
$B(G_1)=b_n(\a)$ and $B(G_2)=b_n(\b)$.

For any integer $k$ let $N_k=kn$ and let $H$ be the graph comprising
$\lceil tk\rceil$ copies of $G_1$ and $\lfloor(1-t)k\rfloor$ copies of
$G_2$ with a common root vertex but otherwise disjoint. The graph $H$
has $(\lceil tk\rceil+\lfloor(1-t)k\rfloor)n+1=N_k+1$ vertices and
$e(H)\leq\frac{\gamma N_k}{2}$; i.e., $H\in \cB(N_k,\gamma)$. Also
\begin{align*}
B(H)&=\frac{\lceil
  tk\rceil}{k}B(G_1)+\frac{\lfloor(1-t)k\rfloor}{k}B(G_2)\\ 
&<(t+\e)(b(\a)+\e)+(1-t)(b(\b)+\e)
\end{align*}
provided that $k$ is large. This expression tends to
$tb(\a)+(1-t)b(\b)$ as $\e\rightarrow0$ and so $b(\gamma)\leq
tb(\a)+(1-t)b(\b)$ as required.

This shows that $b$ is convex on $[2,\infty)$ which implies it is
continuous on $(2,\infty)$. To prove that $b$ is continuous at $2$ we
need to use the lower bound of $1/(\a-1)$ for $b(\a)$ that we prove
later in Theorem~\ref{t:2-step-lower}. Since this bound is continuous
and takes the value $1$ when $\a=2$ this completes the proof.
\end{proof}
\begin{remark}
  This result is only true in the limit: that is $b$ is convex but,
  for fixed $n$ the function $b_n$ is not. Indeed, we have seen that
  $b_n(n-1)=1$ and $b_n(n)=1-\frac{2}{3n}$. On the other hand, by
  Theorem~\ref{t:alpha=3} combined with the equivalence between the
  rooted and unrooted models (Theorem~\ref{root=unroot:t}, see later),
  we see that $b_n(\frac{3n}{2})\le 0.65<\frac23$ for all sufficently large
  $n$.
\end{remark}

Although we are interested in minimising the average resistance, it
will sometimes be useful to have a bound on the maximum resistance in
an optimal graph. The following simple lemma provides this.

\begin{lemma}\label{contract:lem}
  If $G\in\cB(n,\a)$ and $R_{x\rho}>1$ for some $x\in V(G)$ then there is a
  graph $H\in\cB(n,\a)$ with $B(H)<B(G)$. 
\end{lemma}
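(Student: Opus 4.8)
The plan is to exploit the fact that a vertex $x$ with $R_{x\rho}>1$ is more expensive than a leaf hung directly off the root, which contributes resistance exactly~$1$. The idea is to delete $x$ by contracting one of its incident edges --- which, being a shorting operation, can only decrease resistances to the root --- and then to restore the vertex count by attaching a fresh leaf to $\rho$. Since contracting away $x$ removes from $\rtot$ a term larger than~$1$, while the new leaf adds back only~$1$, the total resistance strictly drops.

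In more detail, I would first note that $x$ cannot be adjacent to $\rho$: any neighbour of $\rho$ has resistance at most~$1$ to the root, so $R_{x\rho}>1$ forces $x\not\sim\rho$. (If $x$ lies in a different component from $\rho$ then $B(G)=\infty$ and the claim is trivial, since for $\a\ge 2$ the class $\cB(n,\a)$ contains connected graphs; so I may assume $R_{x\rho}$ finite, whence $x$ has an incident edge.) Pick any edge $e=xu$; by the previous observation $u\ne\rho$. Let $G/e$ be the graph on $n$ non-root-plus-root vertices obtained by contracting $e$, merging $x$ into $u$ and discarding any loops thereby created, so that $e(G/e)\le e(G)-1$.

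The key step is the estimate $\rtot(G/e)\le \rtot(G)-R^{G}_{\rho x}$. Contracting $e$ amounts to shorting it (setting its resistance to $0$), so by the monotonicity principle of Appendix~A no effective resistance to $\rho$ increases; in particular, writing $u$ for the merged vertex, $R^{G/e}_{\rho u}\le R^{G}_{\rho u}$ and $R^{G/e}_{\rho y}\le R^{G}_{\rho y}$ for every other surviving $y$. The non-root vertices of $G/e$ are precisely those of $G$ with $x$ absorbed into $u$, so summing over them gives $\rtot(G/e)\le\sum_{y\in V(G)\setminus\{\rho,x\}}R^{G}_{\rho y}=\rtot(G)-R^{G}_{\rho x}$, the saving coming from the vertex $x$ that no longer appears.

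Finally I would form $H$ by adding a single new vertex $w$ joined to $\rho$ by one edge. This preserves the vertex count at $n+1$ and keeps $e(H)\le e(G)\le \a n/2$, so $H\in\cB(n,\a)$; and since $w$ has degree~$1$ it carries no through-current, so $\rtot(H)=\rtot(G/e)+1\le \rtot(G)-R^{G}_{\rho x}+1<\rtot(G)$, using $R^{G}_{\rho x}>1$. Dividing by $n$ yields $B(H)<B(G)$. The only genuine work is the monotonicity estimate of the third step; the main thing to be careful about is the bookkeeping when $x$ is merged into $u$ (charging the merged vertex's resistance correctly) together with the routine handling of loops, multiple edges, and the degenerate disconnected case.
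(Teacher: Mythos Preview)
Your argument is correct and follows the same route as the paper's proof: contract an edge incident with $x$ (using monotonicity to bound the drop in $\rtot$ by at least $R_{x\rho}>1$), then reattach a leaf at $\rho$ to restore the vertex count while adding back only $1$ to $\rtot$. Your extra observation that $x\not\sim\rho$ is correct but not actually needed --- the contraction-and-leaf argument goes through verbatim even if the contracted edge runs to $\rho$ --- though it does no harm.
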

 
\begin{proof}
  Suppose that $R_{x\rho}>1$. Contracting an edge incident with $x$
  gives a graph $G'$ with $n-1$ non-root vertices and at most $e(G)-1$
  edges (exactly $e(G)-1$ unless the contracted edge was one of a set
  of parallel edges). We have not increased the resistance between any
  of the remaining vertices and the root so
  $\rtot(G')<\rtot(G)-1$. Now adding a leaf to the root gives a graph
  with~$n$ non-root vertices and at most $e(G)$ edges. The new vertex
  has resistance 1 to the root and so $\rtot(H)\leq
  \rtot(G')+1<\rtot(G)$. Since $H$ has the same number of vertices as
  $G$ and at most as many edges this shows that $B(H)<B(G)$ as
  claimed.
\end{proof}

\section{Equivalence of Models}\label{section:equivalence}

In this section we prove Theorem~\ref{root-unroot:t}
showing that the rooted and unrooted models behave the same
asymptotically. As a corollary we deduce that many of the results of
the previous section also hold in the unrooted case.

\begin{defn}
  For any graph $G$ and multiset $S$ of vertices let $G_S$ be the
  graph formed from $G$ by adding a new vertex $\rho$ joined to each
  vertex of $S$ (by multiple edges if the vertex occurs multiple
  times). We call the new vertex the \emph{root} and the vertices in
  $S$ the \emph{sinks}.
\end{defn}
\begin{proof}[Proof of Theorem~\ref{root-unroot:t}]
  In fact we show that there is a multiset $S$ of vertices with $|S|=n$
  such that the graph $G_S$ satisfies
  \[
  B(G_S)\le \frac12 A'(G) + \frac1{2s} A'(G)+\frac{1}{s}.
  \]
  Since $A'(G)=\frac{n-1}{n}A(G)$ this implies the theorem with
  $G'=G_S$.

  We do our calculations in $G$. Fix one vertex $v\in G$ and let
  $I_i(e)$ be the current in edge $e$ when a current of size 1 flows into 
  the network at $i$ and out from the network at $v$. By the principle of
  superposition the current in the edge $e$ when a current of size 1
  flows in at $i$ and out at $j$ is $I_i(e)-I_j(e)$.

  The total power of the flow when a current of size 1 enters at
  $i$ and leaves at $j$ equals the resistance $R_{ij}$. Thus
  \[
  R_{ij}=\sum_e (I_j(e)-I_i(e))^2
  \]
  and, hence, the average resistance over all pairs is 
  \begin{equation}\label{e:sum-ij-resistance}
  \frac{1}{n^2} \sum_{i,j} R_{ij}=\frac{1}{n^2}\sum_{i,j}\sum_e
  (I_j(e)-I_i(e))^2=\sum_e \frac{1}{n^2}\sum_{i,j}(I_j(e)-I_i(e))^2.
  \end{equation}

  For each $e$ let $X(e)$ be the random variable obtained by picking a
  vertex $i$ uniformly at random and setting $X(e)=I_i(e)$ and let
  $X'(e)$ be an independent copy of $X(e)$. Then equation
  (\ref{e:sum-ij-resistance}) becomes
  \begin{align*}
    A'(G)=\frac{1}{n^2} \sum_{i,j} R_{ij}
    &=\sum_e \var(X(e)-X'(e))&\text{since $\E(X(e)-X'(e))=0$}\\
    &=\sum_e \var(X(e))+\var(X'(e))&\text{since $X$ and $X'$ are independent}\\
    &=2\sum_e\var(X(e)).
  \end{align*}

  Next we bound $B(G_S)$. Suppose that $S$ is a fixed multiset of $s$
  vertices. One flow of size 1 entering at $i$ and leaving at $\rho$
  is given by sending a current of $1/s$ to each of the sinks and on
  to the root: that is the flow given by $\frac1s\sum_{j\in S}
  (I_i-I_j)$, together with a current of size $1/s$ in each of the
  edges to the root. We know that the actual current flow is the one
  minimising the power, so the actual power is at most the power in
  this flow. Thus
  \[
  R_{i\rho}\le \sum_e \left(\frac1s\sum_{j\in S}(I_i(e)-I_j(e))\right)^2+s\cdot\frac1{s^2}.
  \]

  Averaging this  over all $i$
  gives
  \begin{align*}\label{e:rbarw}
    B(G_S)
    &\le \frac{1}{n}\sum_{i\in V} \sum_e \left(\frac1s\sum_{j\in
        S}(I_i(e)-I_j(e))\right)^2+\frac1{s}\notag\\
    &\le \frac{1}{n}\sum_{i\in V} \sum_e \left(I_i(e)-\frac1s\sum_{j\in
        S}I_j(e)\right)^2+\frac1{s}\notag\\
    &= \E_X\left(\sum_e \left(X(e)-\frac1s\sum_{j\in
        S}I_j(e)\right)^2\right)+\frac1{s}\notag\\
    &= \sum_e \E_X \left(\left(X(e)-\frac1s\sum_{j\in
        S}I_j(e)\right)^2\right)+\frac1{s}\notag\\
  \end{align*}
  
  Now, suppose that we pick $S$ uniformly at random from all
  multisets of $s$ vertices (i.e., a random $s$-tuple of
  vertices), and independently of $X$ and $X'$. Obviously
  \[\E_S\left(\frac1s\sum_{j\in S}I_j(e)\right)=\E_X(X(e))\qquad\text{so}\qquad
  \E_{X,S} \left(X(e)-\frac1s\sum_{j\in
        S}I_j(e)\right)=0
  \]
Hence, we get
\begin{align*}
    \E_S(B(G_S))&\le  \sum_e \E_{X,S} \left(\left(X(e)-\frac1s\sum_{j\in
        S}I_j(e)\right)^2\right)+\frac1{s}\\
  &=\sum_e \var_{X,S} \left(X(e)-\frac1s\sum_{j\in
        S}I_j(e)\right) +\frac1{s}\\
  &=\sum_e \var_{X} \left(X(e)\right)+\sum_e\var_{S}\left(\frac1s\sum_{j\in
        S}I_j(e)\right) +\frac1{s}\\
  &=\sum_e \var_{X} \left(X(e)\right)+\frac1s\sum_e\var_{X}(X(e)) +\frac1{s}\\
  &=\frac{A'(G)}{2}+\frac{A'(G)}{2s} +\frac1{s}.
\end{align*}
Thus the result is true `on average' so there must be a set $S$ for which it holds.
\end{proof}
We remark that we could insist that $S$ is actually a set (rather
than a multiset). Indeed, standard results for the variance of a sum
with, and without, replacement show that
\[\var_{S}\left(\frac1s\sum_{j\in
    S}I_j(e)\right)\] is smaller for sets than multisets, and using
this in the estimation of $\E_S(B(G_S))$ implies this slightly
stronger result.

\begin{theorem}\label{root=unroot:t}
  For any $\alpha$ we have $b(\alpha)=a(\alpha)/2$.
\end{theorem}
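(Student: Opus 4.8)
The plan is to prove the two inequalities $2b(\alpha)\le \liminf_n a_n(\alpha)$ and $\limsup_n a_n(\alpha)\le 2b(\alpha)$ separately. Since $b(\alpha)=\lim_n b_n(\alpha)$ already exists by Lemma~\ref{limit:lem}, sandwiching these two bounds simultaneously forces the limit defining $a(\alpha)$ to exist \emph{and} to equal $2b(\alpha)$, so that $b(\alpha)=a(\alpha)/2$. In other words, I do not assume the existence of $\lim_n a_n(\alpha)$; I deduce it. Throughout I work with $A'$ rather than $A$, since $A=\tfrac{n}{n-1}A'$ has the same asymptotics, and I freely use the continuity and convexity of $b$ on $[2,\infty)$ from Lemma~\ref{convex:lem}.

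For the first inequality I would start from the unrooted side. Fix $s\in\N$ and, for each $n$, let $G$ be a graph on $n$ vertices with at most $\alpha n/2$ edges achieving $A'(G)=a_n(\alpha)$. The bound established inside the proof of Theorem~\ref{root-unroot:t} (which attaches a root to $G$ by exactly $s$ edges and gives $B(G')\le \tfrac12 A'(G)+\tfrac{1}{2s}A'(G)+\tfrac1s$) then produces a rooted graph $G'$ on $n+1$ vertices with at most $\alpha n/2+s$ edges. This edge count means precisely that $G'\in\cB(n,\alpha+2s/n)$, so $b_n(\alpha+2s/n)\le B(G')$, and since $b(\beta)=\inf_N b_N(\beta)$ we get $b(\alpha+2s/n)\le \tfrac12 a_n(\alpha)(1+1/s)+1/s$. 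Letting $n\to\infty$ and using the continuity of $b$ to replace $b(\alpha+2s/n)$ by $b(\alpha)$ yields $b(\alpha)\le \tfrac12\bigl(\liminf_n a_n(\alpha)\bigr)(1+1/s)+1/s$, and finally letting $s\to\infty$ gives $2b(\alpha)\le\liminf_n a_n(\alpha)$.

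For the reverse inequality I would run the triangle inequality $R_{xy}\le R_{x\rho}+R_{\rho y}$ (Lemma~\ref{l:triangle-inequality}) in the opposite direction. Take a rooted graph $H$ on $n+1$ vertices with at most $\alpha n/2$ edges realising $B(H)=b_n(\alpha)$, and regard it simply as an ordinary graph on $n+1$ vertices. Summing the triangle inequality over all ordered pairs $x,y$ and collapsing the two resulting sums gives $\sum_{x,y}R_{xy}\le 2(n+1)\rtot(H)=2(n+1)nB(H)$, whence $A'(H)\le \tfrac{2n}{n+1}B(H)$. Because $\alpha n/2\le \alpha(n+1)/2$, the graph $H$ is an admissible competitor in the definition of $a_{n+1}(\alpha)$, so $a_{n+1}(\alpha)\le \tfrac{2n}{n+1}b_n(\alpha)$. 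Taking $\limsup_n$ and using $b_n(\alpha)\to b(\alpha)$ gives $\limsup_n a_n(\alpha)\le 2b(\alpha)$, which closes the sandwich.

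I expect the main obstacle to be the bookkeeping rather than any deep idea: one must track the vertex counts ($n$ versus $n+1$, and whether the root is counted) and the edge budget carefully, which is exactly why the average degree drifts from $\alpha$ to $\alpha+2s/n$ when the root is attached. It is this drift that makes the continuity of $b$ indispensable, and it is the reason the argument is arranged to deduce the existence of $\lim_n a_n(\alpha)$ rather than take it for granted. The only other points requiring care are the distinction between $A$ and $A'$ and the contribution of the root to the averages in the second inequality; both are routine.
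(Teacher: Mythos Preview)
Your proposal is correct and follows essentially the same approach as the paper: the triangle inequality for one direction and Theorem~\ref{root-unroot:t} together with the continuity of $b$ for the other. Your version is in fact slightly more careful than the paper's, since you explicitly work with $\liminf$ and $\limsup$ to \emph{deduce} the existence of $a(\alpha)$ rather than tacitly assume it, and you take the double limit (fix $s$, let $n\to\infty$, then $s\to\infty$) instead of setting $s=\log n$; both variants are fine.
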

\begin{proof}
  First observe that $a(\alpha)\le 2b(\alpha)$. Indeed, for any rooted
  graph $G$ the triangle inequality for resistances
  (Lemma~\ref{l:triangle-inequality}) gives $R_{xy}\le R_{x\rho}+R_{\rho y}$
  which implies $A(G)\le 2B(G)$.

  For the reverse inequality suppose that $G_n$ is a sequence of (unrooted)
  graphs with $n\to \infty$, $e(G_n)\le \alpha n/2$ and $A(G_n)\to
  a(\alpha)$. Let $s=\log n$ and form graphs $H_n$ as given by
  Theorem~\ref{root-unroot:t}. The graph $H_n$ has $n+1$ vertices, at
  most $\alpha n +\log n$ edges, and resistance to the root at most
  $A'(G)/2+O(1/\log n)$. Thus
\[
\lim_{x\to \alpha^+}b(x)\le a(\alpha)/2.
\]
Since $b$ is continuous (Lemma~\ref{convex:lem}) the result follows.
\end{proof}
Theorem~\ref{root=unroot:t} and Lemmas~\ref{limit:lem}
and~\ref{convex:lem} combine to prove
\begin{corollary}
  The function $a(\a)$ exists and is convex and continuous on~$[2,\infty)$.\qed
\end{corollary}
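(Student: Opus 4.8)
The plan is to obtain every claimed property of $a$ directly from the identity $a(\a)=2b(\a)$ together with the facts already established for $b$: its existence and coincidence with the infimum (Lemma~\ref{limit:lem}) and its convexity and continuity on $[2,\infty)$ (Lemma~\ref{convex:lem}). Once the identity $a=2b$ and the existence of the limit $a(\a)=\lim_n a_n(\a)$ are in hand, convexity and continuity are immediate, since a positive constant multiple of a convex continuous function is again convex and continuous.

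The one point needing genuine care is that the limit defining $a$ actually exists; I would establish this by sandwiching $\liminf_n a_n(\a)$ and $\limsup_n a_n(\a)$ between $2b(\a)$ on both sides, thereby making explicit the existence that Theorem~\ref{root=unroot:t} tacitly assumes. For the upper bound, take a rooted graph $G$ on $n+1$ vertices with at most $\a n/2$ edges and $B(G)=b_n(\a)$. Viewing $G$ as unrooted, the triangle inequality for resistances (Lemma~\ref{l:triangle-inequality}) gives $A(G)\le 2B(G)$; since $A'(G)=\frac{n}{n+1}A(G)$ and $G$ is feasible for $a_{n+1}(\a)$, this yields $a_{n+1}(\a)\le A'(G)\le 2b_n(\a)$, so letting $n\to\infty$ gives $\limsup_n a_n(\a)\le 2b(\a)$. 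For the lower bound, take an unrooted $G$ on $n$ vertices with at most $\a n/2$ edges and $A'(G)=a_n(\a)$, set $s=\lceil\log n\rceil$, and apply Theorem~\ref{root-unroot:t} to produce a rooted $G'=G_S$ on $n+1$ vertices with at most $\a n/2+s$ edges and $B(G')\le\frac12 A'(G)(1+1/s)+1/s$. Then $G'\in\cB(n,\a+2s/n)$, so $b(\a+2s/n)\le b_n(\a+2s/n)\le B(G')$; letting $n\to\infty$, so that $s\to\infty$ and $2s/n\to0$, and invoking the continuity of $b$ to replace $b(\a+2s/n)$ by $b(\a)$, gives $b(\a)\le\frac12\liminf_n a_n(\a)$.

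Combining the two bounds forces $2b(\a)\le\liminf_n a_n(\a)\le\limsup_n a_n(\a)\le 2b(\a)$, so the limit $a(\a)$ exists and equals $2b(\a)$, which is the content of Theorem~\ref{root=unroot:t} with the existence of the limit now exhibited. Convexity and continuity of $a$ on $[2,\infty)$ then follow at once from Lemma~\ref{convex:lem}.

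The main obstacle I anticipate is purely bookkeeping: keeping the vertex counts ($n$ versus $n+1$) and the conversion between $A$ and $A'$ straight in the upper bound, and verifying that the extra $s=\lceil\log n\rceil$ edges added in Theorem~\ref{root-unroot:t} perturb the average degree only by $o(1)$, so that the continuity of $b$ legitimately lets us pass $b(\a+2s/n)\to b(\a)$ in the limit. Beyond this, the argument is a direct transfer of properties through the linear relation $a=2b$.
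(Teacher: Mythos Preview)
Your proposal is correct and follows exactly the route the paper intends: deduce everything about $a$ from the identity $a(\a)=2b(\a)$ of Theorem~\ref{root=unroot:t} together with Lemmas~\ref{limit:lem} and~\ref{convex:lem}. You are in fact more careful than the paper, since the proof of Theorem~\ref{root=unroot:t} as written tacitly assumes that $\lim_n a_n(\a)$ exists, whereas your liminf/limsup sandwich establishes this existence explicitly before invoking the equality.
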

As we remarked previously we do not know of any direct proof of the
convexity of $a$ without going via the equivalence with $b$.

\section{Constructions}\label{s:upper}

\subsection{Stars of Edges and Triangles}

We start with a simple construction for $\alpha$ between 2 and 3.
\begin{lemma}\label{star:lem}
For $\a\in[2,3]$ we have  $b(\a)\leq\frac{5-\a}{3}$.
\end{lemma}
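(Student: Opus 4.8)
The plan is to build, for $\alpha\in[2,3]$, an explicit rooted graph interpolating between a star (at $\alpha=2$) and a ``star of triangles'' (at $\alpha=3$), and to compute its average rooted resistance directly. Concretely, take the root $\rho$, attach $p$ \emph{leaves} to $\rho$ by single edges, and attach a further $2q$ vertices arranged into $q$ \emph{triangles}, each triangle consisting of $\rho$ together with two vertices $a,b$ and the three edges $\rho a$, $\rho b$, $ab$. The number of non-root vertices is then $n=p+2q$ and the number of edges is $p+3q$.

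First I would record the two elementary resistance computations. A leaf has resistance exactly $1$ to $\rho$. For a vertex $a$ in a triangle, the direct edge $\rho a$ (resistance $1$) is in parallel with the two-edge path $\rho b a$ (resistance $2$), giving $R_{\rho a}=\frac{1\cdot 2}{1+2}=\frac23$, and by symmetry the same holds for $b$. Since distinct triangles and leaves meet only at the root, these contributions do not interact, so $\rtot=p\cdot 1+2q\cdot\frac23=p+\frac{4q}{3}$.

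Next I would choose the parameters to realise the prescribed average degree. Solving $2(p+3q)/(p+2q)=\alpha$ together with $n=p+2q$ gives $q=\tfrac12 n(\alpha-2)$ and $p=n(3-\alpha)$, both nonnegative precisely because $\alpha\in[2,3]$; this also yields $e(G)=p+3q=\alpha n/2$, so $G\in\cB(n,\alpha)$. Substituting into $\rtot$ and simplifying,
\[
B(G)=\frac{\rtot}{n}=(3-\alpha)+\frac{2(\alpha-2)}{3}=\frac{5-\alpha}{3},
\]
which is exactly the claimed bound.

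The only real obstacle is integrality: for general $n$ and $\alpha$ the quantities $p$, $q$, and $\alpha n/2$ need not be integers. I would handle this by rounding $q$ down to an integer and setting $p=n-2q$, which keeps $e(G)\le\alpha n/2$ and perturbs both the edge count and $\rtot$ by $O(1)$, hence changes $B(G)$ by only $O(1/n)$. Since Lemma~\ref{limit:lem} gives $b(\alpha)=\inf_n b_n(\alpha)$, exhibiting such graphs for arbitrarily large $n$ with $B(G)=\frac{5-\alpha}{3}+O(1/n)$ forces $b(\alpha)\le\frac{5-\alpha}{3}$, completing the proof.
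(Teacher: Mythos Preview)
Your proof is correct and uses exactly the same underlying construction as the paper --- a root with some pendant leaves and some pendant triangles --- and the paper even remarks, just after its proof, that this is the explicit graph realising the bound. The only difference is in packaging: the paper establishes the two endpoint values $b(2)\le 1$ and $b(3)\le \tfrac23$ and then invokes the convexity of $b$ (Lemma~\ref{convex:lem}) to interpolate, whereas you build the interpolating graph directly and compute $B(G)$ by hand, appealing only to Lemma~\ref{limit:lem} to pass to the limit. Your route is slightly more self-contained (it does not need convexity, whose proof in any case amounts to the same gluing of constructions), while the paper's route is shorter once Lemma~\ref{convex:lem} is available.
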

\begin{proof}
  For $\a=3$ and $n$ even, let $G$ be a star of triangles: that is a
  collection of triangles each containing the root but otherwise
  disjoint. It is easy to check that the resistance of any vertex to
  the root is $2/3$ and hence $b_n(3)\le 2/3$. Since $b_n(2)\le 1$ (the
  star) the convexity of $b$ (Lemma~\ref{convex:lem}) gives the result.
\end{proof}

Indeed, the proof of convexity gives an explicit construction for all
$\alpha\in[2,3]$ -- namely the graph with $n$ vertices together with a
root vertex, consisting of $(\alpha-2)n/2$ triangles and $(3-\alpha)n$
edges intersecting in a common root.

\subsection{Mixed Constructions}

For $\a\geq3$ we have the bound $b(\a)\leq\frac{(\a-1)}{\a(\a-2)}$ coming from the random $\a$-regular graph (currently this is just a heuristic bound but it will be made rigorous shortly). Clearly this only holds for integer $\a$. However, we can use the fact that the graph of $b(\a)$ against $\a$ is convex (Lemma \ref{convex:lem}) to combine constructions. For instance, the graph of the convex hull of the points $(2,1)$ and the points $\{(\a,\frac{(\a-1)}{\a(\a-2)}):\a=3,4,\dots\}$ is an upper bound for $b(\a)$, achieved by adding leaves to the root of a random $\a$-regular graph. Unfortunately, this does not give a better bound than Lemma \ref{star:lem}. That is, we cannot improve the bound $b(3)\leq2/3$ by this process.

However, the convex hull of $(2,1)$ and the curve $\{(\a,\frac{(\a-1)}{\a(\a-2)}):\a\in[3,\infty)\}$ does give a better bound. Of course, random regular graphs only exist for integer $\a$. However, if we could achieve something close to the random regular heuristic at non-integer~$\a$ we would expect to be able to give an improved construction at, for instance, $\a=3$. Perhaps suprisingly, this can be done. 

To achieve this we first prove a bound on the average resistance based on the `local' resistance from a vertex out into the graph (recall that this was the basis of the heuristic argument in the Introduction). This is Theorem \ref{t:main-rooted-rrg-lemma} in the next section. The setting for this is a slightly modified model which we will call a random $p$-rooted graph. This result will allow us to prove bounds close to the analogue of the random regular heuristic bound for our constructions with non-integer average degree. Specifically, we will show that there is a graph $G$ with $3<\a<4$ and $B(G)$ (calculated via our `local' resistance result) only a little larger than $\frac{(\a-1)}{\a(\a-2)}$.

It is worth observing that if we were only interested in proving the heuristic bound for random regular graphs then a shorter argument using eignevalue methods could be used. The distribution of eigenvalues of the adjacency matrix of a random $\a$-regular graph was determined by McKay \cite{MCKAY1981203}. An $\a$-regular graph with adjacency matrix $M$ has Laplacian matrix $\alpha I_n-M$ and so McKay's result also gives the distribution of Laplacian eigenvalues from which $A(G)$ can be calculated. It follows that if $G$ is a random $\a$-regular graph on $n$ vertices then with high probability $A(G)=\frac{2(\alpha-1)}{\alpha(\alpha-2)}+o(1)$ (we thank L\'aszl\'o Lov\'asz and Bojan Mohar for pointing out this eigenvalue argument). However, it is important for our application that the bound holds for non-regular graphs for which we need the $p$-rooted model and the full strength of Theorem \ref{t:main-rooted-rrg-lemma}

\subsection{Bounds Using Local Resistance}\label{s:regular}

We start by defining the graph construction we will use. The construction is based on a random rooting similar
to that used in Theorem~\ref{root-unroot:t}. We will prove an upper bound on the average resistance of graphs constructed by this random rooting which shows, for instance, that applying it to any $\a$-regular graph $G$ with suitably large girth gives a graph $G'$ with $A(G')=\frac{2(\a-1)}{\a(\a-2)}+o(1)$.
\begin{defn}
  Suppose that $G$ is a graph and $0<p <1$. Define $\widehat G$ \emph{
    the random $p$-rooted graph formed from $G$} to be the following
  (random) rooted graph.  Form a random set $S$, the \emph{sinks}, by
  including each vertex independently with probability $p$.  Add a root
  vertex $\rho$, and for each vertex $x\in S$ add $d_G(x)-1$ parallel
  edges from $x$ to $\rho$.
\end{defn}

First we prove a lemma about the resistance of this construction when
applied to a small tree. Since we will be considering several
different trees it is convenient to make the following definition.
\begin{defn}
  Suppose that $T$ is a tree and $x\in V(T)$. Then $R(x,T)$,
  \emph{the resistance of the tree from $x$}, is the resistance between 
  $x$ and the leaves of the tree identified to a single vertex.
\end{defn}

For a tree $T$ we would like to bound the expected resistance of $x$
to the root in $\widehat T$. However, there is a positive chance that
there are no edges between the root and $T$ which would give an
infinite resistance to the root. Instead, we show that the the
resistance in $\widehat T$ is `unlikely to be large'.

\begin{lemma}\label{l:rooted}
  Let $0<\eps<1$ be fixed.  Suppose that $T$ is a tree, $x\in V(T)$,
  and the depth of $T$ from $x$ is $\ell$. Let $\widehat T$ be the
  random $\eps$-rooted graph formed from $T$. Then,
  \[
  \Prb\left(R_{x\rho}(\widehat T)>(1+\eps)R(x,T)\right)\le \frac{4(1-\eps)^\ell}{\eps}
  \]
\end{lemma}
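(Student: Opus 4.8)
The plan is to root $T$ at $x$ and analyse the resistance to $\rho$ through the series--parallel structure of the tree, working throughout with conductances rather than resistances. For a vertex $v$ write $T_v$ for the subtree of $T$ below $v$, let $C_v=1/R_{v\rho}(\widehat{T_v})$ be the (random) conductance from $v$ to $\rho$ using only the edges inside $T_v$ together with their sink-edges, and let $c_v=1/R(v,T_v)$ be the target conductance. With $g(t)=t/(1+t)$, the parallel/series rules give the two recursions $c_v=a_v+\sum_j g(c_{v_j})$ and $C_v=(d_v-1)\mathbb{1}[v\in S]+\sum_j g(C_{v_j})$, where the sums run over the internal children $v_j$ of $v$ and $a_v$ is the number of leaf-children of $v$. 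The one genuine asymmetry is that a leaf contributes conductance $1$ to the target (a leaf is an exit for $R(v,T_v)$) but contributes $0$ in $\widehat T$, since a leaf has degree $1$ and so even a leaf-sink receives $d_v-1=0$ edges to $\rho$. Since making $v$ a sink adds conductance $d_v-1\ge a_v+\sum_j g(c_{v_j})=c_v$ (using $g\le1$), sinks can only help: for every non-root $v$ the event $v\in S$ already forces $C_v\ge c_v$.

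I would then measure failure through the deficit $D_v=\max(c_v-C_v,0)\ge0$; as $R_{v\rho}(\widehat{T_v})=1/C_v$, the event in the lemma is exactly $\{D_x>\tfrac{\eps}{1+\eps}c_x\}$. The engine of the argument is the exact identity $g(c)-g(C)=\frac{c-C}{(1+c)(1+C)}$, which (discarding $(1+C)\ge1$) yields, for a non-sink $v$, the contraction inequality
\[
D_v\le a_v+\sum_j \frac{D_{v_j}}{1+c_{v_j}},
\]
while $D_v=0$ whenever $v$ is a non-root sink. The factor $\tfrac1{1+c_{v_j}}$ is essential: the crude Lipschitz estimate $g(c)-g(C)\le c-C$ would lose a factor of $\ell$ along a long path, whereas the true contraction makes the deficit shrink at precisely the rate demanded by the bound.

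Because the indicator $\mathbb{1}[v\in S]$ is independent of everything occurring strictly below $v$, taking expectations collapses the above to the deterministic recursion $\E D_v\le (1-\eps)\bigl(a_v+\sum_j \tfrac{\E D_{v_j}}{1+c_{v_j}}\bigr)$. I would then prove by induction on the remaining depth $d(v)=\ell-\mathrm{dist}(x,v)$ that $\E D_v\le (1-\eps)^{d(v)}c_v$. The depth hypothesis (every leaf of $T$ at distance $\ell$ from $x$) does the crucial work here: it forces each vertex to have either only internal children or only leaf-children, so at every step either $a_v=0$, in which case the recursion reproduces $c_v=\sum_j g(c_{v_j})$ exactly while collecting the extra factor $1-\eps$, or $v$ is a depth-$(\ell-1)$ star of leaves, which is the base case $\E D_v\le(1-\eps)a_v=(1-\eps)c_v$. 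Applying this at $v=x$ gives $\E D_x\le(1-\eps)^\ell c_x$, and Markov's inequality then closes the proof:
\[
\Prb\bigl(R_{x\rho}(\widehat T)>(1+\eps)R(x,T)\bigr)=\Prb\Bigl(D_x>\tfrac{\eps}{1+\eps}c_x\Bigr)\le \frac{(1+\eps)\,\E D_x}{\eps\,c_x}\le\frac{2(1-\eps)^\ell}{\eps}.
\]

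I expect two points to be the real obstacles. First is the leaf-children issue: leaves are worthless as sinks, so a leaf attached close to $x$ would single-handedly wreck the bound, and recognising that the depth hypothesis is exactly what prevents leaf- and internal children from coexisting at a vertex is the key structural observation. Second, one must keep the contraction factor $\tfrac1{1+c_{v_j}}$ rather than settle for the Lipschitz bound, as otherwise the geometric rate $(1-\eps)^\ell$ degrades by a factor of $\ell$. A minor loose end is the root itself, where a sink supplies conductance $d_x-1$ instead of the ideal $d_x$ and so contributes an $O(1)$ deficit correction; this is comfortably swallowed by the gap between the $2/\eps$ my argument produces and the $4/\eps$ asserted in the statement.
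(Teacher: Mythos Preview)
Your approach is sound and genuinely different from the paper's. The paper argues via flows: it considers the unit current from $x$ to the identified leaves in $T$, declares a root-to-leaf path \emph{good} if it meets a sink before reaching depth $\ell$, and lets $\delta$ be the total $T$-current carried by bad paths. Restricting the $T$-flow to the union $T_S$ of good paths and rescaling by $(1-\delta)^{-1}$ produces a valid unit flow, whence $R_{x\rho}(\widehat T)\le R(x,T_S)\le (1-\delta)^{-2}R(x,T)$; since $\E\delta=(1-\eps)^\ell$, Markov's inequality on $\delta$ finishes. You instead run a conductance recursion up the tree, exploiting the exact identity $g(c)-g(C)=\frac{c-C}{(1+c)(1+C)}$ to obtain the contraction $D_v\le\sum_j D_{v_j}/(1+c_{v_j})$. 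Your observation that a non-root sink already meets its target conductance (so $D_v=0$ there) plays the same role as the paper's ``meeting a sink makes a path good''; and your contraction factor $1/(1+c_{v_j})$ is precisely what reconstructs $c_v=\sum_j g(c_{v_j})$ in the inductive step, matching the paper's use of the actual $T$-currents rather than a uniform weighting of paths. Your route is more algebraic and even shaves the constant; the paper's flow argument is more geometric and makes the mechanism visually transparent.

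One caveat: the root correction is not quite as tame as you claim. Since $x$ has no parent, a sink at $x$ contributes only $d_x-1$ rather than $d_x$, so you cannot conclude $D_x=0$ on $\{x\in S\}$ and therefore cannot collect the final factor $(1-\eps)$ at $x$. Simply discarding the sink event at $x$ yields $\E D_x\le (1-\eps)^{\ell-1}c_x$, and then Markov gives $\frac{(1+\eps)(1-\eps)^{\ell-1}}{\eps}$; this is at most $\frac{4(1-\eps)^\ell}{\eps}$ only when $\eps\le 3/5$, so ``comfortably swallowed by the factor-of-two gap'' is not uniform over $\eps\in(0,1)$. (The paper's own Step~1 isomorphism has an analogous wrinkle when $x$ itself lies in $S$.) In the application the lemma is invoked only with small $\eps$, so this is harmless in practice, but to match the statement for all $\eps<1$ you would need a slightly sharper treatment at the root.
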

\begin{proof}
  Let $D_\ell$ denote all the vertices of $T$ at distance exactly $\ell$ from $x$,
  and let $N=T\setminus D_\ell$.  We consider the paths from $x$
  to~$D_\ell$. We define the $T$-current in a path to be the current
  in the final edge when a current of size 1 flows from $x$
  to~$D_\ell$. This means that the current in any edge is the sum of
  the currents in all the paths through that edge.

  Let $S$ be the set of sinks.  We say a path from $x$ to $D_\ell$ in
  $T$ is \emph{good} if it meets a vertex of $S\cap N$ and \emph{bad}
  otherwise. Let $T_S$ denote the subgraph of $T$ formed by taking the
  union of all good paths. Further, let $\delta$ be the sum of the
  currents flowing in the bad paths in the $T$-flow.

  The proof consists of three steps. First, we show that the
  resistance $R_{x\rho}(\widehat T)\le R(x,T_{S})$. Secondly, we show that
  $R(x,T_{S})\le (1-\delta)^{-2}R(x,T)$. Finally we put these results
  together to prove the bound in the statement of the Lemma.

\subsubsection*{Step 1: $R_{x\rho}(\widehat T)\le R(x,T_{S})$}
Let $N_S$ be the vertices on the good paths of $T$ up to and
including the first vertex in $S$ on the path.

Let $T'$ be the subgraph of $\widehat T$ induced by
$N_S\cup\{\rho\}$. Since $T'$ is a subgraph of $\widehat T$,
$R_{x\rho}(\widehat T)\le R_{x\rho}(T')$.

Now consider the graph $T_S'$ obtained from $T_{S}$ by identifying all
vertices after the first vertex in $S$ (which necessarily includes all
of $T_S\cap D_\ell$) together to one vertex $u$. It is easy to see
that $T'$ is isomorphic to $T_S'$. (This is why we chose to add
$d(x)-1$ edges to each vertex $x$ in $S$).  Since it is formed by
identifying vertices in $T_{S}$ we see that
$ R(x,T_{S})\ge R_{xu}(T_S')= R_{x\rho}(T')$.  Combining this with the
previous inequality completes this step.

\subsubsection*{Step 2: $R(x,T_S)\le (1-\delta)^{-2}R(x,T)$}
We construct a current flow in $T_S$ by restricting the flow in $T$ to
$T_S$: that is we just delete all the flow that exits from a vertex of
$D_\ell\cap (T\setminus T_S)$.

Obviously the current in an edge is no greater than in the original
$T$-flow. The power of the original flow in $T$ is exactly the resistance
$R(x,T)$ and hence the power of the flow in $T_S$ is at most
$R(x,T)$.

The total current of this flow in $T_S$ is only $1-\delta$. However,
if we multiply all currents of this flow by $(1-\delta)^{-1}$ then we
have a unit current from $x$ to $D_\ell$. The power dissipated in each
edge has gone up by a factor of $(1-\delta)^{-2}$ and, therefore, the
power of the new flow is at most $(1-\delta)^{-2}R(x,T)$.  Since the
resistance $R(x,T_S)$ is the minimum power over all unit flows this step
is complete.

\subsubsection*{Step 3}

The probability that a particular path is bad is $(1-\eps)^\ell$. Thus,
$\E(\delta)=(1-\eps)^\ell$.  Thus,
\begin{align*}
\Prb\left(R_{x\rho}(\widehat T)>(1+\eps)R(x,T)\right)
&\le\Prb\left(\delta>1-\frac{1}{\sqrt{1+\eps}}\right)\qquad&\text{by Steps 1 and 2}\\
&\le\Prb\left(\delta>\frac{\eps}{4}\right)&\text{since $\eps<1$}\\
&\le \frac{\E(\delta)}{\eps/4}&\text{Markov's Inequality}\\
&\le \frac{4(1-\eps)^\ell}{\eps}
\end{align*}
as required.
\end{proof}
We are ready to prove the main theorem of this section which bounds the average resistance (rooted or unrooted)
based on the `local' resistance from a point out into the general
graph. Since we can easily calculate this local resistance we can use
it to construct graphs in which we can bound the average resistance.
\begin{theorem}\label{t:main-rooted-rrg-lemma}
  Suppose that $(G_n)_{n=1}^\infty$ is a sequence of graphs with the
  following properties. $G_n$ has $n$ vertices, average degree at
  most~$\a$,  and girth at least~$2\ell+2$ with
  $\ell=\ell(n)$ tending to infinity with~$n$.

  For a vertex $x$, let $T_x$ be the subgraph of $G_n$ formed from all
  vertices at distance at most $\ell$ from $x$ (which is necessarily a
  tree by the girth condition).

  Then, for any $\eps>0$ and all sufficiently large $n$, there are
  rooted graphs $G'_n$ formed from $G_n$ with average degree at most
  $\a+\eps$ such that, for all~$x\in V(G)$, 
  \[
  R_{x\rho}(G'_n)\le (1+\eps)R(x,T_x).
  \]
  In particular, the rooted resistance $B(G'_n)$ satisfies
  \[
  B(G'_n)\le \frac{1+\eps}{|G_n|}\sum_{x\in G_n}R(x,T_x).
  \]
\end{theorem}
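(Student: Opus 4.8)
The plan is to apply the $\eps'$-rooted construction from Lemma~\ref{l:rooted} to the whole graph $G_n$ and to show that, with positive probability, \emph{every} vertex simultaneously has small resistance to the root. The crux is that Lemma~\ref{l:rooted} is a statement about a single tree $T$, whereas here I need control over all $n$ local balls $T_x$ at once. Since the tail bound $4(1-\eps')^\ell/\eps'$ decays exponentially in $\ell$ and $\ell\to\infty$, I can afford to take a union bound over all $n$ vertices.

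\emph{First}, I would fix a small parameter $\eps'$ (to be chosen in terms of $\eps$) and form the random $\eps'$-rooted graph $\widehat G_n$: include each vertex in the sink set $S$ independently with probability $\eps'$, and for each sink $x$ add $d_{G_n}(x)-1$ parallel edges to the new root $\rho$. I must check two things about this construction. The \emph{edge count}: the expected number of root edges is $\eps'\sum_x (d_x-1)\le \eps'\cdot \a n$, which by concentration (Markov, or a Chernoff bound on the sum) is with high probability at most, say, $\eps' \a n (1+o(1))$; adding these edges raises the average degree from at most~$\a$ to at most $\a+\eps$ provided $\eps'$ is chosen small enough relative to $\eps$ and $\a$. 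The \emph{resistance}: for each fixed $x$, the ball $T_x$ is a tree of depth~$\ell$ by the girth hypothesis, and the root edges incident to sinks inside $T_x$ form exactly the random $\eps'$-rooted graph on $T_x$ (edges to sinks outside $T_x$ can only decrease $R_{x\rho}$, by Rayleigh monotonicity). Hence Lemma~\ref{l:rooted} applies verbatim to give
\[
\Prb\left(R_{x\rho}(\widehat G_n)>(1+\eps')R(x,T_x)\right)\le \frac{4(1-\eps')^\ell}{\eps'}.
\]

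\emph{Second}, I take a union bound over the $n$ vertices: the probability that \emph{some} vertex $x$ fails the resistance bound is at most $4n(1-\eps')^\ell/\eps'$. Since $\ell=\ell(n)\to\infty$, we have $n(1-\eps')^\ell\to 0$, so for all large $n$ this failure probability is below~$1$ (indeed~$o(1)$). Combining with the high-probability edge bound, for all sufficiently large $n$ there exists a realisation of the sink set for which simultaneously the average degree is at most $\a+\eps$ and $R_{x\rho}(G'_n)\le (1+\eps')R(x,T_x)\le(1+\eps)R(x,T_x)$ for every $x$, where $G'_n$ is that realisation. Averaging the pointwise bound over all $n$ non-root vertices immediately yields
\[
B(G'_n)=\frac1n\sum_{x\in G_n}R_{x\rho}(G'_n)\le \frac{1+\eps}{|G_n|}\sum_{x\in G_n}R(x,T_x),
\]
as required.

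The main obstacle I expect is reconciling the single per-vertex budget of sink edges with the global edge constraint and with the local resistance guarantee \emph{simultaneously}: the resistance bound wants $\eps'$ not too small (so that sinks are dense enough for the tail $4(1-\eps')^\ell/\eps'$ to be useful after the factor $n$), while the degree bound wants $\eps'$ small (so the added edges do not inflate the average degree past $\a+\eps$). The exponential decay in $\ell$ is what resolves this tension: for \emph{any} fixed $\eps'>0$ the tail beats the union-bound factor $n$ once $\ell$ is large, so I am free to pick $\eps'$ as small as the degree constraint demands and then let $n$ (hence $\ell$) grow. A secondary technical point worth stating carefully is the Rayleigh-monotonicity argument that lets me restrict attention to sinks inside $T_x$, ensuring Lemma~\ref{l:rooted} can be invoked on each local tree without interference from the rest of the graph.
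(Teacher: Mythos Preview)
Your union-bound step is a genuine gap. From $\ell(n)\to\infty$ you conclude $n(1-\eps')^\ell\to 0$, but this is false in general: the hypothesis of the theorem imposes no growth rate on $\ell$, so $\ell$ may be, say, $\sqrt{\log n}$, in which case $n(1-\eps')^\ell = \exp(\log n - c\sqrt{\log n})\to\infty$. Worse, even when $\ell=c\log n$ (the largest girth one can hope for in a graph whose degrees are bounded below by~3), you would need $c\log(1/(1-\eps'))>1$, and since you have already committed to choosing $\eps'$ small to meet the degree budget $\a+\eps$, there is no reason this inequality should hold. So the two requirements you identify as being in tension really are in tension here, and the exponential decay in $\ell$ does \emph{not} resolve it: you need $\ell$ large compared with $\log n/\eps'$, not merely large.

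The paper avoids the union bound altogether. It fixes $p=\eps/(8\a)$, applies Lemma~\ref{l:rooted} to each vertex to get that the ``bad'' set $S'=\{x:R_{x\rho}(\widehat G)>(1+\eps)R(x,T_x)\}$ has $\E\big(\sum_{x\in S'}d(x)\big)=o(n)$ (this only needs $(1-p)^\ell=o(1)$, i.e.\ $\ell\to\infty$), and then uses Markov to find a realisation in which both the number of root edges and the total degree of bad vertices are at most $\eps n/4$. It then \emph{repairs} the bad vertices by joining each $x\in S'$ to the root with $d_G(x)$ parallel edges, forcing $R_{x\rho}\le 1/d(x)\le R(x,T_x)$. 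The repair costs at most $\eps n/4$ further edges, so the average degree stays below $\a+\eps$. The point is that one does not need all vertices to be good simultaneously; it suffices that the bad ones are cheap to fix, and ``few in expectation'' is enough for that.
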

\begin{proof}
  We work with each $G_n$ in turn so, for notational simplicity, fix
  $G=G_n$. Let $p=\eps/8\a$ and $\widehat G$ be the random
  $p$-rooted graph formed from $G$ and let $S$ be the sinks of this
  graph.  The expected number of edges we add is
  $p\sum_xd(x)=p\alpha n=\eps n/8$. Hence, with probability at least
  one half we do not add more than $\eps n/4$ edges.
  
  Now, for each vertex $x$, the graph $\widehat T_x$ is a subgraph of
  $\widehat G$ so $R_{x\rho}(\widehat G)\le R_{x\rho}(\widehat T_x)$.  By
  Lemma~\ref{l:rooted}, we have
  \[
  \Prb\left(R_{x\rho}(\widehat G)>(1+\eps)R(x,T_x)\right)\le \frac{4(1-\eps)^\ell}{\eps}=o(1).
  \]

  Let $S'=\{x\in G:R_{x\rho}(\widehat G)>(1+\eps)R(x,T_x)\}$ be the
  set of vertices that do not satisfy this and let
  $D=\sum_{x\in S'}d_G(x)$. Then
  \[\E(D)=\sum_{x\in V}\Prb(x\in S')d_G(x)\le
    \frac{4(1-\eps)^\ell}{\eps}\sum_{x\in V}d_G(x)=o(n)\] so, by
  Markov's inequality, $\Prb(D>\eps n/4)=o(1)$.

  Combining these two bounds we see that with positive probability both the
  number of edges from $S$ to the root is at most $\eps n/4$ and
  $D\le \eps n/4$. Fix $\widehat G$ to be a graph satisfying both of
  these properties.  Form $G'$ from $\widehat G$ by adding $d_G(x)$
  edges between each vertex $x$ of $S'$ and the root.

  We claim all vertices $x$ in $G'$ satisfy
  \[
  R_{x\rho}(G')\le(1+\eps)R(x,T_x).
  \]
  Indeed vertices not in $S'$ trivially satisfy this condition and,
  since we joined each  vertex $x$ in $S'$ to the root
  with $d(x)$ edges, $R_{x\rho}(G')\le 1/d(x)\le R(x,T_x)$ for all
  vertices in $S'$.

  Finally we just need to check the average degree of $G'$. There are
  at most $\eps n/4$ edges from $S$ to the root and we added $D\le
  \eps n/4$ edges from $S'$ to the root. Hence we have added at most
  $\eps n/2$ edges in total which increases the average degree by most
  $\eps$: i.e. the average degree of $G'$ is at most $\a+\eps$ as
  claimed.
\end{proof}

To illustrate Theorem~\ref{t:main-rooted-rrg-lemma}, we prove the bound given by the heuristic
for random regular graphs. Essentially we just need to observe that
for any integer $\a\ge 3$ there exist $\a$-regular graphs with girth
tending to infinity.  Indeed a random $\a$-regular graph has girth at
least $\ell$ with positive probability; see Bollob\'{a}s~\cite{MR1864966} for details. Alternatively, for some
explicit constructions with much stronger bounds on the girth see Lubotzky, Phillips and Sarnak~\cite{MR963118}. 

\begin{corollary}\label{rrr:cor}
For $\a\in\{3,4,\dots\}$ we have that
$b(\a)\leq\frac{\a-1}{\a(\a-2)}$ and, thus, $a(\a)\leq \frac{2(\a-1)}{\a(\a-2)}$.
\end{corollary}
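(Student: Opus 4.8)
The plan is to apply Theorem~\ref{t:main-rooted-rrg-lemma} directly, with the sequence $(G_n)$ taken to be $\a$-regular graphs of growing girth. Concretely, for a fixed integer $\a\ge 3$, I would first invoke the existence of a sequence of $\a$-regular graphs $G_n$ on $n$ vertices whose girth tends to infinity; the excerpt already supplies this, either via the probabilistic fact that a random $\a$-regular graph has girth at least $\ell$ with positive probability (Bollob\'as~\cite{MR1864966}) or via the explicit Ramanujan constructions of Lubotzky, Phillips and Sarnak~\cite{MR963118}. Choosing the girth to be at least $2\ell(n)+2$ with $\ell(n)\to\infty$ meets the hypothesis of the theorem, and the average degree is exactly $\a$.

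The second step is to compute the local resistance $R(x,T_x)$, where $T_x$ is the ball of radius $\ell$ around $x$. By the girth condition this ball is a tree, and because $G_n$ is $\a$-regular it is precisely the truncation to depth $\ell$ of the infinite $\a$-regular tree. This is exactly the quantity estimated in the heuristic in Section~\ref{s:easy-facts}: identifying vertices at equal depth (they share a potential in the flow to the leaves), the resistance is $\sum_{k=0}^{\ell-1}\frac1{\a(\a-1)^k}$, which as $\ell\to\infty$ increases to $\frac{\a-1}{\a(\a-2)}$. Since every vertex $x$ is at the centre of an identical regular tree, $R(x,T_x)$ is the same for all $x$, so the average $\frac{1}{|G_n|}\sum_x R(x,T_x)$ is just this common value, at most $\frac{\a-1}{\a(\a-2)}$.

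The third step is purely bookkeeping: given $\eps>0$, Theorem~\ref{t:main-rooted-rrg-lemma} produces rooted graphs $G_n'$ of average degree at most $\a+\eps$ with $B(G_n')\le (1+\eps)\,\frac{\a-1}{\a(\a-2)}$ for all large $n$. Hence $b_n(\a+\eps)\le(1+\eps)\frac{\a-1}{\a(\a-2)}$, and taking $n\to\infty$ gives $b(\a+\eps)\le(1+\eps)\frac{\a-1}{\a(\a-2)}$. Letting $\eps\to0$ and using the continuity of $b$ established in Lemma~\ref{convex:lem} yields $b(\a)\le\frac{\a-1}{\a(\a-2)}$. The bound on $a(\a)$ then follows immediately from $a(\a)=2b(\a)$ (Theorem~\ref{root=unroot:t}), giving $a(\a)\le\frac{2(\a-1)}{\a(\a-2)}$.

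There is no serious obstacle here, since all the real work has been absorbed into Theorem~\ref{t:main-rooted-rrg-lemma}; the corollary is essentially an illustration of that theorem. The only point requiring a little care is the order of limits: one must send $n\to\infty$ first (for fixed $\eps$) to pass to $b(\a+\eps)$, and only then let $\eps\to0$, invoking continuity of $b$ at $\a$ to absorb the perturbation in the average degree. I would also note explicitly that the truncated-tree resistance is monotone increasing in $\ell$ and bounded above by its limit $\frac{\a-1}{\a(\a-2)}$, so the finite-girth value of $R(x,T_x)$ never exceeds the target bound and the estimate is clean for every~$n$.
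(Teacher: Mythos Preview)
Your proposal is correct and follows essentially the same route as the paper's proof: apply Theorem~\ref{t:main-rooted-rrg-lemma} to a sequence of $\a$-regular graphs of growing girth, observe that every local tree $T_x$ is the same truncated $\a$-regular tree with $R(x,T_x)\le\frac{\a-1}{\a(\a-2)}$, and then pass to the limit $\eps\to0$ via the continuity of $b$ from Lemma~\ref{convex:lem}, concluding with Theorem~\ref{root=unroot:t} for $a$. Your treatment of the order of limits and the monotonicity of the truncated-tree resistance is, if anything, slightly more explicit than the paper's.
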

\begin{proof}
  Fix $\eps>0$ and let $G_n$ be an $\a$-regular graph of girth tending
  to infinity with $n$, and form the graph~$G'$ as in
  Theorem~\ref{t:main-rooted-rrg-lemma}. In a regular graph all the
  trees $T_x$ are the same and have resistance less than the
  corresponding infinite tree: i.e., $R(T_x)\le
  \frac{\a-1}{\a(\a-2)}$. Thus $G'$ has average degree at most
  $\a+\eps$ and average resistance at most $
  \frac{\a-1}{\a(\a-2)}+o(1)$.

  Hence $b(\a+\eps)\leq\frac{\a-1}{\a(\a-2)}$ for all $\eps>0$ and the
  continuity of $b$ (Lemma~\ref{convex:lem}) shows $b(\a)\le
  \frac{\a-1}{\a(\a-2)}$.  This, together with
  Theorem~\ref{root=unroot:t}, implies the bound for~$a$.
\end{proof}

As we noted earlier, this bound can also be proved more directly by eigenvalue arguments. In the next section we will prove Theorem~\ref{t:alpha=3} using a similar application of Theorem~\ref{t:main-rooted-rrg-lemma} to non-regular graphs where
eigenvalue methods are not enough.

\subsection{Proof of Theorem~\ref{t:alpha=3}}

Recall that our aim is to construct a graph $G$ with $3<\a<4$ and $B(G)$ only a little larger than $\frac{(\a-1)}{\a(\a-2)}$. 

For non-integer values of $\a$ we certainly have (using Lemma
\ref{convex:lem} again) that the convex hull of the points
$\{(\a,\frac{(\a-1)}{\a(\a-2)}):\a=3,4,\dots \}$ is an upper bound for
$r(\a)$. This corresponds to taking a union of a rooted random
$\lfloor\a\rfloor$-regular graph and a rooted random
$\lceil\a\rceil$-regular graph which intersect only in the root
vertex. However, as we shall see, if we take a high girth graph of
average degree $\a$ in which all vertices have degree
$\lfloor\a\rfloor$ or $\lceil\a\rceil$, then the average rooted
resistance of a rooting of this graph is slightly lower. Hence, it is
plausible that by taking such a rooting and adding some leaves (i.e.,
taking the convex hull in~$b$) we would get an improved bound for
$3$ and, indeed, this is the case.

By taking care that the vertices of different degrees are suitably
distributed in the graph we can do even better. A simple example of
such a construction is a high girth bipartite graph with bipartition into a
part containing $\frac{3n}{7}$ vertices of degree 4 and a part
containing $\frac{4n}{7}$ vertices of degree 3. This graph has average
degree $\frac{24}{7}$ and has 2 kinds of trees $T_x$ depending on
whether $x$ has degree 3 or degree 4. Calculating the resistance of
these trees and applying Theorem~\ref{t:main-rooted-rrg-lemma} yields
a bound of $B(G)=\frac{209}{420}+o(1)\approx0.4976$. We omit the
details since the next example is better for our constructions and
involves very similar calculations

Let $G_0$ be a high girth 4-regular bipartite graph. It is clear that
such graphs exist since, for example, a random 4-regular bipartite
graph has a positive chance of having girth greater than any fixed
size. This is very similar to the results of random regular graphs
discussed above: see~\cite{MR1725006} for full details. 

Then form $G$ by replacing each vertex in one partition by two
vertices joined by an edge, with the four neighbours being shared two
to each new vertex. Then $G$ has average degree $\frac{10}{3}$. We
calculate the `resistance to infinity' in the related tree: that is in
the tree obtained from a 4-regular tree by replacing all vertices at
even distance from some designated vertex with two vertices of degree
3. Let $x$ be the resistance to infinity in this tree from a degree 3
vertex along one of the edges to a degree 4 vertex, and let $y$ be the
resistance to infinity from a degree 4 vertex along any one of its
edges. Then, by a simple application of the series and parallel laws, we have
\[
x=1+\frac y3 \qquad\text{and}\qquad y=1+\frac{1}{\frac 1x+\frac1{1+x/2}}.
\]
Solving this gives $x=\frac{1+\sqrt{5}}2$ and $y=\frac{-3+3\sqrt 5}2$,
which corresponds to a resistance to infinity from a degree 3 vertex
of $1/(2/x+1/(1+x/2))=\frac{\sqrt{5}}4$ and from a degree 4 vertex of
$y/4=\frac{-3+3\sqrt 5}8$. Thus, the average resistance to infinity is
$\frac{-3+7\sqrt5}{24}\approx 0.527186$.

By applying Theorem~\ref{t:main-rooted-rrg-lemma}, we can construct a
rooted graph with average degree $\frac{10}{3}+o(1)$ and average rooted
resistance $0.5271865>\frac{-3+7\sqrt5}{24}$. We have proved the following theorem which
includes Theorem~\ref{t:alpha=3} as a special case.
\begin{theorem}\label{mixed:thm}
For $\a\in[2,10/3]$ we have that $b(\a)\leq 1-0.3546(\a-2)$. In particular
  $b(3)\leq 0.6454$ and $a(3)\le 1.2908$.
\end{theorem}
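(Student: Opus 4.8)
The plan is to reduce everything to a single endpoint bound, namely $b(10/3)\le (-3+7\sqrt5)/24$, and then let the convexity of $b$ do the rest. For the endpoint I would feed the construction described just above into Theorem~\ref{t:main-rooted-rrg-lemma}. Concretely, take a sequence of high-girth $4$-regular bipartite graphs $G_0$ (these exist with girth tending to infinity, exactly as for ordinary random regular graphs) and split one side into adjacent pairs, producing graphs $G_n$ of average degree exactly $10/3$ in which every vertex has degree $3$ or $4$, in the proportion $2:1$. The girth hypothesis of Theorem~\ref{t:main-rooted-rrg-lemma} is satisfied, so for every $\eps>0$ and all large $n$ it yields a rooted graph $G'_n$ of average degree at most $10/3+\eps$ with $B(G'_n)\le \tfrac{1+\eps}{|G_n|}\sum_x R(x,T_x)$.

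Next I would bound the right-hand side by the average resistance to infinity in the limiting tree. The key point, exactly as in Corollary~\ref{rrr:cor}, is that each finite truncation satisfies $R(x,T_x)\le$ (resistance to infinity from $x$), since the truncated resistance is a partial sum of the convergent positive series giving the resistance to infinity. Using the series/parallel equations for $x$ and $y$ already solved above, the resistance to infinity is $\sqrt5/4$ from a degree-$3$ vertex and $(-3+3\sqrt5)/8$ from a degree-$4$ vertex; averaging with the weights $2/3$ and $1/3$ gives $(-3+7\sqrt5)/24$. Hence $b(10/3+\eps)\le (1+\eps)(-3+7\sqrt5)/24$ for every $\eps>0$, and letting $\eps\to0$ while invoking the continuity of $b$ (Lemma~\ref{convex:lem}) gives $b(10/3)\le (-3+7\sqrt5)/24 < 0.5271865$.

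To finish, I would combine this with $b(2)\le1$ (the star) via convexity. Since $b$ lies below the chord joining $(2,b(2))$ and $(10/3,b(10/3))$, and raising the endpoints to $(2,1)$ and $(10/3,0.5271865)$ only raises the chord, we obtain $b(\a)\le 1-c(\a-2)$ on $[2,10/3]$, where $c=(1-0.5271865)/(4/3)\approx0.354610$ is the magnitude of the chord's slope. Because $c>0.3546$ and $\a-2\ge0$, the stated line $1-0.3546(\a-2)$ dominates this chord throughout the interval, giving $b(\a)\le 1-0.3546(\a-2)$. Specialising to $\a=3$ yields $b(3)\le 0.6454$, and Theorem~\ref{root=unroot:t} (that $a=2b$) then gives $a(3)\le 1.2908$.

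The genuinely hard work—the random-rooting construction and its analysis—is already packaged inside Theorem~\ref{t:main-rooted-rrg-lemma}, so the only delicate point remaining is numerical. Both margins are razor thin: the target slope $0.3546$ rounds the true chord slope $\approx0.354610$, and $(-3+7\sqrt5)/24\approx0.5271865$ agrees with the quoted upper bound only in the sixth decimal place. I would therefore take care to verify that each rounding is conservative in the right direction, i.e.\ that $0.3546$ is genuinely below $c$ and that $(-3+7\sqrt5)/24$ is genuinely below $0.5271865$, since otherwise the convexity step would fail.
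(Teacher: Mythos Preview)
Your proposal is correct and follows essentially the same route as the paper: establish $b(10/3)\le(-3+7\sqrt5)/24<0.5271865$ via the split-bipartite construction fed into Theorem~\ref{t:main-rooted-rrg-lemma} (with a continuity step to pass from $10/3+\eps$ to $10/3$), then interpolate linearly to $b(2)=1$ using convexity. Your write-up is in fact more explicit than the paper's about the continuity argument and the numerical rounding checks; the only small imprecision is the phrase ``partial sum of the convergent positive series'' for $R(x,T_x)$, which is literally true only in the regular-tree case---here the correct justification is Rayleigh monotonicity (identifying the depth-$\ell$ boundary can only decrease resistance compared with letting current flow further out), but the inequality you need is unaffected.
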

\begin{proof}
  This follows instantly from convexity, $b(2)=1$ and the above
  construction showing that $b(10/3)\le 0.5271865$.
\end{proof}

\section{Lower Bounds}\label{s:lower}

In the introduction we proved that 
\[
A(G)\ge \frac{2}{(\frac 1n\sum_xd_x)}+O(m/n^2)
\]
which showed that $a(\alpha)\ge 2/\alpha$ or, equivalently, that $b(\a)\ge
1/\a$. The rough idea was to bound the resistance from a vertex to the
rest of the graph by the resistance to its neighbourhood. We improve
this bound by considering the resistance from a vertex to its two-step
neighbourhood. It is more convenient to work in the rooted case.

\begin{theorem}\label{t:2-step-lower}
For all $\a\ge 2$ we have  $b(\alpha)\ge 1/(\alpha-1)$.
\end{theorem}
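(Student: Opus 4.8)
The plan is to prove the uniform bound $B(G)\ge 1/(\a-1)$ for \emph{every} graph $G\in\cB(n,\a)$ with $\a\ge 2$; since this then holds for the minimiser it gives $b_n(\a)\ge 1/(\a-1)$ for all $n$, and hence $b(\a)\ge 1/(\a-1)$ by Lemma~\ref{limit:lem}. (If $G$ is disconnected then $B(G)=\infty$ and there is nothing to prove.) The whole argument is carried out using the electrical potential of the unit current flow from a non-root vertex to the root, and the ``two-step'' improvement over the trivial bound $b(\a)\ge 1/\a$ comes from feeding each vertex's defining equation into those of its neighbours.

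First I would establish a local lower bound on $R_{\rho x}$. Fix $x\ne\rho$ and let $\phi$ be the potential of the unit current flow from $x$ to $\rho$, normalised so that $\phi(\rho)=0$; then $R_{\rho x}=\phi(x)$, and by the maximum principle $0\le\phi(v)\le\phi(x)$ for every vertex $v$. Kirchhoff's law at the source $x$ reads $d_x\phi(x)-\sum_{y\sim x}\phi(y)=1$. For each non-root neighbour $y$ of $x$, harmonicity at $y$ together with $\phi\ge 0$ gives $\phi(y)\ge\phi(x)/d_y$ (the edge to $x$ already forces this, the other contributions being nonnegative). Substituting, and using $\phi(\rho)=0$ for any root-neighbour terms, yields
\[
1=d_x\phi(x)-\sum_{y\sim x}\phi(y)\le\phi(x)\bigl(d_x-\sigma_x\bigr),\qquad \sigma_x:=\sum_{\substack{y\sim x\\ y\ne\rho}}\frac1{d_y},
\]
so that $R_{\rho x}=\phi(x)\ge 1/(d_x-\sigma_x)$. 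Connectivity guarantees that each $x$ has a neighbour of degree at least $2$, so $d_x-\sigma_x>0$.

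The averaging step is where the choice of bound pays off. By the Cauchy--Schwarz inequality,
\[
\sum_{x\ne\rho}\frac1{d_x-\sigma_x}\ge\frac{n^2}{\sum_{x\ne\rho}(d_x-\sigma_x)}.
\]
The denominator is controlled by double counting: writing $m_y$ for the number of edges from $y$ to $\rho$, one has $\sum_{x\ne\rho}\sigma_x=\sum_{y\ne\rho}\tfrac1{d_y}(d_y-m_y)=n-\sum_{y\ne\rho}m_y/d_y$, while $\sum_{x\ne\rho}d_x=2e(G)-d_\rho\le\a n-d_\rho$. Hence $\sum_{x\ne\rho}(d_x-\sigma_x)\le n(\a-1)-\bigl(d_\rho-\sum_y m_y/d_y\bigr)\le n(\a-1)$, the subtracted term being nonnegative. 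Therefore $\rtot\ge n/(\a-1)$ and $B(G)=\rtot/n\ge 1/(\a-1)$, as required.

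The part needing genuine care, and the reason the theorem is not completely routine, is the choice of the \emph{right} two-step bound. The obvious Nash--Williams bound, using the edges at $x$ and the edges leaving the closed neighbourhood of $x$, gives $R_{\rho x}\ge 1/d_x+1/\sum_{y\sim x}(d_y-1)$; this agrees with the potential bound for regular graphs but does \emph{not} average cleanly, because the coupling between a vertex's degree and its neighbours' degrees (a friendship-paradox effect) defeats a naive convexity argument on the second term. The potential bound $1/(d_x-\sigma_x)$ is instead engineered so that the awkward neighbour-degree sum collapses, since $\sum_{x}\sum_{y\sim x}1/d_y=n$ exactly, after which a single application of Cauchy--Schwarz finishes the job. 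A pleasant by-product is that the root's edges only help---the term $d_\rho-\sum_y m_y/d_y\ge 0$ strengthens the bound---so the star, where the inequality is tight at $\a=2$, is handled with no special casing.
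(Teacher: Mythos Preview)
Your proof is correct and arrives at exactly the same local bound as the paper: writing $\sigma_x=\sum_{y\sim x,\,y\ne\rho}1/d_y$, both you and the paper show $R_{x\rho}\ge 1/(d_x-\sigma_x)$, then sum $d_x-\sigma_x$ over non-root vertices by the same double count to get $\le(\a-1)n$, and finish with AM--HM (your Cauchy--Schwarz). The only genuine difference is in how the local bound is obtained. The paper shorts all vertices at distance $\ge 2$ from $x$ to the root and reads off an upper bound on the two-step conductance, namely $\sum_{y\in\Gamma'(x)}(d_y-1)/d_y+e_x=d_x-\sigma_x$; you instead use harmonicity of the potential at each neighbour $y$ to get $\phi(y)\ge\phi(x)/d_y$ and substitute into Kirchhoff's law at $x$. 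These are two standard routes to the same inequality.

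One small advantage of your route: it applies verbatim to any connected $G\in\cB(n,\a)$, whereas the paper's formulation $\bigl(1+\tfrac{1}{d_y-1}\bigr)^{-1}$ requires $d_y\ge 2$ and so invokes optimality to exclude non-root leaves; your potential argument handles leaves automatically (a leaf $y$ gives $\phi(y)=\phi(x)$ and contributes $1$ to $\sigma_x$, consistently). A minor quibble: your sentence ``connectivity guarantees that each $x$ has a neighbour of degree at least $2$'' is not literally the right reason when $x$ is a leaf on the root, but the conclusion $d_x-\sigma_x>0$ is immediate from $\phi(x)(d_x-\sigma_x)\ge 1$ with $\phi(x)<\infty$, so nothing is lost.
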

\begin{proof}Let $G$ be an optimal graph and let $G_x$ be the graph
  formed from $G$ by identifying all vertices not in $x\cup \Gamma(x)$
  (i.e., all vertices at distance at least two from $x$) to the root
  vertex. This decreases the resistance from $x$ to the root. We call
  the resistance from $x$ to the root in $G_x$ the \emph{two-step
    resistance} of $x$ and similarly for the \emph{two-step
    conductance}.

  Let $d_x$ be the degree of a vertex and let $e_x$ be the number of
  those edges which join it to the root. Thus
  \[
  \sum_{x\not=\rho}(d_x+e_x)=\alpha n
  \]
  Let $\Gamma'$ denote the neighbourhood of $x$ viewed as a
  multiset and excluding the root vertex. Thus
  $|\Gamma'(x)|=d_x-e_x$. Note, that for any (non-root) vertex $x$ and
  vertex $y\in\Gamma'(x)$ we have $d_y\geq2$: indeed, otherwise $y$
  would be a leaf vertex not joined to the root contradicting the
  assumed optimality of $G$.

  Having set up the notation observe that the two-step conductance
  from a vertex $x$ is at most
  \[
  \sum_{y\in\Gamma'(x)}\left(1+\frac1{d_y-1}\right)^{-1}+e_x.
  \]
  To complete the proof we sum this bound over all vertices and
  simplify.  To reduce the notation all vertex sums are over all vertices
  except the root.  The sum of the two-step conductances is at
  most
\begin{align*}
  \sum_x\left(\sum_{y\in\Gamma'(x)}\left(1+\frac1{d_y-1}\right)^{-1}+e_x\right)
  &=\sum_x\sum_{y\in\Gamma'(x)}\frac{d_y-1}{d_y}+\sum_x e_x\nonumber\\
  &=\sum_x\sum_{y\in\Gamma'(x)}1-\sum_x\sum_{y\in\Gamma'(x)}\frac{1}{d_y}+\sum_x e_x\nonumber\\
  &=\sum_xd_x-\sum_y\sum_{x\in\Gamma'(y)}\frac{1}{d_y}\nonumber\\
  &=\sum_xd_x-\sum_y \frac{d_y-e_y}{d_y}\nonumber\\
  &=\sum_x(d_x+e_x)-\sum_y \frac{d_y-e_y}{d_y}-\sum_ye_y\nonumber\\
  &=\sum_x(d_x+e_x)-\sum_y1 + \sum_y \frac{e_y}{d_y}-\sum_ye_y\label{e:two-stage-line}\\
  &\le\alpha n-n=(\alpha-1)n.\nonumber
\end{align*}
Hence the average two-step conductance is at most $\alpha-1$ and,
thus, the average two-step resistance is at least $1/(\a-1)$.
\end{proof}
We might expect that, with a bit more ingenuity and algebraic
manipulation, we could extend the above to 3 and more steps. This does
not appear to be easy and, indeed, the natural analogue of
Theorem~\ref{t:2-step-lower} for five or more steps would contradict
Theorem~\ref{mixed:thm}, so cannot possibly work.

We see that the lower bound for $\alpha=2$ now matches the star
construction. Recall from Lemma~\ref{convex:lem} that we needed this
result to prove the continuity of $b$ (and thus $a$) at 2.

\section{The Structure of Optimal Graphs}\label{s:optimal-graphs}

In this section we prove some results describing what optimal graphs
look like. One particular aim is to show that, as $\a$ increases, the
graph changes from `star-like' to `regular-like'.

First we show that for large $\a$ no optimal graph, in either the rooted or unrooted case,
has a significant number of leaves. This is a simple application of the
convexity of $b$.

\begin{lemma}\label{l:no-leaves}
  Suppose that $\a\ge 3.83$ and that $G$ is any graph with average degree
  $\a$ and $\gamma n$ leaves. Then $A(G)-a(\a)\ge\gamma/100+o(1)$. In
  particular the optimal graph has $o(n)$ leaves.
\end{lemma}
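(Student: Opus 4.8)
The plan is to exploit convexity of $b$ (Lemma~\ref{convex:lem}) together with the equivalence $a(\a)=2b(\a)$ (Theorem~\ref{root=unroot:t}) to show that devoting a fraction $\gamma$ of the vertex budget to leaves is strictly wasteful once $\a$ is large enough. The key observation is that a graph with $\gamma n$ leaves can be viewed as a convex combination: the leaves are essentially a ``star component'' contributing average degree near $2$, while the remaining $(1-\gamma)n$ vertices carry the rest of the edge budget at a higher average degree. Since each leaf has resistance at least $1$ to anything, leaves are expensive, and convexity will quantify exactly how expensive.

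First I would pass to the rooted model, which suffices by Theorem~\ref{root=unroot:t}: it is enough to show $b(\a)$ is strictly smaller than the rooted resistance forced by having $\gamma n$ leaves. A graph with $\gamma n$ leaves and average degree $\a$ splits its edges so that the $\gamma n$ leaves account for average degree (contribution) roughly $2\gamma$ worth of the budget near the low end, leaving the non-leaf part with some effective average degree $\b > \a$. Writing $\a = \gamma\cdot 2 + (1-\gamma)\b$ (approximately, after accounting for the leaf edges), convexity of $b$ gives
\[
b(\a) \le \gamma\, b(2) + (1-\gamma)\, b(\b) = \gamma + (1-\gamma)\, b(\b).
\]
Conversely, any graph $G$ realising $\gamma n$ leaves has its rooted resistance bounded \emph{below} by the corresponding convex combination where the leaves each contribute resistance~$1$; the gap between the true optimum $b(\a)$ and this leafy value is what I want to lower-bound by $\gamma/100$.

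The heart of the argument is therefore a quantitative convexity gap: I would compare the ``leafy'' value $\gamma + (1-\gamma)b(\b)$ against the genuinely optimal $b(\a)$ and show the difference is at least (a constant times) $\gamma$. Because $b$ is convex and strictly decreasing near $\a\approx 3.83$ with a definite slope there, moving mass from the steep low-degree regime (leaves at $\a=2$, where $b=1$) into the flatter high-degree regime produces a first-order loss proportional to $\gamma$ times the difference in one-sided slopes of $b$. Concretely, using the rigorous upper bounds already available --- $b(2)=1$, the mixed construction $b(10/3)\le 0.5271865$ (Theorem~\ref{mixed:thm}), and the random-regular bounds $b(\a)\le \frac{\a-1}{\a(\a-2)}$ (Corollary~\ref{rrr:cor}) --- together with the lower bound $b(\a)\ge 1/(\a-1)$ (Theorem~\ref{t:2-step-lower}), one can pin the relevant slopes and secondary terms tightly enough to extract the explicit constant $1/100$. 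The threshold $3.83$ is presumably exactly where this gap first becomes positive for all $\gamma\in(0,1]$, so the numerics must be checked at the boundary.

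The main obstacle I expect is the bookkeeping of the edge budget: the $\gamma n$ leaves consume $\gamma n$ edges, so the non-leaf part has average degree $\b = (\a - 2\gamma)/(1-\gamma)$ rather than a free parameter, and I must verify that the convexity inequality, evaluated at this constrained $\b$ and combined with the known numerical bounds on $b$, yields a clean linear-in-$\gamma$ lower bound with constant $1/100$ uniformly for $\a\ge 3.83$ and all $\gamma$. The $o(1)$ error absorbs the discrepancy between $A$ and $2B$ and between $a_n$ and its limit, so it does not affect the leading constant. Once the boundary case $\a=3.83$ is verified, monotonicity of the relevant slope gap in $\a$ gives the result for all larger $\a$, and the final sentence (``$o(n)$ leaves'') follows since $\gamma/100 = o(1)$ forces $\gamma=o(1)$.
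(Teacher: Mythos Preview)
Your proposal is correct and follows essentially the same route as the paper: pass to the rooted model, split off the leaves as a ``star component'' with $B=1$, bound the rest below by $b(\b)$ with $\b=(\a-2\gamma)/(1-\gamma)\ge\a$, and use convexity of $b$ together with the numerical data at $2$, $10/3$, and $\a$ to extract a linear-in-$\gamma$ gap. The paper makes your ``slope gap'' step concrete by introducing the secant line $l$ through $(10/3,\,0.527186)$ and $(\a,\,b(\a))$: convexity gives $b\ge l$ on $[\a,\infty)$, a short calculation using $b(\a)\ge 1/(\a-1)$ gives $l(2)\le 0.995$, and then linearity yields $\gamma\cdot 1 + (1-\gamma)b(\b)\ge \gamma l(2)+(1-\gamma)l(\b)+\gamma(1-l(2)) = l(\a)+\gamma/200 = b(\a)+\gamma/200$, which is exactly the quantitative convexity inequality you were aiming for.
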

\begin{proof}
  We start by proving a similar result for the rooted case.
  Trivially, $b(2)=1$, and, by Theorem~\ref{mixed:thm}, $b(10/3)\le
  0.527186$. Also, by Theorem~\ref{t:2-step-lower},
  $b(\a)>1/(\a-1)$. Let $l=l(x)$ be the function giving the line
  through the points $(10/3,0.527186)$ and $(\a,b(\a))$, and $l_0$ the
  line through $(10/3,0.527186)$ and $(3.83,1/2.83)$.  Since $b$
  is convex and $\a\ge 3.83>10/3$ we see that for all $x\ge \a$ we have
  $b(x)\ge l(x)$. Note that, $l_0(2)\le 0.995$ so, again by
  convexity, $l(2)\le 0.995$.

  Split $G$ into two subgraphs disjoint except for the root: the
  leaves $L$, and the rest of the graph $H$. The average degree
  of $G$ satisfies
  \[\a(G)=\gamma\alpha(L)+(1-\gamma)\alpha(H)=2\gamma
  +(1-\gamma)\alpha(H),\] and, in particular, $\a(H)\ge \a(G)$.

  Turning to resistances, we have
  \begin{align*}
    B(G)&=\gamma B(L)+(1-\gamma) B(H)\\
    &\ge \gamma+(1-\gamma)b(\alpha(H))\\
    &\ge \gamma l(2)+(1-\gamma)l(\alpha(H))+(1-0.995)\gamma\\
    &=l(\a(G))+\gamma/200\\
    &=b(\a(G))+\gamma/200.
  \end{align*}
  
  Finally, we lift this result back to the unrooted case. Let $G'$ be
  a rooting of the graph $G$ with $2 B(G')\le A(G)+o(1)$ and
  $\a(G')\le \a(G)+o(1)$. Since the rooting process only added $o(n)$
  edges to $G$ we see that $G'$ has $\gamma n-o(n)$ leaves.

  Thus 
  \begin{align*}
  A(G)&\ge 2 B(G')+o(1)\\
  &\ge 2b(\alpha(G'))+\gamma/100+o(1)\\
  &\ge 2b(\alpha(G))+\gamma/100+o(1)\\
  &=a(\alpha(G))+\gamma/100+o(1)
\end{align*}
as required.
\end{proof}
We remark that, if we just used the simple bound $b(3)\le 2/3$ given
by a star of triangles, then we would prove a similar result for the
slightly weaker case when $\a\ge 4$.

\medskip

Next we show that, if $\a$ is not much greater than 2, then
the optimal graph must have leaves. 

\begin{theorem}\label{leaves:thm-better}
  Let $G$ be a graph on $n$ vertices with average degree
  $\alpha=2\frac14-\eps$, for some $\eps>0$, and $\gamma n$ leaves.
  Then $A(G)-a(\a)\ge \frac{2}{25}(\eps-\gamma/4)+o(1)$.  In
  particular, the optimal such graph must contain at least $4\eps
  n-o(n)$ leaves.
\end{theorem}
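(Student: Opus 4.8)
The plan is to mirror the structure of Lemma~\ref{l:no-leaves} but to run the convexity argument in the opposite direction, exploiting that near $\a=2$ the optimal curve $b$ lies \emph{strictly above} the line joining the leaf point to a nearby interior point, so that having \emph{too few} leaves is now the penalised configuration. First I would split $G$ into the leaves $L$ (a star, contributing average degree $2$ and rooted resistance $1$) and the remaining graph $H$, with $\a(G)=2\gamma+(1-\gamma)\a(H)$, exactly as before. The key quantitative input is a good lower bound on $b$ on an interval to the right of $2$: I would use Theorem~\ref{t:2-step-lower}, which gives $b(\a)\ge 1/(\a-1)$, to control $b(\a(H))$ from below, while noting $b(2)=1$ from the star. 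Since $\a=2\tfrac14-\eps<2\tfrac14$, the relevant values of $\a(H)$ sit in a narrow window near $2\tfrac14$, where $1/(\a-1)$ is close to $4/5$ and, crucially, lies strictly above the straight interpolation that a leafless construction would achieve.

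The engine of the argument is the following convexity comparison. Let $l$ be the line through $(2,1)$ and $(2\tfrac14,\,1/(2\tfrac14-1))=(2\tfrac14,4/5)$; this line has slope $-4/5$. Writing $B(G)=\gamma B(L)+(1-\gamma)B(H)\ge \gamma\cdot 1+(1-\gamma)b(\a(H))$ and bounding $b(\a(H))\ge 1/(\a(H)-1)$, I would compare the resulting expression against the value of the optimal curve at $\a(G)$. The gain comes from the fact that the lower envelope $1/(\a-1)$ is convex and strictly curved, so mixing in the leaf point $(2,1)$ is \emph{worse} than the optimal curve by an amount proportional to how far $\gamma$ falls short of the ``correct'' leaf fraction $4\eps$. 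Concretely I expect to show $B(G)\ge b(\a(G))+c(\eps-\gamma/4)+o(1)$ for an explicit constant $c$, where the $\gamma/4$ records that each unit of leaf-deficit costs a fixed amount of excess resistance; tracking the constants through the convexity inequality should yield $c=1/25$ in the rooted setting.

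Finally I would lift from the rooted bound to the unrooted statement precisely as in Lemma~\ref{l:no-leaves}: take a rooting $G'$ of $G$ with $2B(G')\le A(G)+o(1)$ and $\a(G')\le \a(G)+o(1)$ (Theorems~\ref{root-unroot:t} and~\ref{root=unroot:t}), observe that the rooting adds only $o(n)$ edges and hence preserves the leaf fraction up to $o(1)$, and then apply the rooted inequality to $G'$. Doubling turns the rooted constant $1/25$ into $2/25$ and $b$ into $a$, giving $A(G)-a(\a)\ge \frac{2}{25}(\eps-\gamma/4)+o(1)$. The ``in particular'' clause is immediate: if $\gamma<4\eps$ the right-hand side is bounded below by a positive constant, so $G$ cannot be optimal, forcing at least $4\eps n-o(n)$ leaves.

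I expect the main obstacle to be the quantitative comparison in the second step rather than any conceptual difficulty. Because I am forced to use the \emph{lower} bound $1/(\a-1)$ for $b$ (we have no matching upper bound near $2\tfrac14$), I must verify that this bound is already strong enough to beat the leafless interpolation with room to spare for the constant $\frac{2}{25}$; this requires checking that the chord from $(2,1)$ genuinely lies below $1/(\a-1)$ throughout the window and pinning down the slope and gap carefully, since the margin near $\a=2\tfrac14$ is not large. Choosing the comparison point at exactly $\a=2\tfrac14$ (where $1/(\a-1)=4/5$ is clean) and the threshold $2\tfrac14-\eps$ is presumably what makes the arithmetic close, and the $\eps-\gamma/4$ form of the answer strongly suggests the constants are meant to be read off from the slope $-4/5$ of this particular chord.
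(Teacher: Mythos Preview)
Your approach has a genuine gap at its core. The split $B(G)\ge \gamma\cdot 1+(1-\gamma)\,b(\a(H))$ followed by $b(\a(H))\ge 1/(\a(H)-1)$ cannot produce a positive lower bound on $B(G)-b(\a)$ in the regime you care about. When $\gamma=0$ your bound collapses to $B(G)\ge 1/(\a-1)$, which is exactly Theorem~\ref{t:2-step-lower} applied to $G$ itself; since $b(\a)\ge 1/(\a-1)$ as well, you obtain $B(G)-b(\a)\ge 1/(\a-1)-b(\a)\le 0$, i.e.\ nothing. More generally, convexity of $t\mapsto 1/(t-1)$ makes the mixed expression $\gamma+(1-\gamma)/(\a(H)-1)$ \emph{larger} as $\gamma$ grows (the Jensen gap widens), so your method yields its weakest estimate precisely when there are few leaves --- the opposite of what the theorem asserts. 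The two-step bound holds for all graphs, leafless ones included; applying it to $H$ gives no information about $H$ beyond what it already gives about $G$.

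The paper's proof is not a convexity argument at all. It is an \emph{improvement} argument: Lemma~\ref{l:weight-resistance} locates, in any leafless rooted graph of average degree below $9/4$, a degree-$2$ vertex $x$ with both neighbours of degree $2$, and shows that contracting an edge at $x$ and re-attaching a leaf to the root drops $\rtot$ by at least $1/100$. Iterating (Lemma~\ref{l:leaves-rooted}) turns a graph with $k$ leaves into one with $\ell=9n-8m$ leaves while decreasing $\rtot$ by at least $(\ell-k)/100$. Comparing the improved graph to the optimum then yields the stated gap. The key missing ingredient in your plan is any mechanism that distinguishes leafless graphs from general graphs \emph{beyond} the universal bound $1/(\a-1)$; the paper supplies this via the contract-and-add-leaf operation rather than via bounds on $b$.
\medskip
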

\begin{remark}
  The bound of $\a<2\frac14$ can be improved significantly but the
  important point for our purposes is that the bound is strictly
  greater than 2.  We discuss the limits of the basic technique after
  the proof.
\end{remark}

Roughly, our aim is to show that there is a vertex $x$ with resistance
to the root at least $1$; then we can contract an edge incident to $x$
reducing the total resistance by 1 and add a leaf.

We will be considering a number of modified graphs. Let $G$ be the
original graph and suppose that $x$ is a degree-2 vertex with
neighbours $y_1$ and $y_2$. Let $G\setminus x$ denote the graph with
vertex $x$ deleted; $G/x$ denote the graph with the edge $y_1x$
contracted (i.e., $G\setminus x$ with the edge $y_1y_2$
added). Finally, let $G^+/x$ denote $G/x$ with an additional leaf
added to the root so, in particular, $G^+/x$ has the same number of
vertices and edges as $G$.

\begin{lemma}\label{l:weight-resistance}
  Let $G$ be a rooted graph with no leaves and average degree less
  than $9/4$. Then  there exists a vertex $x$ with
  $\rtot(G)-\rtot(G^+/x)\ge \frac1{100}$.
\end{lemma}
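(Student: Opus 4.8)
The plan is to reduce the statement to a purely electrical claim and then to exhibit the required vertex. First I would record the effect of the operation $G\mapsto G^+/x$ on $\rtot$. For a degree-$2$ vertex $x$ with neighbours $y_1,y_2$, contracting the edge $y_1x$ is a shorting of the network, so by Rayleigh monotonicity it can only \emph{decrease} $R_{\rho v}$ for every surviving vertex $v$; deleting $x$ removes its contribution $R_{\rho x}(G)$; and the extra leaf adjacent to $\rho$ contributes exactly $1$ (adding a pendant edge at the root leaves all other resistances unchanged). Hence
\[
\rtot(G)-\rtot(G^+/x)=R_{\rho x}(G)-1+\sum_{v\ne x}\bigl(R_{\rho v}(G)-R_{\rho v}(G/x)\bigr)\ \ge\ R_{\rho x}(G)-1,
\]
the final sum being a sum of nonnegative terms. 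Thus it suffices to produce a single degree-$2$ vertex $x$ with $R_{\rho x}(G)\ge 1+\tfrac1{100}$.

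To locate such a vertex I would first use the two hypotheses to show that degree-$2$ vertices dominate. Writing $n_2$ and $n_{\ge3}$ for the numbers of non-root vertices of degree exactly $2$ and at least $3$, the no-leaves assumption gives $\sum_{v\ne\rho}d_v\ge 2n_2+3n_{\ge3}$, while the degree bound gives $\sum_v d_v=2e(G)<\tfrac94(n+1)$ with $n=n_2+n_{\ge3}$; comparing these yields $n_{\ge3}<\tfrac13 n_2+O(1)$. So the degree-$2$ vertices organise into induced paths joined at the comparatively few vertices of degree at least $3$ (and the root), and the total length of these paths is large relative to their number.

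For the resistance lower bound I would again use that identifying vertices only decreases effective resistance. Shorting everything outside a maximal degree-$2$ path to the root turns that path into an arc of a cycle through $\rho$, so a vertex at distance $i$ along a path of length $L$ satisfies $R_{\rho x}(G)\ge \tfrac{i(L+1-i)}{L+1}$; taking $x$ near the midpoint this exceeds $1+\tfrac1{100}$ as soon as $L\ge 4$. Combined with the counting above, this settles the lemma whenever some maximal degree-$2$ path is long, since then the midpoint vertex already gives gain at least $\tfrac15>\tfrac1{100}$.

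The main obstacle is the short-path regime, which is genuinely delicate: a graph in which every maximal degree-$2$ path has length at most $3$ — for instance a $3$-fold subdivision of a cubic expander — can still have average degree below $9/4$, and there the crude shorting bound gives only $R_{\rho x}\ge 1$ with no room to spare (a degree-$2$ vertex with two degree-$2$ neighbours has two-step resistance exactly $1$). Handling this case seems to require retaining the nonnegative contraction terms discarded in the first step — this is presumably the point of the lemma's name, the ``weight'' being the current that the other vertices drive through $x$ — or a sharper shorting that looks three or more steps out from $x$ and invokes the global bound $B(G)\ge 1/(\a-1)$ of Theorem~\ref{t:2-step-lower} to push the resistance of a typical deep degree-$2$ vertex strictly above $1$. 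I expect the bulk of the work, and the origin of the specific constants $9/4$ and $\tfrac1{100}$, to lie in this quantitative estimate rather than in the reduction.
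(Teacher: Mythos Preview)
Your reduction in the first paragraph is correct, and your diagnosis at the end is exactly right: in the short-path regime the gain must come from the contraction terms you discarded, not from forcing $R_{\rho x}(G)>1$. But the proposal stops precisely where the content of the lemma lies, so as written it is not a proof.

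Two remarks on what you have. First, your counter-scenario is essentially tight: a $3$-subdivision of a cubic graph has average degree \emph{exactly} $9/4$, and perturbing a few edges to length $4$ brings it strictly below $9/4$ while leaving almost every maximal degree-$2$ chain with only two or three internal vertices. Your cycle bound $i(L+1-i)/(L+1)$ gives at most $1$ in both those cases, so the path-length route really is a dead end, not just loose. Second, you are working harder than necessary on the combinatorics: you do not need a long chain, only a single degree-$2$ vertex $x$ whose two neighbours $y_1,y_2$ are also of degree $2$. Your count already gives this (if no such $x$ existed, every degree-$2$ vertex would send an edge to $V\setminus S$, forcing $e(G)\ge\tfrac32|S|\ge\tfrac98 n$).

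The missing step is a two-case split on the next layer. Let $z_i$ be the other neighbour of $y_i$ and set $r_i=R_{z_i\rho}(G\setminus\{x,y_1,y_2\})$. If some $r_i\ge\tfrac1{24}$, then a direct series--parallel estimate (short $\{z_1,z_2\}$ together but keep the extra $r_i$) already gives $R_{x\rho}(G)\ge\tfrac{98}{97}$, and your discarded inequality finishes. If instead both $r_i<\tfrac1{24}$, then $R_{x\rho}$ may be only $1$, but now look at $y_1$: shorting $V\setminus\{x,y_1,y_2\}$ to $\rho$ shows $R_{y_1\rho}(G)\ge\tfrac34$, while in $G/x$ the vertex $y_1$ acquires $y_2$ as a neighbour and one gets $R_{y_1\rho}(G/x)\le\tfrac23+\tfrac1{24}$. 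That single contraction term contributes $R_{y_1\rho}(G)-R_{y_1\rho}(G/x)\ge\tfrac1{24}>\tfrac1{100}$, and since $R_{x\rho}(G)\ge1$ you are done. No global input (in particular no appeal to Theorem~\ref{t:2-step-lower}) is needed; the whole argument lives in the $3$-neighbourhood of $x$.
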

\begin{proof}
  We claim that there is a degree-2 vertex which has both neighbours
  also degree-2. Indeed, suppose not.  Let $S$ be the set of vertices
  of degree $2$.  Since $G$ has no leaves, the sum of the degrees
  is at least $2|S|+3|V\setminus S|\le \frac94n$, so $|S|\ge
  3n/4$. Let $t$ be the number of edges from $S$ to $V\setminus
  S$. Since no vertex in $S$ has both neighbours in $S$ we have $t\ge
  |S|$. Thus, by considering the degrees in $G[S]$ we see that
  $2E(S)=2|S|-t$. The total number of edges in $G$ is at least
  \[
  t+|E(S)|=t/2+|S|\ge \frac32|S|\ge \frac98n
  \]
  which contradicts the average degree being less than $9/4$.

  Let $x$ be such a vertex, and $y_1,y_2$ its two neighbours, and
  $z_1,z_2$ their two neighbours (we do not exclude $z_1=z_2$). We
  immediately have $R_{x\rho}\ge 1$ but we need something slightly
  stronger. If either of $z_1,z_2$ has resistance to the root in
  $G\setminus \{x,y_1,y_2\}$ at least $\frac1{24}$ then
  $R_{x\rho}(G)\ge \frac{98}{97}$. Otherwise we consider the
  contribution to $\rtot$ of $y_1$.  We have $R_{y_1\rho}(G)\ge 3/4$
  and $R_{y_1\rho}(G/x)\le \frac1{24}+\frac23$. In either case we get
  that $\rtot(G)-\rtot(G^+/x)\ge 1/100$ giving
  the result.
\end{proof}

\begin{lemma}\label{l:leaves-rooted}
  Fix $n<m$ with $2m<9n/4$, and let
  $\ell=9n-8m$. Suppose that $G$ is a rooted graph
  with $n$ non-root vertices, $m$ edges, and $k\le \ell$ leaves. Then
  there exists a rooted graph $G'$ also with $n$ vertices and $m$
  edges and $\rtot(G')\le \rtot(G)-\frac1{100}(\ell-k)$.
\end{lemma}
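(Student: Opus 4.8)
The plan is to prove the bound by induction on the \emph{leaf deficit} $\ell-k$, at each step preserving $n$ and $m$, increasing the number of leaves by one, and decreasing $\rtot$ by at least $\frac1{100}$. When $k=\ell$ there is nothing to do, so take $G'=G$. For the inductive step suppose $k<\ell$. The move I would use is exactly the one underlying Lemmas~\ref{contract:lem} and~\ref{l:weight-resistance}: contract an edge incident to a suitable vertex $x$ and add a single leaf at the root. This leaves $n$ and $m$ unchanged and, provided $x$ is a degree-$2$ vertex not adjacent to a leaf, raises the leaf count by exactly one, giving a rooted graph $\tilde G$ with $n$ non-root vertices, $m$ edges and $k+1\le\ell$ leaves. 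If this move is guaranteed to satisfy $\rtot(\tilde G)\le\rtot(G)-\frac1{100}$, then applying the inductive hypothesis to $\tilde G$ (whose deficit is $\ell-k-1$) produces $G'$ with $\rtot(G')\le\rtot(\tilde G)-\frac1{100}(\ell-k-1)\le\rtot(G)-\frac1{100}(\ell-k)$, which is what we want. Since the hypothesis $2m<9n/4$ and the inequality $k<\ell$ are maintained throughout, the induction is clean.

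Everything therefore reduces to producing, at each stage with fewer than $\ell$ leaves, a move gaining at least $\frac1{100}$. Here the arithmetic of the hypotheses is the key link: deleting the $k$ leaves of $G$ leaves a graph on $n-k$ non-root vertices with $m-k$ edges, and a one-line computation shows that its average degree is below $9/4$ \emph{exactly} when $8m+k<9n$, i.e.\ when $k<\ell$. This is precisely the average-degree hypothesis of Lemma~\ref{l:weight-resistance}. I would aim to guarantee the move via one of two sufficient conditions. First, if $G$ itself contains a degree-$2$ vertex both of whose neighbours have degree $2$, then the (purely local) resistance estimate in the proof of Lemma~\ref{l:weight-resistance} gives the $\frac1{100}$ gain directly, and that estimate is unaffected by leaves elsewhere. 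Second, if instead some vertex has $R_{x\rho}\ge 1+\frac1{100}$, then contracting any incident edge and adding a leaf gains at least $R_{x\rho}-1\ge\frac1{100}$, which is the quantitative form of Lemma~\ref{contract:lem} (using that contraction does not increase any other resistance).

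The main obstacle, and the genuine crux, is showing that at least one of these two conditions always holds when $k<\ell$. The difficulty is the interaction of the counting with the leaves: deleting the $k$ leaves can create new degree-$1$ vertices, so the leaf-deleted graph need not be leaf-free, and passing to the $2$-core can raise its average degree above $9/4$, so Lemma~\ref{l:weight-resistance} cannot be quoted verbatim. The degree count of that lemma must be extended to allow both leaves and the root's own degree; I expect this to show that the absence of a three-in-a-row degree-$2$ configuration forces the degree-$2$ vertices to form essentially an independent set, which together with $2m<9n/4$ forces the non-leaf part of $G$ to funnel current through a poorly connected region and hence to contain a vertex of resistance bounded above $1$ by a definite amount (in the extreme case a root of small degree makes long series paths unavoidable). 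Verifying this dichotomy quantitatively, together with the bookkeeping that the chosen move really increases the leaf count (choosing $x$ to be a non-leaf vertex and not contracting toward a leaf), are the steps I expect to require the most care.
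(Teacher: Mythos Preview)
Your inductive framework (induct on the leaf deficit $\ell-k$, at each step contract an edge and add a leaf at the root to gain $\tfrac1{100}$) is exactly the paper's.  You also correctly spot the one real subtlety: after deleting the leaves, the graph $G_0=G[V\setminus L]$ need not itself be leaf-free, so Lemma~\ref{l:weight-resistance} is not literally applicable.  Where you diverge from the paper is in how you resolve this.  The paper does not attempt any dichotomy between ``three consecutive degree-$2$ vertices'' and ``a vertex with $R_{x\rho}\ge 1+\tfrac1{100}$''; it simply applies Lemma~\ref{l:weight-resistance} to $G_0$ directly and then lifts the conclusion to $G$ via $\rtot(G)-\rtot(G^+/x')=\rtot(G_0)-\rtot(G_0^+/x')$.

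The clean way to make this legitimate (and to bypass your dichotomy entirely) is a one-line preliminary reduction you did not consider: we may assume every leaf of $G$ is adjacent to the root.  Indeed, if a leaf $w$ has neighbour $u\ne\rho$, moving $w$ to the root keeps $n,m$ fixed, never increases $\rtot$, and either leaves $k$ unchanged or increases $k$ by one while dropping $\rtot$ by $R_{u\rho}\ge 1\ge\tfrac1{100}$ (since then $d_u=2$, so $u$ has only one useful edge toward~$\rho$); either outcome is absorbed by the induction.  Once all leaves sit at the root, every non-root vertex of $G_0$ has the same degree as in $G$, so $G_0$ is genuinely leaf-free, Lemma~\ref{l:weight-resistance} applies verbatim, and the vertex $x'$ it produces has degree $2$ in $G$ with degree-$2$ neighbours, so $G^+/x'$ really has $k+1$ leaves.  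Your second branch and the ``funnelling'' heuristic you sketch to prove it are therefore unnecessary; the argument you describe as the ``genuine crux'' dissolves under this reduction.
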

\begin{proof}
  We prove this by induction on $\ell-k$. It is trivial if $\ell-k=0$.
  Thus suppose that $\ell-k\ge 1$ and that the result holds for any
  graph with smaller `$\ell-k$'.

  Let $L$ be the set of vertices of degree one (i.e., the leaves), and
  let $G_0=G[V\setminus L]$. Then $G_0$ has $n-k$ non-root vertices
  and $m-k$ edges. Thus $G_0$ has average degree
  \[
  \frac{2(m-k)}{n-k}<
  \frac{2(m-\ell)}{n-\ell}=\frac{18(m-n)}{8(m-n)}=\frac94.
  \]
  Thus, by Lemma~\ref{l:weight-resistance}, there exists $x'$ with
  $\rtot(G_0)-\rtot(G_0^+/x')\ge \frac1{100}$. Trivially, this implies
  that $\rtot(G)-\rtot(G^+/x')\ge \frac1{100}$.

  Now, $G^+/x'$ has $n$ vertices and $m$ edges, and $k+1$
  leaves. Hence, by induction, there exists $G'$ with $n$ vertices and
  $m$ edges and
  \[
  \rtot(G')\le \rtot(G^+/x')-\frac1{100}(\ell-(k+1)) \le
  \rtot(G)-\frac1{100}(\ell-k),
  \] as claimed.
\end{proof}

\begin{proof}[Proof of Theorem~\ref{leaves:thm-better}]
  Suppose that $G$ is as in the theorem. By
  Theorem~\ref{t:main-rooted-rrg-lemma} we let $G'$ be a rooting of
  $G$ with $B(G')<A(G)/2+o(1)$ and $m'=(1+o(1))m$ edges. Observe
  that $G'$ has at most as many leaves as $G$, so $G'$ has at most
  $\gamma n$ leaves. Thus, by Lemma~\ref{l:leaves-rooted}, there
  exists $G''$ with the same number of vertices and edges as $G'$ and
\[    \rtot(G')-\rtot(G'')\ge \frac1{100}\left(9n-8m'-\gamma n\right)\]
  Therefore
  \begin{align*}
    B(G')-B(G'')&= \frac{\rtot(G')-\rtot(G'')}{n}\\
    &\ge \frac{9n-8m'-\gamma n}{100n}\\
    &=    \frac{9-4\a-\gamma }{100}+o(1)\\
    &\ge    \frac{4\eps-\gamma }{100}+o(1).
  \end{align*}
  Finally, $B(G'')\ge a(2m'/n)/2=a(\alpha)/2+o(1)$. Putting this all together we have
  \begin{align*}
    A(G)&\ge 2B(G')+o(1)\\
    &\ge 2B(G'')+\frac{4\eps-\gamma }{50}+o(1)\\
    &\ge a(\alpha)+\frac{2}{25}(\eps-\gamma/4)+o(1).\qedhere
  \end{align*}
\end{proof}

As mentioned above, with substantially more effort, the bound could be
improved significantly: to a little beyond $2\frac12$. However, the
key technique we used was to find a vertex such that contracting one
of its edges and adding a leaf reduces the total resistance.  A star
of triangles shows that there may be no such vertex when $\a\ge
3$. Thus improving the bound beyond this would require new ideas.

We would like to say that there is phase transition where the graph
changes from star-like to regular-like. We have seen that when
$\a>3.83$ the optimal graph cannot contain a positive proportion of
leaves, and when $\a<9/4$ the optimal graph must contain a positive
proportion of leaves. It seems likely that there is a threshold when
the optimal graph changes; i.e., that there is a phase transition. But
we are not able to show this and there could be a region where there
are optimal graphs with, and without, a positive proportion of leaves.

\section{Queen-Bee Model}\label{s:queen-bee}

A naive approach to the rooted problem would be to insist that we join
the root to every other vertex. This uses $n$ edges (since we have
defined the rooted model to have $n$ non-root vertices). As well as
being a natural subclass of rooted graphs, it also has a natural
interpretation in the statistical application where it corresponds to
the situation that every treatment is being compared with one control
treatment.  Following Bailey and Cameron~\cite{MR3026766}, we call
such graphs \emph{Queen-Bee networks}. Note we do allow multiple edges
between the root and non-root vertices; we insist only that there is
at least one edge between the root and each non-root vertex.

In this section we show that the optimal Queen-Bee network for
$\a\in[2,3]$ is the star of triangles and leaves with the correct
number of edges, but that a Queen-Bee network is not optimal (in the
space of all rooted graphs) for any $\a>2$.

We prove our bound by moving into the space of networks where each
edge, including the edges from the root vertex, can have any
non-negative real conductance and, as in the introduction, the total
conductance is bounded by $m$. We say a
configuration is \emph{legal} if the non-root vertices can be partitioned into
`components' satisfying
\begin{enumerate}
\item edges between distinct components have zero conductance
\item each component of size $s$ has a total conductance internally an
integer at least $s-1$, and total conductance to the root an integer
at least $s$.
\end{enumerate}
\begin{lemma}\label{l:qb-component}
  Suppose that $C$ is a component of a legal configuration on $s$ vertices
  with sum of the conductances of the edges $m=2s-1+t$. Then the total
  effective resistance to the root is at least
  $\frac{s+2}3-\frac{2t}{3}=s-\frac{2(m-s)}{3}$. Moreover this
  bound is only obtained if $m=2s-1$.
\end{lemma}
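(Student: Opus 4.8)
The plan is to work in the continuous conductance relaxation set up before the lemma, and first rewrite the target in the cleaner equivalent form
\[
\rtot \ge \frac{5s - 2m}{3},
\]
which is identical to $\frac{s+2}{3}-\frac{2t}{3}$ after substituting $m = 2s-1+t$. Write $c_r$ for the total conductance from the component to the root and $c_i$ for the total internal conductance, so that $c_r \ge s$ and $c_i \ge s-1$ are integers with $c_r + c_i = m$. The key point is that $\rtot = \sum_{x} R_{\rho x}$ is a \emph{convex} function of the vector of edge-conductances: each $R_{\rho x}$ is the reciprocal of the effective conductance between $x$ and $\rho$, and the effective conductance is concave in the conductances (being a minimum of linear functions, by the Dirichlet principle). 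This is exactly the averaging property recorded in Appendix~A.

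First I would symmetrise. The objective $\rtot$ and the constraints on $c_r,c_i$ are invariant under relabelling the $s$ non-root vertices, so for any legal configuration $c$ the averaged configuration $\bar c = \frac{1}{s!}\sum_{\pi}\pi c$ (the sum over all relabellings) is still legal, has the same $c_r,c_i$, and by convexity satisfies $\rtot(\bar c)\le \rtot(c)$. Hence the minimum is attained at a symmetric configuration, in which every vertex is joined to the root by conductance $c_r/s$ and every internal pair by conductance $2c_i/\big(s(s-1)\big)$. For such a configuration the grounded Laplacian is a combination of the identity $I$ and the all-ones matrix $J$, so its eigenvalues are $c_r/s$ (once, on the all-ones vector) and $\frac{c_r(s-1)+2c_i s}{s(s-1)}$ (with multiplicity $s-1$); taking the trace of the inverse gives the closed form
\[
\rtot = \frac{s}{c_r} + \frac{s(s-1)^2}{c_r(s-1) + 2 c_i s}.
\]

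It then remains to minimise this expression over the split $c_r + c_i = m$ (subject to $c_r \ge s$, $c_i \ge s-1$) and to check that the minimum is at least $\frac{5s-2m}{3}$. This is a one-variable calculus problem; at $t=0$ the constraints force $c_r = s$ and $c_i = s-1$, and the formula evaluates to exactly $\frac{s+2}{3}$, giving equality and identifying the symmetric minimiser (unit conductance to the root from each vertex, together with internal conductance $s-1$ spread over the complete graph) as the extremal component. I expect the main obstacle to be this final minimisation: one must show that along the constraint line the resistance never drops below the linear bound, and that for $t>0$ it drops strictly, so that equality forces $m = 2s-1$ as claimed; concretely this amounts to showing that the marginal reduction in $\rtot$ per unit of added conductance is strictly less than $2/3$ away from the $t=0$ point. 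A secondary technical point is to justify the symmetrisation step carefully, in particular that averaging over relabellings preserves legality, which is immediate for the integrality of the totals $c_r,c_i$ since averaging leaves these totals unchanged.
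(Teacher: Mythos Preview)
Your symmetrisation step and the resulting closed form for $\rtot$ are correct, and for $t=0$ this coincides exactly with the paper's argument: the legality constraints force $(c_r,c_i)=(s,s-1)$, and the symmetric network has total resistance exactly $(s+2)/3$.

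For $t>0$ you diverge from the paper. Rather than minimising your formula over the split $c_r+c_i=m$, the paper sidesteps the optimisation entirely with a monotonicity-plus-scaling trick. It first relaxes to the over-generous allocation $c_r=s+t$ and $c_i=s-1+t$ simultaneously (total $2s-1+2t>m$, which by monotonicity can only lower $\rtot$), and then observes that this network is dominated edge-by-edge by the $t=0$ symmetric network with every conductance multiplied by $1+t/(s-1)$. Since resistances scale reciprocally, the scaled total resistance is $\tfrac{s+2}{3}\cdot\tfrac{s-1}{s-1+t}$, and the drop from $(s+2)/3$ is $\tfrac{s+2}{3}\cdot\tfrac{t}{s-1+t}\le\tfrac{2t}{3}$ as soon as $s+2t\ge4$, i.e.\ for all $s\ge2$, $t\ge1$. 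The case $s=1$ and the boundary case $(s,t)=(2,1)$ (where the scaling inequality is an equality) are then disposed of by direct inspection.

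Your direct route would also work: the one-variable function $c_r\mapsto s/c_r+s(s-1)^2/\bigl(2ms-c_r(s+1)\bigr)$ is convex on $[s,s+t]$, so it suffices to locate its minimum and compare with $(5s-2m)/3$. This is genuinely just calculus, but the algebra is heavier than the paper's two-line scaling bound. What the paper's trick buys is simplicity at the price of a slightly lossy estimate (hence the separate strictness check at $(s,t)=(2,1)$); what your exact formula would buy, once the minimisation is actually carried out, is strict inequality for all $t\ge1$ in one stroke and a sharper picture of how $\rtot$ depends on the split.
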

\begin{proof}
  If $t=0$ (the minimum allowed in a legal configuration) then by the
  usual averaging argument the conductances on all edges inside a
  component are equal, and the conductances to the root from a
  component are all equal. Thus, by the definition of legality we know
  that each edge to the root has conductance~1, and each edge in the
  component has conductance $2/s$. Easy calculation shows that the
  resistance to the root is $\frac{s+2}{3}$ and the bound is tight in
  this case.

  Now suppose that $t>0$.  If $s=1$ then the resistance to the root is
  $\frac{1}{t+1}$ which is at least $1-\frac{2t}{3}$ for all positive
  integers. Hence the only remaining case is $t>0$ and $s>1$. 

  We do not know (we could calculate but it is not informative)
  whether to put the extra conductance in the component or to the
  root. However, we just do both: we show that the bound still holds
  if the total conductance in the component is $s-1+t$ and the total
  conductance to the root is $s+t$. Indeed, if we take the example
  above on $s$ vertices for $t=0$ and multiply each conductance by
  $1+t/(s-1)$ then we get a network with conductances at least this
  large and the resistance to the root goes down by
  \begin{align*}
    \frac{s+2}{3}\left(1-\frac{1}{1+\frac{t}{s-1}}\right) 
    &=\frac{s+2}{3}\left(1-\frac{s-1}{s-1+t}\right)
    &=\frac{s+2}{3}\frac{t}{s-1+t}
    \le \frac{2}{3}t
  \end{align*}
  with equality only if $s=2$ and $t=1$. It is easy to verify that the
  bound in this case is strict too.
\end{proof}
\begin{theorem}\label{t:qb-2-3}
  Suppose that $G$ is a Queen-Bee network with $n$ vertices and average degree
  $\a$. Then the average resistance to the root $B(G)$ is at least
  $\frac{5-\a}{3}$. Moreover, for $\a\in [2,3]$ this is attainable.
\end{theorem}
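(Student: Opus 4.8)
The plan is to reduce to the continuous (arbitrary-conductance) model and then apply the per-component bound of Lemma~\ref{l:qb-component}. First I would pass from the integer Queen-Bee network $G$ to the continuous relaxation: any genuine Queen-Bee network, viewed as a network of unit-conductance edges, is a legal configuration in the sense defined just before the lemma, since the Queen-Bee condition forces at least one edge (conductance $\ge 1$) from the root to each non-root vertex, so each ``component'' (a connected piece of the non-root graph together with its root-edges) has total conductance to the root an integer at least its size $s$, and internal conductance an integer at least $s-1$ for connectivity. Thus the minimum of $B$ over Queen-Bee networks is bounded below by the minimum over legal configurations with total conductance $m = \alpha n /2$.

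Next I would decompose a legal configuration into its components $C_1,\dots,C_r$, where component $C_j$ has $s_j$ vertices and total conductance $m_j = 2s_j - 1 + t_j$ with $t_j \ge 0$ (so $\sum_j s_j = n$ and $\sum_j m_j \le m$). Applying Lemma~\ref{l:qb-component} to each component gives total effective resistance to the root at least $s_j - \tfrac{2(m_j - s_j)}{3}$. Summing over all components, the total resistance $\rtot$ is at least
\[
\sum_{j} \left( s_j - \frac{2(m_j - s_j)}{3} \right) = n - \frac{2(m - n)}{3} = n\left(1 - \frac{2}{3}\left(\frac{\alpha}{2} - 1\right)\right) = n\cdot \frac{5-\alpha}{3}.
\]
Dividing by the number of non-root vertices $n$ gives $B(G) \ge \frac{5-\alpha}{3}$, as claimed. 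The only point to check carefully here is that the aggregate edge-count constraint $\sum_j m_j \le m$ feeds correctly into the summed bound; because Lemma~\ref{l:qb-component}'s bound is linear in $m_j$ with a negative coefficient on the ``excess'' conductance, spending fewer edges only helps the lower bound, so the inequality $\sum_j m_j \le m$ is used in the harmless direction.

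For attainability when $\alpha \in [2,3]$, I would exhibit the star of triangles and leaves: take $(\alpha - 2)n/2$ triangles through the root (using the two root-edges plus one base edge each) and $(3-\alpha)n$ leaves joined to the root, which is exactly the explicit construction noted after Lemma~\ref{star:lem} and which is a Queen-Bee network since every non-root vertex is joined to the root. Each triangle vertex has resistance $2/3$ to the root and each leaf resistance $1$, and a direct average recovers $\frac{5-\alpha}{3}$; alternatively this matches the equality case of Lemma~\ref{l:qb-component} (the $t_j = 0$ case, where each component is a single triangle or single leaf). The main obstacle I anticipate is the first reduction step: one must verify that the continuous relaxation's optimum is genuinely a lower bound for the integer problem and that the integrality constraints in the definition of ``legal'' (the conductances to the root and internally being integers at least $s$ and $s-1$) faithfully capture exactly the Queen-Bee constraint without being either too weak or too strong. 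Once that relaxation is justified, the rest is the bookkeeping of summing the per-component bound and producing the matching construction.
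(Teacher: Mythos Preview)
Your proposal is correct and follows essentially the same argument as the paper: decompose the Queen-Bee network into the connected components of the non-root graph, observe that each such component is a legal component in the sense required by Lemma~\ref{l:qb-component}, sum the per-component bounds to obtain $\rtot \ge n(5-\alpha)/3$, and exhibit the star of triangles and leaves for attainability. Your worry about the ``continuous relaxation'' step is unnecessary, since no relaxation is actually taking place---the unit-conductance Queen-Bee network is itself already a legal configuration, so Lemma~\ref{l:qb-component} applies directly with $\sum_j m_j$ equal to (not merely at most) $m$.
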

\begin{proof}
  Let $m=\a n/2$.  Split the graph (without the root) into
  components. Each component is necessarily a legal component in the
  above definition. We write $n_i$ for the sizes, and $m_i$ for the
  weights, of the components.  By Lemma~\ref{l:qb-component} the total
  resistance to the root is at least
  \[
  \sum_i \left(n_i-\frac{2(m_i-n_i)}{3}\right)=n-\frac{2m-2n}{3}=\frac{n (5-\a)}{3}
  \]
  and, thus, the average resistance to the root is at least $\frac{5-\a}{3}$.

  Finally, to see that this is attainable when $\a\in [2,3]$
  consider a star of leaves and triangles with the appropriate number
  of edges. The bound in Lemma~\ref{l:qb-component} is attained for both
  leaves and triangles (components with $t=0$ and either $r=1$ or
  $r=2$) and hence the above bound is attained.
\end{proof}
Next we improve this bound for large $\a$.
\begin{lemma}\label{l:qb-all-a}
  Suppose that $G$ is a Queen-Bee network with average degree $\a$. Then
  \[
  B(G)\ge
  \begin{cases}
    \frac{5-\a}3\qquad&\text{if $\a\le 3\frac12$}\\
    \frac{1}{\a-3/2}\qquad&\text{if $\a\ge 3\frac12$}
  \end{cases}
  \]
\end{lemma}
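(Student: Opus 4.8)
The first case $\a\le 3\tfrac12$ is precisely Theorem~\ref{t:qb-2-3}, so the plan is to prove $B(G)\ge 1/(\a-3/2)$ for $\a\ge 3\tfrac12$. The engine is the two-step conductance estimate from the proof of Theorem~\ref{t:2-step-lower}: for every non-root vertex $x$ we have $R_{x\rho}\ge 1/c_x$, where $c_x=\sum_{y\in\Gamma'(x)}\frac{d_y-1}{d_y}+e_x$ is the upper bound on the two-step conductance used there, and the computation in that proof in fact establishes $\sum_x c_x\le \a n-n-\Phi$ with $\Phi=\sum_y e_y\frac{d_y-1}{d_y}$ (Theorem~\ref{t:2-step-lower} then simply discards the nonnegative term $\Phi$). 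The Queen-Bee hypothesis is exactly what lets us retain $\Phi$.

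Indeed, in a Queen-Bee network $e_y\ge 1$ for every $y$, so each vertex of degree at least $2$ contributes at least $\frac{d_y-1}{d_y}\ge\frac12$ to $\Phi$. Writing $L$ for the leaves (degree-$1$ vertices, which contribute $0$) and $N$ for the remaining vertices, this gives $\Phi\ge |N|/2$. When $G$ has no leaves this already finishes the proof: $\sum_x c_x\le (\a-\frac32)n$, and the AM--HM inequality $\sum_x R_{x\rho}\ge\sum_x 1/c_x\ge n^2/\sum_x c_x$ yields $B(G)\ge 1/(\a-\frac32)$. This is the step that upgrades the $\a-1$ of Theorem~\ref{t:2-step-lower} to $\a-\frac32$, and it is where integrality of the degrees enters (in a fully continuous relaxation a vertex could carry total conductance just above $1$, making $\frac{d_y-1}{d_y}$ tiny, and the bound would fail).

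The genuine obstacle is the presence of leaves, since applying AM--HM to all vertices at once is too wasteful: a leaf has $c_x=1$ but $R_{x\rho}=1$ (in a Queen-Bee network a leaf is joined only to the root), and averaging it against high-degree vertices discards this. The remedy is to split the two populations. I would write $\rtot(G)=|L|+\sum_{x\in N}R_{x\rho}$, using $R_{x\rho}=1$ on $L$, and apply AM--HM only on $N$. Since each leaf has $c_x=1$, we get $\sum_{x\in N}c_x=\sum_x c_x-|L|\le \a n-n-|N|/2-|L|$, so $\sum_{x\in N}R_{x\rho}\ge |N|^2/\sum_{x\in N}c_x$. Writing $\lambda=|L|/n$, the two contributions combine to
\[
B(G)\ge \lambda+\frac{(1-\lambda)^2}{\a-\tfrac32-\tfrac\lambda2}=:f(\lambda).
\]

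It then remains to verify $f(\lambda)\ge 1/(\a-\frac32)$ for all $\lambda\in[0,1]$ and all $\a\ge 3\tfrac12$. As $f(0)=1/(\a-\frac32)$, it suffices to show $f$ is nondecreasing on $[0,1]$; differentiating and using $\a-\frac32\ge 2$ gives $f'(\lambda)>0$ throughout, a routine one-variable check. I expect this last computation to be the only fiddly part; the substantive ideas are the degree-$\ge2$ lower bound on $\Phi$ and the decision to peel off the leaves before averaging rather than after.
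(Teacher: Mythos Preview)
Your proof is correct. The core engine---retaining the term $\Phi=\sum_y e_y(d_y-1)/d_y$ in the two-step conductance sum and observing that the Queen-Bee hypothesis forces each non-leaf vertex to contribute at least $\tfrac12$---is exactly the paper's idea. Where you diverge is in the treatment of leaves.

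The paper handles leaves by a structural decomposition: it first proves $B(G)\ge 1/(\a-3/2)$ only for leaf-free Queen-Bee networks (where the $\tfrac12$-per-vertex saving applies to every vertex), combines this with Theorem~\ref{t:qb-2-3} to get $B(G)\ge f(\a)$ in the leaf-free case, then writes a general $G$ as the union of its leaf-free part $G_1$ (with average degree $\a_1$) and a star $G_2$ (average degree $2$) and invokes convexity of $f$ to conclude $B(G)\ge f(\a)$. You instead keep the whole graph together but peel off the leaves inside the AM--HM step, which produces the explicit one-parameter bound $f(\lambda)=\lambda+(1-\lambda)^2/(\a-\tfrac32-\tfrac\lambda2)$ and reduces the problem to checking $f'(\lambda)\ge 0$ for $\a\ge 3\tfrac12$. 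Your derivative computation is fine: writing $c=\a-\tfrac32\ge 2$, one finds $(c-\lambda/2)^2-(1-\lambda)(2c-\lambda/2-1/2)=(c-1)^2-\tfrac12+\lambda(c-\lambda/4)\ge \tfrac12>0$.

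The paper's route is a little slicker in that convexity of $f$ simultaneously handles both regimes of the lemma, whereas you treat $\a\le 3\tfrac12$ and $\a\ge 3\tfrac12$ separately. On the other hand, your argument avoids needing to check that the leaf-free part $G_1$ is itself leaf-free (which is true but requires noting that in a Queen-Bee network every degree-$1$ vertex is adjacent only to the root), and it does not appeal to convexity at all.
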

\begin{proof}
  Let $f=f(\a)$ be the claimed lower bound (i.e., the right hand side
  above) and note that $f$ is convex.  

  We start by showing that, if $G$ has no leaves, then
  $B(G)\ge 1/(\a-3/2)$.  This is actually an easy consequence of the
  proof of the two-step lower bound
  (Theorem~\ref{t:2-step-lower}). The penultimate line of that proof
  showed that the sum of the two-step conductances is at most
  \begin{equation}
  \sum_x(d_x+e_x)-\sum_y1 + \sum_y \frac{e_y}{d_y}-\sum_ye_y\label{e:two-stage-line2}    
  \end{equation}
  In that proof we just used that the sum of the final pair of terms,
  \[
  \sum_y \frac{e_y}{d_y}-  \sum_ye_y
  \]
  is negative. However, for a Queen-Bee network we know that
  $e_y\ge 1$ for all $y$.  Since $G$ has no leaves, $d_y\ge 2$ and we
  see that $e_y-e_y/d_y\ge 1/2$ for all vertices $y$.  Substituting
  this improved bound into equation~(\ref{e:two-stage-line2}) we get
  \[
  \sum_x(d_x+e_x)-\sum_y1 + \sum_y \frac{e_y}{d_y}-\sum_ye_y
  \le \a n-n-n/2    = (\a -3/2)n
  \]
  and thus the average resistance to the root in $G$ is at least
  $\frac{1}{\a-3/2}$.

  By Theorem~\ref{t:qb-2-3} we see that $B(G)\ge \frac{5-\a}3$ so
  combining these we have $B(G)\ge f(\a)$.

  \smallskip So far we have only considered the case when $G$ has no
  leaves, and we turn now to the general case.  First observe that we
  may assume that all leaves are joined to the root, since we can move
  them to the root without increasing the resistance.

  Given any such Queen-Bee network $G$, we write it as the union
  of a Queen-Bee network $G_1$ and a star $G_2$ (i.e., the union of
  some leaves) sharing a common root but otherwise disjoint. Let
  $\alpha_1$ be the average degree of $G_1$ and $\alpha_2=2$ the
  average degree of $G_2$. Since, by the above, $B(G_1)\ge
  f(\alpha_1)$, and, trivially, $B(G_2)=1=f(\alpha_2)$, the convexity
  of $f$ implies that $B(G)\ge f(\alpha)$.
\end{proof}

These bounds allow us to show that, with the exception of $\a=2$
(essentially the star), no Queen-Bee network is optimal.

\begin{theorem}\label{t:qb-never-best}
No Queen-Bee network with average degree strictly greater than 2 is optimal.
\end{theorem}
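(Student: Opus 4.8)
The plan is to show that no Queen-Bee network with average degree $\a>2$ can achieve the optimal value $b(\a)$, by comparing the Queen-Bee lower bounds of Lemma~\ref{l:qb-all-a} against the construction-based upper bounds for $b(\a)$ established earlier. The strategy is essentially a two-regime comparison: for $\a$ close to $2$ I would use the upper bound coming from the mixed construction (Theorem~\ref{mixed:thm}), and for larger $\a$ I would use the random-regular-type bounds (Corollary~\ref{rrr:cor}) together with convexity.

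First I would set up the key inequality. Any Queen-Bee network $G$ with average degree $\a$ is in particular a rooted graph in $\cB(n,\a)$, so by definition $B(G)\ge b_n(\a)\ge b(\a)$. Thus to prove no such network is optimal it suffices to show that the Queen-Bee lower bound strictly exceeds $b(\a)$, i.e.
\[
f(\a) > b(\a) \qquad \text{for all } \a>2,
\]
where $f$ is the lower bound from Lemma~\ref{l:qb-all-a}. So the whole theorem reduces to verifying this strict inequality across the range $(2,\infty)$.

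Next I would split into the two regimes matching the definition of $f$. On $(2,3\tfrac12]$ we have $f(\a)=\tfrac{5-\a}3$, and the relevant competitor is Theorem~\ref{mixed:thm}, which gives $b(\a)\le 1-0.3546(\a-2)$ on $[2,10/3]$. Since $\tfrac{5-\a}3 = 1 - \tfrac13(\a-2)$ and $\tfrac13 > 0.3546$ is \emph{false} (note $1/3\approx0.3333<0.3546$), the mixed construction line has the steeper descent, so the two lines cross exactly at $\a=2$ and $f(\a)>b(\a)$ strictly for $\a\in(2,10/3]$; I would then push past $10/3$ up to $3\tfrac12$ using convexity of $b$ and the regular-graph value $b(10/3)\le0.5271865$, checking numerically that $f$ stays above $b$. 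On $[3\tfrac12,\infty)$ we have $f(\a)=\tfrac1{\a-3/2}$, and I would compare against Corollary~\ref{rrr:cor}, $b(\a)\le\tfrac{\a-1}{\a(\a-2)}$ at integer points together with the convex-hull bound; since $\tfrac1{\a-3/2} > \tfrac{\a-1}{\a(\a-2)}$ reduces to a polynomial inequality that holds for all $\a>2$, this regime follows by a routine calculation (and the strictness is preserved because the upper bound on $b$ lies strictly below $f$).

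The main obstacle I expect is the intermediate range around $\a\in(10/3,3\tfrac12)$ and the transition point $\a=3\tfrac12$, where neither the mixed-construction line nor the clean regular-graph formula is directly available and one must interpolate using convexity of $b$ while keeping the inequality \emph{strict}. Away from the endpoints both bounds have comfortable gaps, so the inequality is robust; the delicate bookkeeping is only to confirm that $f(\a)>b(\a)$ does not degenerate to equality anywhere in $(2,\infty)$, which near $\a=2$ follows from the strictly steeper slope of the construction bound and elsewhere from the explicit numerical separation. Once the strict inequality $f(\a)>b(\a)$ is verified on all of $(2,\infty)$, the theorem is immediate.
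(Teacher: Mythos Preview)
Your overall strategy is exactly the paper's: compare the Queen-Bee lower bound $f(\a)$ from Lemma~\ref{l:qb-all-a} against the construction-based upper bounds for $b(\a)$, splitting at $\a=3\tfrac12$. The first regime is handled correctly: $(5-\a)/3 = 1 - \tfrac13(\a-2)$ versus $1 - 0.3546(\a-2)$ gives strict inequality on $(2,10/3]$, and interpolating to $3\tfrac12$ via the chord from the $10/3$ construction to the $4$-regular value is straightforward.

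There is, however, a genuine slip in the second regime. First, the inequality $\tfrac{1}{\a-3/2} > \tfrac{\a-1}{\a(\a-2)}$ does \emph{not} hold for all $\a>2$: it rearranges to $\a>3$. That alone is harmless since you only need it for $\a\ge 3\tfrac12$. The real gap is that Corollary~\ref{rrr:cor} only gives $b(\a)\le \tfrac{\a-1}{\a(\a-2)}$ at \emph{integer} $\a$, so on each interval $[t,t+1]$ the actual upper bound for $b$ is the chord between $(t,l(t))$ and $(t+1,l(t+1))$, where $l(\a)=\tfrac{\a-1}{\a(\a-2)}$. Since $l$ is convex on $(2,\infty)$, these chords lie \emph{above} the curve $l$, so your pointwise inequality $\tfrac{1}{\a-3/2}>l(\a)$ does not automatically imply the chords stay below $\tfrac{1}{\a-3/2}$. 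The paper handles this by checking, for each integer $t\ge 4$, that the line through $(t,l(t))$ and $(t+1,l(t+1))$ never meets $\tfrac{1}{\a-3/2}$ (a discriminant calculation, relegated to Appendix~B), and separately checking the chord from $(10/3,0.528)$ to $(4,3/8)$ on $[3\tfrac12,4]$. Your ``routine calculation'' needs to be this chord-by-chord verification, not the curve comparison.
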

\begin{proof}
  We just need to show that our (non-Queen-Bee) constructions give
  average resistance strictly less than the lower bound proved in
  Lemma~\ref{l:qb-all-a}.

  We have given constructions of average degree $\a$ and resistance at
  most $r$ for any point in the convex hull of the star ($\a=2$,
  $r=1$), the random $10/3$ regular construction used for
  Theorem~\ref{mixed:thm} ($\a=\tfrac{10}{3}$ $r=0.528$), and the
  regular construction (Corollary~\ref{rrr:cor}) for $\a\ge 4$ with
  $\a\in \N$ (which gives $r=\tfrac{\a}{\a(\a-2)}$). An easy but
  lengthy calculation (given in the appendix) shows that these
  constructions have average resistance less than $1/(\a-3/2)$ for all
  $\a\ge 3\frac12$ and less than $(5-\a)/3$ for $2<\a\le 3\frac12$.
\end{proof}

\section{Open Problems}\label{s:open-questions} Our main open
question is to find the function $a$. To state our conjecture we want
to define a function $f$ that is the maximal convex function which is
less than the value of $b$ given by the star, and less than the random
regular heuristic for $b$. More precisely, let
$f\colon[2,\infty)\to\R$ be the maximal convex function such that
$f(2)\le1$ and $f(x)\le \frac{x-1}{x(x-2)}$ for all
$x\in(2,\infty)$. (Since the supremum of a family of convex functions
is also convex it is clear that this function exists.)

\begin{conj}
  Let $f$ be defined as above.  Then $a(x)\ge 2f(x)$ for all $x$
  (equivalently $b(x)\ge f(x)$).
\end{conj}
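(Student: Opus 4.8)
The plan is to reduce the conjecture to a lower bound in the rooted model and then attack that bound through the local geometry of near-optimal graphs. By Theorem~\ref{root=unroot:t} we have $a(x)=2b(x)$, so $a(x)\ge 2f(x)$ is equivalent to $b(x)\ge f(x)$, and it suffices to work with $b$. The matching upper bound $b(x)\le f(x)$ is already essentially in hand: the random-regular heuristic (Corollary~\ref{rrr:cor}) realises the curve $g(x):=\frac{x-1}{x(x-2)}$ at integer $x$, the mixed construction (Theorem~\ref{mixed:thm}) realises points close to $g$ at non-integer $x$, and convexity (Lemma~\ref{convex:lem}) fills in the gaps. Thus the entire content of the conjecture is the lower bound $b(x)\ge f(x)$.

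Since $f$ is the lower convex envelope of the point $(2,1)$ and the curve $\{(x,g(x)):x>2\}$, one is tempted to prove only the two extreme facts $b(2)\ge 1$ (Theorem~\ref{t:2-step-lower} at $\alpha=2$) and $b(x)\ge g(x)$ on the range where $f=g$, and then interpolate by convexity. This does not work: on the linear (tangent) piece of $f$, a convex function $b$ matching $f$ at the endpoints lies \emph{below} its chord, i.e. below $f$, which is the wrong direction; and the two-step bound $1/(\alpha-1)$ is itself far too weak away from $\alpha=2$ (it gives only $b(3)\ge 1/2$, against the conjectured $f(3)\approx 0.65$). So a genuinely new lower bound is needed across the whole range, and in particular one must rule out constructions that beat $g$ by exploiting short cycles.

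I would attack this via local weak (Benjamini--Schramm) limits. Take graphs $G_n$ with average degree $\to\alpha$ and $B(G_n)\to b(\alpha)$, and pass to a subsequential local weak limit: a unimodular random rooted graph $(\mathcal{G},o)$ with expected root degree $\alpha$. The aim is to write $b(\alpha)$ as a functional $\Phi(\mathcal{G},o)$ of the limit --- morally the expected effective resistance from the root out to infinity, exactly the local quantity driving Theorem~\ref{t:main-rooted-rrg-lemma} --- and then to minimise $\Phi$ over all unimodular measures of mean degree $\alpha$. Two reductions should pin down the extremiser. First, a cut-off/monotonicity argument (resistance to distance $\ell$ is a lower bound that increases to the resistance to infinity) should make $\Phi$ lower semicontinuous, so that the limiting value is a valid lower bound for $b$. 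Second, a ``trees are extremal'' step should show that, for a fixed degree distribution, replacing $(\mathcal{G},o)$ by a unimodular Galton--Watson tree with the same offspring law does not increase $\Phi$; optimising $\Phi$ over such trees is then a one-dimensional recursion of exactly the type used to compute $x$ and $y$ before Theorem~\ref{mixed:thm}, and convexity in the degree distribution should force the optimum onto $g$ and its convex hull $f$.

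The hard part is the ``trees are extremal'' step, and this is precisely where the paper's remark that the $k$-step bound fails for $k\ge 5$ bites: short cycles genuinely lower the local resistance, so one cannot simply bound resistance below by a tree resistance. The real content is that the edges spent creating local cycles would lower $\Phi$ \emph{more} if spent increasing the degree --- that global tree-likeness beats local clustering. Quantifying this degree-versus-cycle trade-off, uniformly over the irregular degree distributions that are essential for the non-integer $\alpha$ at the heart of Theorem~\ref{t:alpha=3}, is the crux, and I do not see how to make it rigorous with the present techniques. A spectral reformulation --- minimising $\int \frac{d\nu(\lambda)}{\lambda}$ over the Laplacian spectral measures $\nu$ of mean-degree-$\alpha$ graphs, with the Kesten--McKay law conjecturally extremal via a convex-order domination of the tree measure by the measure of any graph with cycles --- is an appealing alternative, but it runs into the same obstacle once $\alpha$ is non-integer and the graphs are irregular.
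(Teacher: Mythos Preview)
The statement you are attempting is a \emph{conjecture}, listed in Section~\ref{s:open-questions} among the open problems; the paper gives no proof of it, and explicitly regards the lower bound $b(x)\ge f(x)$ as unresolved. So there is no paper proof to compare your proposal against.

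Your proposal is not a proof either, and to your credit you say so: you correctly isolate the lower bound as the whole content, correctly observe that convexity of $b$ runs the wrong way for interpolation on the tangent segment, and correctly note that the two-step bound of Theorem~\ref{t:2-step-lower} is far too weak (and that the paper itself remarks the naive $k$-step analogue must fail for $k\ge 5$). The Benjamini--Schramm/unimodular framework you sketch is a sensible way to formalise the ``local resistance'' heuristic underlying Theorem~\ref{t:main-rooted-rrg-lemma}, and identifying the ``trees are extremal under an edge budget'' step as the crux is exactly right. But this step is not just a technicality: short cycles genuinely decrease local resistance, so any proof must quantify a global trade-off, and neither your local-weak-limit reduction nor the spectral/Kesten--McKay reformulation supplies that argument. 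As it stands your write-up is a well-reasoned research plan, not a proof; the conjecture remains open.
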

\noindent%
In particular, since we have the corresponding upper bound for integer
$\alpha$ at least four, this would imply that
$a(\alpha)=\frac{\a-1}{\a(\a-2)}$ for all such $\alpha$.

Our next two conjectures concern the existence of a phase transition
between star-like and regular-like behaviour as discussed in
Section~\ref{s:optimal-graphs}. 
\begin{conj}
  There is a threshold $\a_0$ on the average degree below which all
  optimal graphs have a positive proportion of leaves, and
  above which all optimal graphs have $o(n)$ leaves.
\end{conj}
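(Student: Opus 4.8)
The plan is to reduce the conjecture to a convexity question about graphs that have \emph{no} leaves. Write $b_0(\a)$ for the asymptotic minimum rooted resistance over graphs of average degree $\a$ in which every non-root vertex has degree at least $2$ (the analogue of $b$ in the regular-like regime); its existence follows from an argument identical to Lemma~\ref{limit:lem}. Since a single leaf contributes average degree $2$ and resistance exactly $1$ to the root, any rooted graph $G$ in which, as we may assume for an optimal graph, every leaf is joined to the root (moving leaves to the root does not increase the resistance) splits across the shared root into a star of $\gamma n$ leaves and a leaf-free core $H$ of average degree $\a_H=(\a-2\gamma)/(1-\gamma)$, so that $B(G)=\gamma+(1-\gamma)B(H)\ge\gamma+(1-\gamma)b_0(\a_H)$. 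Minimising the right-hand side over admissible splits is exactly the operation of taking the lower boundary of the convex hull of the star point together with the curve of $b_0$, so I would first prove the clean structural identity
\[
b(\a)=\text{the lower boundary at }\a\text{ of }\conv\big(\{(2,1)\}\cup\{(\b,b_0(\b)):\b>2\}\big),
\]
the upper bound coming from the mixed construction of Lemma~\ref{convex:lem} and the lower bound from the displayed inequality together with the definition of $b_0$.

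Granting this identity, the conjecture becomes a statement about where the lower boundary actually uses the point $(2,1)$. A leaf-free graph of average degree close to $2$ is forced to contain long cycles through the root, so $b_0(\a)\to\infty$ as $\a\to2^+$; hence near $\a=2$ the boundary is anchored at the star point $(2,1)$ and leaves are forced, matching Theorem~\ref{leaves:thm-better}, while for large $\a$ the regular constructions of Corollary~\ref{rrr:cor} are leaf-free and lie on the boundary, matching Lemma~\ref{l:no-leaves}. If $b_0$ were \emph{strictly convex} on $(2,\infty)$ the geometry would be completely rigid: there is then a unique tangent line from $(2,1)$ to the curve, touching it at a single point $\a_0$, and one reads off that for $\a<\a_0$ the boundary is the open tangent chord, so every optimal graph is a nondegenerate mixture with a positive proportion of leaves, whereas for $\a>\a_0$ the boundary coincides with the strictly convex curve $b_0$ and the unique optimum is leaf-free. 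Strict convexity is precisely what makes the minimising split unique, which is what the phrase ``all optimal graphs'' in the statement demands.

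The hard part, and the reason this remains a conjecture, is establishing the (strict) convexity of $b_0$, or at least enough regularity to exclude alternating behaviour. The mixing argument behind Lemma~\ref{convex:lem} shows that $b$ itself is convex, but that argument is powered by \emph{adding leaves to the root} and so tells us nothing about $b_0$: one cannot interpolate between two leaf-free graphs of different average degree without either introducing leaves or altering the core in an uncontrolled way. If $b_0$ fails to be convex, the tangent construction can touch the curve along several disjoint arcs, producing alternating leaf and leaf-free windows, or a flat tie interval in which leafy and leaf-free graphs are simultaneously optimal; this is exactly the scenario that the discussion following Lemma~\ref{l:no-leaves} and Theorem~\ref{leaves:thm-better} is unable to rule out, and excluding it would require a genuinely new understanding of the leaf-free extremal graphs rather than the convex-combination bookkeeping used so far.
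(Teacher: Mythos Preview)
This statement is a \emph{conjecture} in the paper, listed among the open problems in Section~\ref{s:open-questions}; the paper does not prove it, and indeed explicitly flags the possibility that ``there could be a region where there are optimal graphs with, and without, a positive proportion of leaves.'' So there is no proof in the paper to compare your attempt against.

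Your proposal is not a proof either, and to your credit you say so: you reduce the conjecture to the strict convexity of the leaf-free minimum $b_0$, and then observe that you cannot establish that convexity because the mixing argument behind Lemma~\ref{convex:lem} relies precisely on adding leaves. That diagnosis is accurate and is essentially the same obstruction the paper is pointing at in the paragraph after Theorem~\ref{leaves:thm-better}. A couple of smaller points are also not fully nailed down in your sketch: the existence of the limit defining $b_0$ does not follow verbatim from Lemma~\ref{limit:lem}, since that proof pads with leaves to reach an arbitrary $N$; and the leaf-stripping step (``moving leaves to the root'') can cascade, so the leaf proportion of the transformed graph need not equal that of the original, which matters when the conjecture speaks of \emph{all} optimal graphs. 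None of this is fatal to the outline, but it confirms that what you have is a plausible programme rather than a proof, matching the paper's assessment that the question is open.
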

\noindent%
In fact we would expect this also to be true for graphs which are not
optimal but are sufficiently close to optimal.  

Of course the existence of leaves is not the only way a graph could
fail to be `regular-like' -- it could have vertices of high degree.
\begin{conj}
  There is a threshold $\a_1$ on the average degree below which the
  optimal graph has a vertex with degree $\Omega(n)$, and above which
  all vertices have degree $o(n)$.
\end{conj}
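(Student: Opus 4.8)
The plan is to prove the two halves of the dichotomy separately and then to argue that they meet at a single point. Throughout I would work in the rooted model and transfer to the unrooted statement at the end via Theorem~\ref{root=unroot:t}; the passage between the two models changes no vertex degree by more than a bounded factor (plus the $o(n)$ edges incident to the new root), so the property ``some vertex has degree $\Omega(n)$'' and its negation ``all degrees are $o(n)$'' are preserved. For each $\a$ let $b_{\mathrm{low}}(\a)$ denote the infimum of $\liminf_n B(G_n)$ over sequences $G_n\in\cB(n,\a)$ in which every vertex has degree $o(n)$. The conjecture is then equivalent to the existence of a single $\a_1$ with $b_{\mathrm{low}}(\a)>b(\a)$ for $\a<\a_1$ and $b_{\mathrm{low}}(\a)=b(\a)$ for $\a>\a_1$.

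First I would establish the below-threshold half, that $b_{\mathrm{low}}(\a)>b(\a)$ for $\a$ close to $2$. The cleanest anchor is $\a=2$: any $G\in\cB(n,2)$ is a tree, and its rooted resistance is exactly the average depth from the root. If every degree is $o(n)$ then at most $o(n)$ vertices lie at depth $1$, so a $(1-o(1))$ fraction lie at depth at least $2$, forcing $B(G)\ge 2-o(1)$, whereas the star gives $b(2)=1$. Thus $b_{\mathrm{low}}(2)\ge 2>1=b(2)$, a gap of constant size. By continuity of $b$ (Lemma~\ref{convex:lem}) together with a limiting argument for $b_{\mathrm{low}}$, this gap persists on a right-neighbourhood $[2,2+\delta)$; on this interval any graph realising $b(\a)$---for instance the star of triangles and leaves of Lemma~\ref{star:lem}, whose centre has degree $\Omega(n)$---must have a vertex of linear degree. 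This part parallels Theorem~\ref{leaves:thm-better} and should be comparably routine.

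The above-threshold half, that $b_{\mathrm{low}}(\a)=b(\a)$ once $\a$ is large, is harder and I would attack it by an exchange argument analogous to Lemma~\ref{l:no-leaves}. Suppose $G$ is optimal yet has a vertex $v$ of degree $cn$. I would form $G'$ with the same number of edges by deleting all but a bounded number of the edges at $v$ and redistributing the freed $\Omega(n)$ edges so as to raise the average degree of $G\setminus v$. The two effects to compare are: the \emph{loss}, from the increased resistance of $v$ and of pairs that had used $v$ as a shortcut, which is $O(n)$ in total and hence only $o(1)$ in $A$ provided $G\setminus v$ is well connected; and the \emph{gain}, from the $\Omega(n)$ added edges, which raise the bulk average degree by a constant and therefore lower its average resistance by a constant by the convexity and monotonicity of $a$. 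A constant gain beats an $o(1)$ loss, so $G$ could not have been optimal. The crucial and delicate input is that the optimal $G$ is connected enough that demoting a single vertex is cheap; for large $\a$ this should follow from the fact that near-optimal graphs are regular-like (via Lemma~\ref{l:no-leaves} and the lower bound $b(\a)\ge 1/(\a-1)$ of Theorem~\ref{t:2-step-lower}), but quantifying it is exactly the obstacle that limited the leaves result to $\a\ge 3.83$.

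The main obstacle is the final step: producing a \emph{single} threshold rather than two separated regimes. The arguments above yield only an interval $[2,2+\delta)$ on which linear degree is forced and an interval $[\a^\ast,\infty)$ on which it is forbidden, with $2+\delta<\a^\ast$ in general. Closing this gap is a monotonicity statement---that the presence of an $\Omega(n)$-degree vertex in the optimum is a down-set in $\a$---for which we have no handle; it is precisely the analogue of the threshold for leaves that is left open in Section~\ref{s:optimal-graphs}. I would expect any genuine resolution to require understanding the optimal graph well enough to rule out coexistence of the two behaviours, which appears to need substantially new ideas beyond the convexity-and-exchange toolkit used here.
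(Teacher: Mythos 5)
The statement you are addressing is Conjecture~3 of the paper: it is posed as an open problem, the paper contains no proof of it, and your sketch does not close it either --- as you yourself concede in your final paragraph. But the gap is larger than the single monotonicity issue you flag, because the half you call ``routine'' fails as written. You claim the constant gap at $\alpha=2$ ``persists on a right-neighbourhood'' by continuity, but $b_{\mathrm{low}}$ is genuinely discontinuous at $2$. Concretely, at average degree $2+\eps$ take a root joined to $n/\log n$ hubs, put an $(\eps\log n/2)$-regular expander on the hubs (using only about $\eps n/4$ of the $\eps n/2$ spare edges), and hang the remaining $\approx n$ vertices as leaves off the hubs. Every degree is at most $\max(O(\log n),\,n/\log n)=o(n)$, yet a standard flow/power estimate gives hub-to-root resistance $O(1/\log n)$, so $B=1+o(1)$ --- not $2-o(1)$. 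Since $b(2+\eps)\ge 1/(1+\eps)$ by Theorem~\ref{t:2-step-lower}, the gap you must establish is at most $O(\eps)$, and its positivity needs a genuinely new lower bound for sublinear-degree graphs; nothing in Lemma~\ref{convex:lem} or Theorem~\ref{leaves:thm-better} supplies it, since in the construction above the many leaves attach to hubs of degree $O(\log n)$, so an abundance of leaves does not force a linear-degree vertex. (Your model-transfer remark also needs care: Theorem~\ref{root=unroot:t} equates values, not structures, and the rooted optimum may well have a linear-degree root even if the unrooted optimum has none.)

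The above-threshold half has an unjustified core as well. The ``loss'' from demoting a vertex $v$ of degree $cn$ is not $O(n)$ in total: deleting $\Omega(n)$ edges at $v$ can raise the resistance of $\Theta(n^2)$ pairs each by a constant, and the connectivity of $G\setminus v$ needed to rule this out is exactly the unknown structural information about near-optimal graphs. Moreover, to conclude suboptimality you need a lower bound on $A(G)$ valid for \emph{every} graph with a linear-degree vertex that exceeds the known upper bound for $a(\alpha)$, not a comparison against a re-engineered bulk. The paper's own partial result in Section~\ref{s:open-questions} shows the present limit of such arguments: running the Queen-Bee computation (as in Theorem~\ref{t:qb-never-best} and Lemma~\ref{l:qb-all-a}) gives $B(G)\ge 1/(\alpha-1-\eta/2)$ when the root has $\eta n$ non-leaf neighbours, whence the maximum degree of an optimal graph is at most $2n/(\alpha-1)$ --- still linear for every fixed $\alpha$, with the proportion tending to $0$ only as $\alpha\to\infty$. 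So neither regime of the conjecture is currently provable for any fixed $\alpha$, quite apart from the single-threshold monotonicity obstacle you correctly identify as open.
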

If these conjectures are true then, obviously, $\a_1\ge \a_0$, but
perhaps they are, in fact, equal. This would imply that removing the
leaves from any optimal the graph would yield a graph with maximum
degree $o(n)$ (since removing the leaves must yield a graph of average
degree greater than $\alpha_0$).

Theorem~\ref{t:qb-never-best} showed that it was not optimal, for any
$\a>2$, for a vertex to have degree $n$. The same argument shows that
if the root of $G$ has $\eta n$ non-leaf neighbours then $B(G)\ge
\frac{1}{\a-1-\eta/2}$. Comparison with our upper bound of
$\frac{\a-1}{\a(\a-2)}$ then shows that, for $\alpha\in \N$, $\eta$ is
at most $\frac{2}{\a-1}$. Since Lemma~\ref{l:no-leaves} shows that for
$\a\ge 4$ the optimal graph has $o(n)$ leaves, we see that the root
has degree at most $2n/(\a-1)$. Since we can short any other vertex to
the root (removing one vertex) this shows that the maximum degree is
at most $2n/(\a-1)$ and, in particular, that the maximum degree (as a
proportion of all vertices) tends to zero as $\a\to \infty$.

We also have an open question, motivated by the statistical
applications which is, perhaps, less natural from a pure mathematical
perspective. It describes the situation when treatments are compared
in blocks of size $r$ for some $r>2$; in comparison the model we have
been considering is the case when the blocks have size 2.
\begin{question}
  Fix $n$, $m$ and $r$. Given any collection $E_1,E_2,\dots, E_m$ of
  the $r$-sets of $[n]$ we can form a graph $G$ on $[n]$ with
  edge (multi)-set $\bigcup_{i=1}^m E_i^{(2)}$ (i.e., the union of the
  cliques on each $E_i$). Which such collection minimises $A(G)$?
\end{question}

Finally, the corresponding question for maximum rather than average resistance is also very natural:
\begin{question}
  Which graphs $G$ with $|V(G)|=n$ and $e(G)\le \frac{\alpha n}{2}$ minimise
    $\max_{x,y\in V(G)}R_{xy}$.
\end{question}
The maximum seems to behave very differently from the average. Indeed,
in the rooted version of the problem, the maximum is 1 for all
$\alpha\in [2,9/4]$ (since the graph either has a leaf, or a vertex of
degree 2 with both neighbours having degree 2, as in the proof of
Lemma~\ref{l:weight-resistance}) whereas the star of triangles shows
the maximum at $\alpha=3$ is $2/3$. In particular, unlike the average,
the maximum is not a convex function.

\bibliography{mybib}{}
\bibliographystyle{abbrv}

\section*{Appendix A: Basic Theory of Electrical Networks}\label{s:en} 
In this appendix we recall the definitions from electrical networks
and the basic facts about such networks that we use in the main paper.
For a general reference see Snell and Doyle~\cite{MR920811} or
Chapters~2 and~9 of Bollob\'as~\cite{MR1633290}.

We start by recalling the definition of the effective resistance
between two vertices $x$ and $y$ in a network $G$. The definition
relies on Kirchhoff's two laws~\cite{kirchhoff} which we recall now. A
flow from $x$ to $y$ consists of an assignment of currents $I_{uw}$ to
each oriented edge $uw$ such that
\begin{itemize}
  \item \textbf{Kirchhoff's Current Law} 
  $\sum_{w\in \Gamma(u)}I_{uw}=0$ for all $u$ except $x$ and~$y$.
\end{itemize}
and it is a \emph{current flow} if it also satisfies
\begin{itemize}
  \item \textbf{Kirchhoff's Potential Law} The sum of the currents around any
  oriented cycle in $G$ is zero.
\end{itemize}

The Potential Law is equivalent to saying that we can assign
potentials (voltages) $V_w$ to each vertex $w$ such that:
\begin{itemize}
  \item \textbf{Ohm's Law} for any oriented edge $uw$ we have $I_{uw}=V_u-V_w$.
\end{itemize}
The definitions above are all for the case we primarily consider in
which all edges have unit resistance.  On a small number of occasions
we will consider graphs where edges have other resistances;
Kirchhoff's current law remain unchanged, Kirchhoff's potential law
becomes that the sum of the products of the resistance and current in
the edges round a cycle is zero, and Ohm's Law takes its more familiar
form of $I_{uw}r_{uw}=V_u-V_w$ where $r_{uw}$ denotes the resistance
of the edge $uw$.

Standard results show that, for any connected network, there is a
unique current flow from $x$ to $y$ in which unit current enters the
network at $x$ and leaves at $y$, and that the potentials are uniquely
defined by this flow up to an additive constant.  The \emph{effective
  resistance} between $x$ and $y$ is $V_x-V_y$ for this flow. The
\emph{effective conductance} is the reciprocal of the effective
resistance. We write these as $R_{xy}(G)$ and $C_{xy}(G)$
respectively; where there is no ambiguity we abbreviate these to
$R_{xy}$ and $C_{xy}$.

\begin{lemma}
  Suppose that $G$ is a connected network and $x,y\in V(G)$ and that
  $(V_x)_{x\in V(G)}$ are the potentials associated with a current flow
    from $x$ to $y$. Then all potentials lie between $V_x$ and $V_y$.
\end{lemma}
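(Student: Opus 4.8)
The plan is to prove this by the discrete maximum principle for harmonic functions on a connected graph, where the ``boundary'' consists of the two terminals $x$ and $y$. The key preliminary observation is that at every vertex $w\notin\{x,y\}$ the potential is a positive-weighted average of the potentials of its neighbours. Indeed, Kirchhoff's Current Law gives $\sum_{u\in\Gamma(w)}I_{wu}=0$, and Ohm's Law gives $I_{wu}=V_w-V_u$ (or $I_{wu}=(V_w-V_u)/r_{wu}$ in the weighted case), so $\sum_{u\in\Gamma(w)}(V_w-V_u)=0$, whence $V_w=\frac{1}{d_w}\sum_{u\in\Gamma(w)}V_u$. Since all the conductances are strictly positive this genuinely expresses $V_w$ as a convex combination of its neighbours' potentials, and this is what drives the argument.

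First I would reduce to showing that the maximum of the potentials is attained at $x$ or at $y$ (the claim for the minimum being symmetric, and the two together pinning every potential into the interval $[\min(V_x,V_y),\max(V_x,V_y)]$). So let $M=\max_w V_w$ and suppose, for contradiction, that $M>\max(V_x,V_y)$, so that $M$ is not attained at either terminal. Pick any $w$ with $V_w=M$; then $w\notin\{x,y\}$, so by the averaging identity $V_w$ equals a convex combination of its neighbours' potentials. As $V_w$ is maximal, every neighbour of $w$ must also have potential exactly $M$.

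Then I would propagate this using connectivity. Fix a simple path $w=v_0,v_1,\dots,v_k=x$; walking along it, the averaging property applied at each vertex that is still interior to $\{x,y\}$ forces the next vertex on the path also to attain $M$. The propagation can only be interrupted by reaching $x$ or $y$, so either $x$ itself attains $M$ (if $y$ does not occur earlier on the path) or some interior $v_j=y$ attains $M$; in both cases a terminal has potential $M$, contradicting $M>\max(V_x,V_y)$. Hence the maximum is attained on $\{x,y\}$, and the symmetric argument applied to $\min_w V_w$ completes the proof. The one point requiring care---and the only real obstacle---is this propagation step: one must ensure the path argument terminates correctly at a boundary vertex and handles the case where $y$ appears as an interior vertex of the chosen path, but in both cases the contradiction is immediate. (Multiple edges and non-unit resistances cause no difficulty, since all the relevant conductances remain positive and the averaging identity is unchanged.)
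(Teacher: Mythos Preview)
Your proposal is correct and follows essentially the same approach as the paper: both are discrete maximum-principle arguments showing that the extremal potential must occur at a terminal. The paper's phrasing is slightly more compact---rather than propagating along a path, it picks a vertex $w$ in the maximum-potential set that has a neighbour of strictly lower potential (such a $w$ exists by connectedness unless all potentials are equal) and observes directly that Kirchhoff's Current Law fails at $w$, forcing $w\in\{x,y\}$---but the underlying idea is identical to yours.
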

\begin{proof}
  Consider the set of vertices of maximum potential. One of them, $w$
  say, must be joined to a vertex of lower potential (or all
  potentials are equal and we are done). By the Potential Law $w$ only
  has current leaving in the network (and does have some current leaving):
  i.e., the current law is not satisfied so $w$ must be one of $x$ and
  $y$. Since the same holds for the minimum potential, the result
  follows.
\end{proof}

On some occasions it will be convenient to allow more than one vertex
where current enters or leaves the networks. More precisely, we may
consider flows where the Potential Law is satisfied everywhere and the
Current Law is satisfied except at some set $S$ of vertices. We call
$S$ the \emph{source-sinks} (vertices where current enters the network are
sources, and the vertices where current leaves are sinks). The first
result using this is that we can add or \emph{superpose} current flows.

\begin{theorem}
  Suppose that $G$ is a network and that $I$ and $J$ are flows with
  source-sinks $S$ and $T$ respectively. Then $I+J$ is a current flow
  with source-sinks a subset of $S\cup T$.
\end{theorem}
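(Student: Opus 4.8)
The plan is to exploit the fact that both of Kirchhoff's laws are \emph{linear} conditions on a current assignment, so that they are automatically preserved when two flows are added coordinatewise. Writing $(I+J)_{uw}=I_{uw}+J_{uw}$ for each oriented edge $uw$ (which respects the antisymmetry $I_{uw}=-I_{wu}$), I would simply check the two laws in turn.

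First I would treat the Current Law. At a vertex $u$ this is the equation $\sum_{w\in\Gamma(u)}I_{uw}=0$, which is linear in the current assignment. For any $u\notin S\cup T$ we have $u\notin S$, so $I$ satisfies the Current Law at $u$, and $u\notin T$, so $J$ satisfies it at $u$; adding the two equations gives $\sum_{w\in\Gamma(u)}(I+J)_{uw}=0$. Hence $I+J$ obeys the Current Law at every vertex outside $S\cup T$, which is exactly the assertion that its source-sinks form a subset of $S\cup T$. (It can be a proper subset: a source of $I$ and a sink of $J$ at the same vertex may cancel, which is why the statement says ``subset'' rather than ``equals''.)

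Next I would handle the Potential Law, for which the cleanest route is the equivalent Ohm's Law formulation. Since $I$ and $J$ each satisfy the Potential Law everywhere, there are potentials $(V_u)$ and $(W_u)$ with $I_{uw}=V_u-V_w$ and $J_{uw}=W_u-W_w$ for every oriented edge $uw$. Then $(I+J)_{uw}=(V_u+W_u)-(V_w+W_w)$, so the potentials $(V_u+W_u)$ witness the Potential Law for $I+J$; equivalently, the sum of the currents of $I+J$ around any oriented cycle is the sum of the (vanishing) cycle sums for $I$ and for $J$, hence zero.

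Combining the two steps, $I+J$ satisfies the Potential Law everywhere and the Current Law off $S\cup T$, so it is a current flow with source-sinks contained in $S\cup T$, as required. I do not expect a genuine obstacle here: the whole content is the linearity of Kirchhoff's laws, and the only point needing a moment's care is the source-sink bookkeeping, namely the observation that any vertex outside $S\cup T$ is automatically a non-source-sink for both $I$ and $J$.
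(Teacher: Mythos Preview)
Your proposal is correct and is exactly the approach the paper takes: the paper's proof is the single sentence that both the Current Law and the Potential Law are linear, and you have simply spelled this out in detail. The only extra content in the paper is the remark that uniqueness of the current flow follows, since the difference of two current flows with the same source-sinks would have no source-sinks and hence be zero.
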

This follows immediately from the fact that both the Current Law and
the Potential Law are linear.  This also shows that the current flow
is unique (the difference of two flows would have no source-sinks so
must be the zero flow).

An easy consequence of this is the triangle inequality for
resistances.
\begin{lemma}\label{l:triangle-inequality}
  Suppose that $G$ is a network and that $x,y,z$ are vertices in
  $G$. Then $R_{xz}\le R_{xy}+R_{yz}$.
\end{lemma}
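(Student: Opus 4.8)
The plan is to build the unit current flow from $x$ to $z$ out of the flows realising $R_{xy}$ and $R_{yz}$, using the superposition principle just established, and then to control the relevant potential drop using the maximum/minimum potential lemma proved immediately above.

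First I would let $I$ be the unit current flow from $x$ to $y$, with associated potentials $(V^I_w)_{w\in V(G)}$ normalised so that $V^I_x-V^I_y=R_{xy}$, and let $J$ be the unit current flow from $y$ to $z$, with potentials $(V^J_w)$ satisfying $V^J_y-V^J_z=R_{yz}$. The flow $I$ has source-sinks $\{x,y\}$ (unit current leaving at $y$) and $J$ has source-sinks $\{y,z\}$ (unit current entering at $y$). By the superposition theorem, $I+J$ is a current flow whose source-sinks lie in $\{x,y,z\}$; at $y$ the two unit contributions cancel, so $I+J$ is in fact a \emph{unit} current flow from $x$ to $z$. By uniqueness of the current flow this is \emph{the} unit flow realising $R_{xz}$, and its potentials are $V^I_w+V^J_w$ (up to an additive constant). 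Hence
\[
R_{xz}=(V^I_x+V^J_x)-(V^I_z+V^J_z)=(V^I_x-V^I_z)+(V^J_x-V^J_z).
\]

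The only remaining point, which is where the preceding lemma does the work, is to bound each bracket. Applied to the flow $I$ from $x$ to $y$, that lemma says every potential lies between $V^I_y$ and $V^I_x$, so $V^I_z\ge V^I_y$ and therefore $V^I_x-V^I_z\le V^I_x-V^I_y=R_{xy}$. Applied to the flow $J$ from $y$ to $z$, it gives $V^J_x\le V^J_y$, whence $V^J_x-V^J_z\le V^J_y-V^J_z=R_{yz}$. Substituting these two inequalities into the displayed identity yields $R_{xz}\le R_{xy}+R_{yz}$, as required.

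There is no real obstacle here; the whole content is the recognition that superposing the two optimal flows produces exactly the optimal flow for the pair $(x,z)$, and that the ``middle'' vertex $y$ carries the extreme potential in each of the two constituent flows, so the third vertex's potential sits on the correct side of it. The step I would be most careful about in writing it out is the bookkeeping of signs and the normalisation of the additive constant in the potentials, since the inequality relies on reading off $V^I_x-V^I_z$ and $V^J_x-V^J_z$ with the right orientation.
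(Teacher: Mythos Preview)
Your proof is correct and is essentially the same as the paper's: both superpose the unit current flows $x\to y$ and $y\to z$ to obtain the unit current flow $x\to z$, and both bound the resulting potential drop via the max/min potential lemma. The only difference is cosmetic---you spell out explicitly the two applications of that lemma (namely $V^I_z\ge V^I_y$ and $V^J_x\le V^J_y$), whereas the paper compresses them into a single displayed inequality $V_x+W_x-V_z-W_z\le V_x-V_y+W_y-W_z$ without comment.
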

\begin{proof}
  Take a current flow $I$ of size 1 with source $x$ and sink $y$, and
  a current flow $J$ of size 1 with source $y$ and sink $z$, and let
  $V$ and $W$ be the vectors of potentials associated with the flows
  $I$ and $J$ respectively.   Then $I+J$ is a current flow of size 1
  with source $x$ and sink $z$, with associated potentials $V+W$. Thus
  \[
  R_{xz}=V_x+W_x-V_z-W_z\le V_x-V_y+W_y-W_z=R_{xy}+R_{yz}.\qedhere
  \]
\end{proof}

There are two very natural ways to combine two networks. Suppose that $G$
and $G'$ are networks on disjoint vertex sets with $x,y\in V(G)$ and
$z,w\in V(G')$. Then we can combine them `one after the other' by
considering $G\cup G'$ with $y$ and $z$ identified. We call this
joining in \emph{series}. If $H$ is the resulting network then
$R_{xw}(H)=R_{xy}(G)+R_{zw}(G')$.

The second possibility is to take $G\cup G'$ and identify $x$ and $z$
and identify $y$ and $w$. We call this joining in
\emph{parallel}. Writing $H$ for the resulting network this time the
conductances add: $C_{xy}(H)=C_{xy}(G)+C_{zw}(G')$. We remark that
this could be rephrased in a rather more complicated formula for
resistances, but that we find calculations easier in the form we have
stated.

There are several different ways of describing the unique current
flow. It is frequently the case that a result is easy to prove using
one description and very hard using another so we will switch between
them freely. The first one is very well known but, in most cases,
relatively hard to work with.

\begin{theorem}
  Suppose that $G$ is a network and $s$ and $t$ are vertices in
  $G$. For any oriented edge $xy$ in $G$ define $N(s,t,x,y)$ to be the
  number of spanning trees of $G$ that contain $xy$ (in that order) on
  their unique $st$ path and let $N$ be the total number of spanning
  trees in $G$.

  Then the flow in the edge $xy$ in the unique current flow with
  current of size 1 entering at $s$ and leaving at $t$ is given by
  \[\frac{N(s,t,x,y)-N(s,t,y,x)}{N}.
\]
\end{theorem}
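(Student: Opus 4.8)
The plan is to produce the candidate flow $J_{xy}=\bigl(N(s,t,x,y)-N(s,t,y,x)\bigr)/N$ directly and show that it \emph{is} the current flow of size $1$ from $s$ to $t$, by verifying Kirchhoff's Current Law and Potential Law. Once both laws hold with the correct boundary behaviour (unit current in at $s$, out at $t$), the uniqueness of the current flow established earlier in this appendix (via superposition of flows) identifies $J$ with the unique current flow, which is exactly the assertion. Throughout I use that in a spanning tree $T$ there is a unique $st$-path $P_T$, oriented from $s$ to $t$, and that the undirected edge $\{x,y\}$ lies on $P_T$ precisely when deleting it separates $s$ from $t$, with the oriented edge $x\to y$ on $P_T$ iff $s$ lies on the $x$-side and $t$ on the $y$-side.

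The Current Law is the easy half and can be checked spanning tree by spanning tree. Fix a vertex $u\neq s,t$ and examine the contribution of a single $T$ to $\sum_{w\in\Gamma(u)}\bigl(N(s,t,u,w)-N(s,t,w,u)\bigr)$. If $u\notin P_T$ it contributes $0$; if $u\in P_T$ then $P_T$ meets $u$ via exactly one incoming edge $w_1\to u$ and one outgoing edge $u\to w_2$, so $T$ contributes $+1$ (through $N(s,t,u,w_2)$) and $-1$ (through $-N(s,t,w_1,u)$), again netting $0$. Summing over all $T$ gives $\sum_{w\in\Gamma(u)}J_{uw}=0$. The same bookkeeping applied at $u=s$ (every $T$ has one outgoing path-edge and none incoming) gives net current $+1$, and at $u=t$ gives $-1$, as required.

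The Potential Law is the real obstacle: the tree-by-tree argument fails here, because around an oriented cycle an individual $T$ need \emph{not} contribute zero, so the cancellation is global. To handle this I would exhibit a potential function and invoke Ohm's Law. Define $U(v)=\tfrac1N\,\#\{\text{spanning }2\text{-forests }\{T_1,T_2\}: s,v\in T_1 \text{ and } t\in T_2\}$, and claim $U_x-U_y=J_{xy}$ for every edge $\{x,y\}$. The key combinatorial input is the bijection obtained by deleting the edge $\{x,y\}$: spanning trees in which $x\to y$ lies on $P_T$ correspond exactly to $2$-forests with $s,x\in T_1$ and $t,y\in T_2$, so $N(s,t,x,y)=\#\{s,x\in T_1,\ t,y\in T_2\}$ and symmetrically for $N(s,t,y,x)$. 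Partitioning the $2$-forests counted by $U_x$ and by $U_y$ according to the side on which the other of $x,y$ falls, the common term ($x,y$ both with $s$) cancels and one is left with
\[
N\,(U_x-U_y)=\#\{s,x\in T_1,\ t,y\in T_2\}-\#\{s,y\in T_1,\ t,x\in T_2\}=N(s,t,x,y)-N(s,t,y,x).
\]
Thus $J_{xy}=U_x-U_y$, which is Ohm's Law with unit resistances, so $J$ satisfies the Potential Law.

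Combining the two laws, $J$ is a current flow with source $s$, sink $t$, and total current $1$; by uniqueness it is the current flow, proving the formula. The decisive step is the Potential Law, and the whole weight of the argument rests on recasting the edge-on-path counts $N(s,t,x,y)$ as $2$-forest counts so that a genuine potential function appears; the Current Law and the final uniqueness appeal are routine. (An alternative route is to read the same identities off the all-minors Matrix--Tree theorem applied to the graph Laplacian, but the elementary forest bijection keeps the appendix self-contained.)
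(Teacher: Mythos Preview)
The paper does not actually prove this theorem: in Appendix~A it is merely stated as ``very well known'' and then the exposition moves on to the power formulation. So there is no proof in the paper to compare against.

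That said, your argument is a correct and standard proof. The Current Law check is exactly right. For the Potential Law, your key identity $N(s,t,x,y)=\#\{s,x\in T_1,\ t,y\in T_2\}$ via the delete-$\{x,y\}$ bijection is valid (note it uses that $\{x,y\}$ is an edge, which is all you need), and the subtraction $N(U_x-U_y)$ then cancels the common ``$s,x,y$ all in $T_1$'' term and leaves precisely $N(s,t,x,y)-N(s,t,y,x)$. The final appeal to uniqueness of the current flow is legitimate given what the appendix has already established. Your proof is self-contained and fits the spirit of the appendix; the paper simply chose to quote the result rather than prove it.
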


The description of the unique current flow that we will use most
frequently is based on the idea of the power dissipated by a flow.
\begin{defn}
  Suppose that $G$ is a network and $s,t$ are vertices of $G$. For
  any flow from $s$ to $t$ (i.e. an assignment of currents that
  satisfies Kirchhoff's Current Law) define the \emph{power of the flow} to be
  \[\sum_{e\in G}I_e^2,
  \]
  where $I_e$ is the current in the edge $e$. 
\end{defn}
We remark that, formally, we have defined the current in an oriented
edge but, since the sign is irrelevant in this formula, it is
unambiguous.
\begin{theorem}\label{t:energy}
  Suppose that $G$ is a network and $s,t$ are vertices of
  $G$. Then the unit current flow from $s$ to $t$ is the unit flow
  from $s$ to $t$ that minimises the power, and the power of this flow
  is the effective resistance from $s$ to $t$.
\end{theorem}

We will make frequent use of Theorem~\ref{t:energy}; in many cases we
give an upper bound for the effective resistance by constructing a
flow with low power. For example the following is an easy consequence
of the power formulation, but relatively hard to prove from the
spanning tree formulation.

\begin{lemma}[Monotonicity]
  Suppose that $G$ is a network and that $G'=G-\{e\}$ for some $e\in
  G$. Then the effective resistance between any two vertices in $G$ is
  at most the effective resistance between them in $G'$.

  Similarly, if $G''$ is formed by identifying two vertices of $G$
  then the effective resistance between any two vertices in $G''$ is
  at most the effective resistance between them in $G$.
\end{lemma}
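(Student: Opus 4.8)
The plan is to prove both halves using the variational characterisation of effective resistance from Theorem~\ref{t:energy}: the resistance $R_{xy}$ equals the minimum of the power $\sum_e I_e^2$ over all unit flows from $x$ to $y$. In each case I take the optimal flow in one of the two networks and exhibit it as a (not necessarily optimal) unit flow of no greater power in the other network; minimality of the optimal power then gives the inequality in the required direction.

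For the first statement, I would begin with the optimal unit flow from $x$ to $y$ in $G'=G-\{e\}$, whose power is exactly $R_{xy}(G')$. Since $e\notin G'$, this flow assigns no current to $e$, so setting $I_e=0$ extends it to a unit flow from $x$ to $y$ in $G$ still satisfying Kirchhoff's Current Law, and of the same power (the added edge contributes $0^2=0$). As $R_{xy}(G)$ is the minimum power over unit flows in $G$, we obtain $R_{xy}(G)\le R_{xy}(G')$.

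For the second statement, let $u$ and $v$ be the vertices of $G$ that are identified to a single vertex $w$ in $G''$, and take the optimal unit flow from $x$ to $y$ in $G$, of power $R_{xy}(G)$. I would transport it to $G''$ by leaving the current on every edge unchanged. Kirchhoff's Current Law is preserved at $w$ because the net current into $w$ equals the sum of the net currents into $u$ and into $v$ in $G$, and these vanish unless the vertex is $x$ or $y$; an edge $uv$, which becomes a self-loop at $w$, contributes zero to this balance and may simply be discarded, removing only a non-negative term from the power. The result is a unit flow from $x$ to $y$ in $G''$ of power at most $R_{xy}(G)$, so minimality in Theorem~\ref{t:energy} yields $R_{xy}(G'')\le R_{xy}(G)$.

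Neither half is deep, but the one point requiring care is the identification step: one must confirm that merging $u$ and $v$ does not break the Current Law at the new vertex, and must correctly handle an edge between $u$ and $v$ turning into a self-loop. Both are settled by the additivity of net current over $u$ and $v$ and the observation that discarding a shorted self-loop only decreases the power sum; the remainder is an immediate appeal to Theorem~\ref{t:energy}.
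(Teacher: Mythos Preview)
Your argument is correct and matches the paper's approach exactly: the paper proves only the first part, taking the optimal unit flow in $G'$, extending it to $G$ with zero current on $e$, and invoking Theorem~\ref{t:energy}. You additionally supply the second part (which the paper explicitly omits), using the same variational idea; your handling of the merged vertex and possible self-loop is fine.
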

We prove the first part as a simple example of this argument.
\begin{proof}
  Let $x,y\in V(G)$ and let $I$ be the unit current flow in $G-\{e\}$
  from $x$ to $y$, so the power of the flow $I$ is $R_{xy}(G-\{e\})$.
  This is a unit flow in $G$ (i.e., with current 0 in edge $e$), and
  the power it dissipates in $G$ is the same as in $G'$. Thus the
  power of the flow that minimises the power is at most this much:
  i.e., $R_{xy}(G)\le R_{xy}(G')$.
\end{proof}

Finally, there is a linear algebraic interpretation of effective resistance involving eigenvalues. We define the Laplacian matrix $L$ of the graph $G$ to be the $n\times n$ matrix with entries:
\[
L_{ij}=\left\{\begin{array}{cl}
-1 & \text{ if $i\not=j$ and $ij\in E(G)$}\\
0 & \text{ if $i\not=j$ and $ij\not\in E(G)$}\\
\deg(i) & \text{ if $i=j$}
\end{array}\right.
\]
Since the row sums of $L$ are $0$, the matrix $L$ has $0$ as an eigenvalue. Provided that $G$ is connected, all the other eigenvalues $\lambda_1,\dots,\lambda_{n-1}$ are positive and it can be shown that
the total resistance is:
\[
\sum_{x<y} R_{xy}=2 \sum_{i=1}^{n-1} \lambda_{i}^{-1}
\]
So minimising $A(G)$ corresponds to minimsing the sum of the reciprocals of the non-trivial Laplacian eigenvalues.

In fact, the individual effective resistances can be expressed in terms of entries of the so-called Moore-Penrose generalised inverse of $L$ but we will not need this. See \cite{KleinRandic} and \cite{MR3026766} for further details.

\section*{Appendix B: Proof of Theorem~\ref{t:qb-never-best}}
We prove the bound claimed at the end of the proof of
Theorem~\ref{t:qb-never-best}. This is elementary but tedious, and the
calculations given were done with the aid of a computer algebra
package.

Define the function $l(x)=\frac{x-1}{x(x-2)}$ and $\Delta
l(x)=l(x+1)-l(x)$ (which will be negative). Then, for any integer $t$,
the line through the points $(t,l(t))$ and $(t+1,l(t+1))$ has equation 
\[y=y(x)=l(t)+(x-t)\Delta l(t).\] This is the equation of our upper
bound construction (the convex combination of $t$-regular and
$(t+1)$-regular graphs) for $x$ between $t$ and $t+1$. We need to show
that, in this range, this function is less than the Queen-Bee lower
bound of $1/(x-3/2)$. In fact we show, that this function is less than
$1/(x-3/2)$ for all $x>3/2$. Since $1/(x-3/2)$ is continuous and
greater than $y(x)$ for $x$ near $3/2$ it suffices to show that the
equation $1/(x-3/2)=y(x)$ has no solutions. This equation rearranges to
\[\Delta l(t)(x-t)(x-3/2)+(x-3/2)l(t)-1 =0
\]
which expands to 
\[\Delta l(t)x^2+\left(l(t)-dl(t)(t+3/2)\right)x+\frac{3t\Delta
  l(t)}2-\frac{3l(t)}2-1=0.
\]
Finding the discriminant of the quadratic we get
\[
-\frac {8\,{t}^{5}-41\,{t}^{4}+66\,{t}^{3}-71\,{t}^{2}+38\,t-1}{ 4\left( t+1 \right) ^{2
} \left( t-1 \right) ^{2}{t}^{2} \left( t-2 \right) ^{2}}.
\]
Writing $t=4+s$ this becomes
\[
-{\frac {8\,{s}^{5}+119\,{s}^{4}+690\,{s}^{3}+1905\,{s}^{2}+2382\,s+935}{ 4\left( s+5
 \right) ^{2} \left( s+3 \right) ^{2} \left( s+4 \right) ^{2} \left( s+2 \right) ^{2}}}
\]
which is obviously negative for all $s\ge 0$, i.e., for all $t\ge4$.

\medskip

The next case is when $\a\in[3\frac12,4]$. We need to use the convex
combination of our construction for average degree $10/3$ and the four
regular construction. For all $x\le 4$ this lies below the line with
equation
\[y(x)=(x-10/3)\frac{3/8-0.528}{2/3}+0.528.
\]
As above we look for solutions to $y(x)=1/(x-3/2)$. This equation rearranges to
\[
- 0.2295x^{2}+ 1.63725x- 2.9395=0
\]
which has discriminant less than 0. Thus there are no roots, and this
case is complete.

Next we need to consider the comparison to the other lower bound for
Queen-Bee networks: namely $(5-\a)/3$. For $\a<10/3$ this is trivially
above the line through the star $(2,1)$, and our construction for
$10/3$ (the point $(10/3,0.528)$). 

Finally we have to compare the bound of $(5-x)/3$ and the convex
combination of our construction for $10/3$ and the regular graph for
$x\le3\frac12$. Writing $y(x)$ for  this bound we get
\[\frac{5-x}{3}-y(x)= 0.373666667 - 0.1038333333 x
\]
which is zero at approximately $3.59$, so positive before that. This completes the proof.
\end{document}